\newtheorem{theorem}{Theorem}[section]
\newtheorem{proposition}[theorem]{Proposition}
\newtheorem{conjecture}[theorem]{Conjecture}
\newtheorem{corollary}[theorem]{Corollary}
\newtheorem{lemma}[theorem]{Lemma}
\newtheorem{definition/proposition}[theorem]{Definition/Proposition}
\newtheorem{definitions}[theorem]{Definitions}
\theoremstyle{definition}
\numberwithin{equation}{section}
\begin{document}
\title[Differential-Dunkl Operators and Nonstandard Solutions to the Classical Yang-Baxter Equation]{Differential-Dunkl Operators and Nonstandard Solutions to the Classical Yang-Baxter Equation}
\author{Garrett Johnson}
\address{Department of Mathematics, The Catholic University of America, Washington, D.C. 20064}
\email{johnsongw@cua.edu}
\subjclass[2010]{16T20; 16S40}
\keywords{Cherednik algebras, R-matrices, Yang-Baxter equation}

\begin{abstract}

For every pair of positive coprime integers, $m$ and $n$, with $m<n$, there is an associated generalized Cremmer-Gervais $r$-matrix $r_{m,n}\in\mathfrak{sl}_n\wedge\mathfrak{sl}_n$ which provides a nonstandard quasitriangular solution to the classical Yang-Baxter equation. We give an interpretation of $r_{n-2,n}$ (for $n$ odd)  in terms of differential-Dunkl operators related to the polynomial representation of dihedral-type rational Cherednik algebras. Finally, we use this interpretation to partially answer a conjecture of Gerstenhaber and Giaquinto concerning boundary solutions to the classical Yang-Baxter equation.
\end{abstract}

\maketitle

\section{Introduction} 

In the early 1980's, Belavin and Drinfeld \cite{BD} classified the quasitriangular solutions to the classical Yang-Baxter equation. The $r$-matrices $r_{m,n}\in\mathfrak{sl}_n\wedge\mathfrak{sl}_n$ of Cremmer and Gervais (see e.g. \cite{CG,EH,GG}), which are indexed by pairs of positive coprime integers $m$ and $n$ with $m<n$,  provide an interesting family of such solutions. 

In this paper, we give an interpretation of the Cremmer-Gervais $r$-matrices $r_{n-2,n}$ (for $n$ odd) in terms of differential-Dunkl operators related to the polynomial representation of certain dihedral-type rational Cherednik algebras. The rational Cherednik algebras are a family of algebras defined by Etingof and Ginzburg \cite{EG} in the context of symplectic reflection algebras. They are doubly degenerate versions of double affine Hecke algebras, which were first introduced by Cherednik \cite{Ch} as a key tool to solving the Macdonald constant term conjectures, and have since been of importance in noncommutative algebras and quantum groups. For a more detailed exposition of these algebras, we refer the reader to \cite{EM}.

The organization of this paper is as follows. Section \ref{CYB section} covers the preliminaries on classical $r$-matrices, the Belavin-Drinfeld classification theorem, and the conjecture of Gerstenhaber and Giaquinto \cite[Conj. 5.7]{GG} concerning Cremmer-Gervais $r$-matrices and boundary solutions to the classical Yang-Baxter equation.   In Section \ref{differential-Dunkl operators} we introduce an algebra of differential-Dunkl operators related to the polynomial representation of dihedral-type rational Cherednik algebras and give an interpretation of certain Cremmer-Gervais $r$-matrices in terms of these operators. In Section \ref{GG Conj Section}, we prove the Gerstenhaber-Giaquinto conjecture for the case when $m=n-2$ (and $n$ is necessarily odd). Section \ref{33} is dedicated to a proof of Thm. \ref{the action}, which gives an explicit formula for the Cremmer-Gervais $r$-matrices. \\

\noindent{\bf Acknowledgments.} This paper is based on some work which began as part of my Ph.D. thesis \cite{JThesis} at the University of California at Santa Barbara. I would like to thank my thesis advisor, Milen Yakimov, for his guidance. I would also like to thank Anthony Giaquinto for communicating his proof of part 1 of Thm. \ref{GGConj} to me.

\section{Classical $r$-matrices}\label{CYB section}

\subsection{Preliminaries and definitions} 
Throughout this paper, $m$ and $n$ denote fixed positive integers with $m<n$. Later, we make the further assumption that $m$ and $n$ are coprime. Let $k$ be an algebraically closed field of characteristic zero.  We mention here, however, that most of the statements and results of this paper hold over a field with $2n\neq 0$ and containing an $m$-th root of unity. All tensor products are taken over $k$.  For a $k$-module $V$, and vectors $a,b\in V$, we use the notation $a\wedge b$ as shorthand for $\frac{1}{2}\left(a\otimes b-b\otimes a\right)$ and let $V\wedge V$ be the vector subspace of $V\otimes V$ spanned by the set $\{a\wedge b: a,b\in V\}$.

Let $\mathfrak{g}$ be a simple Lie algebra over $k$ and let $U(\mathfrak{g})$ denote its universal enveloping algebra. For $r=\sum_ia_i\otimes b_i\in\mathfrak{g}\otimes\mathfrak{g}$, define the following elements of $U(\mathfrak{g})\otimes U(\mathfrak{g})\otimes U(\mathfrak{g})$: $r_{12}=\sum_i a_i\otimes b_i\otimes 1$, $r_{23}=\sum_i1\otimes a_i\otimes b_i$, and $r_{13}=\sum_ia_i\otimes 1\otimes b_i$. We call $r\in\mathfrak{g}\wedge\mathfrak{g}$ an \emph{$r$-matrix} if 
\begin{equation}
\langle\langle r,r\rangle\rangle := [r_{12},r_{13}]+[r_{12},r_{23}]+[r_{13},r_{23}]
\end{equation}
is $\mathfrak{g}$-invariant. An $r$-matrix is called \emph{triangular} if $\langle\langle r,r\rangle\rangle =0$ and \emph{quasitriangular} if $\langle\langle r,r\rangle\rangle\neq0$. Although we don't discuss Poisson-Lie groups or Lie bialgebras in this paper, we mention here that $r$-matrices arise in the context of determining Poisson structures on Lie groups which are compatible with the group multiplication. For further reference, see e.g. \cite[Chapter 1]{CP}.

In this paper we turn our attention to the case when  $\mathfrak{g}=\mathfrak{sl}_n$, the Lie algebra of traceless $n\times n$ matrices. Let $e_{ij}$ denote the elementary $n\times n$ matrix having $1$ in the $(i,j)$-entry and zeros elsewhere. We identify $e_{ij}$ with the $k$-linear map on $k^n$ defined by $e_{ij}e_\ell=\delta_{j\ell}e_i$, where $e_1,...,e_n$ denote the standard basis vectors of $k^n$. Under this identification, we view an element $r\in{\mathfrak sl}_n\wedge\mathfrak{sl}_n$ as a linear operator on $k^n\otimes k^n$ that satisfies $PrP=-r$, where $P$ is the linear map that interchanges tensor components: $u\otimes v\mapsto v\otimes u$. It turns out that the $\mathfrak{sl}_n$-invariants in $U(\mathfrak{sl}_n)\otimes U(\mathfrak{sl}_n)\otimes U(\mathfrak{sl}_n)$ correspond to scalar multiples of the linear map given by $u\otimes v\otimes w\mapsto w\otimes u\otimes v - v\otimes w\otimes u$ for all $u,v,w\in k^n$. This prompts the following definitions.

\begin{definitions} Let $V$ be a $k$-module, and let $\tau:V\otimes V\to V\otimes V$ be a linear map satisfying $P\tau P=-\tau$.
\begin{enumerate}[(1)]
\item For an arbitrary (but fixed) scalar $\lambda\in k$, define the classical Yang-Baxter map $CYB_\lambda :\text{End}_k(V^{\otimes 2})\to\text{End}_k(V^{\otimes 3})$ as follows,
\begin{equation}\label{CYB equation}
CYB_\lambda(\tau):=[\tau_{12},\tau_{13}]+[\tau_{12},\tau_{23}]+[\tau_{13},\tau_{23}]-\lambda Z,
\end{equation} 
where $Z$ is the linear map defined by $u\otimes v\otimes w\mapsto w\otimes u\otimes v - v\otimes w\otimes u$ for all $u,v,w\in V$.
\item We call $\tau$ an $r$-matrix if there exists $\lambda\in k$ so that $CYB_\lambda(\tau)=0$.  If $CYB_0(\tau)=0$ then $\tau$ is called a triangular $r$-matrix.  An $r$-matrix with $CYB_0(\tau)\neq 0$ is called quasitriangular.
\end{enumerate}
\end{definitions}

\subsection{Quasitriangular $r$-matrices, Belavin-Drinfel'd Classification}\label{Quasitriangular $r$-matrices section} 
A classification of quasitriangular $r$-matrices for the case when $\mathfrak{g}$ is a complex simple Lie algebra was completed in the early 1980's by Belavin and Drinfel'd \cite{BD}.

In this section, we assume $\mathfrak{g}$ is complex and simple. Let $\mathfrak{h}\subseteq\mathfrak{g}$ denote a  Cartan subalgebra, and let $\left<\,\,,\,\,\right>$ be a nondegenerate symmetric ad-invariant bilinear form on $\mathfrak{g}$. The nondegeneracy of the bilinear form induces an inner product on the dual vector space ${\mathfrak g}^*$, also denoted with $\left<\,\,,\,\,\right>$. For a root system $\Phi=\Phi_+\sqcup\Phi_-$ of $(\mathfrak{g},\mathfrak{h})$, let $\Pi=\{\alpha_1,...,\alpha_\ell\}\subseteq\Phi_+$ denote a basis of positive roots of $\mathfrak{g}$, and let $\mathfrak{g}\cong\mathfrak{n}_-\oplus\mathfrak{h}\oplus\mathfrak{n}_+$ denote the corresponding triangular decomposition. For each positive root $\alpha\in\Phi_+$, select root vectors $e_\alpha\in{\mathfrak g}_\alpha, e_{-\alpha}\in{\mathfrak g}_{-\alpha}$  so that $\left< e_\alpha ,e_{-\alpha}\right> =1$. For each root $\beta\in\Phi$, let $h_{\beta}\in\mathfrak{h}$ denote the unique vector satisfying $\left< h_\beta ,H\right> = \alpha (H)$ for all $H\in\mathfrak{h}$. A \emph{BD-triple} $({\mathcal S}_0,{\mathcal S}_1,\zeta)$ is a bijection $\zeta : {\mathcal S}_0\to {\mathcal S}_1$ between subsets ${\mathcal S}_0\subseteq\Pi$ and ${\mathcal S}_1\subseteq \Pi$ and satisfies 
\begin{enumerate}[(1)]
\item the orthogonality condition: $\left<\zeta (\alpha_i ),\zeta (\alpha_j)\right>=\left<\alpha_i,\alpha_j\right>$ for every $\alpha_i,\alpha_j\in {\mathcal S}_0$, and
\item the nilpotency condition: for every $\alpha\in {\mathcal S}_0$, there exists $N\in\mathbb{N}$ so that $\zeta^N(\alpha)\in {\mathcal S}_1\backslash {\mathcal S}_0$.
\end{enumerate}
We view BD-triples graphically via two copies of the Dynkin diagram of $\mathfrak{g}$ with arrows representing the bijection $\zeta$ drawn between the nodes (see, e.g., Figures \ref{CG BD-triple T_1n} and \ref{CG BD-triple T_n-2,n}).  

\begin{figure}[h]\centering\caption{The Cremmer-Gervais BD-triple ${\mathcal T}_{1,n}$}\begin{picture}(300,50)\label{CG BD-triple T_1n}\put(18,35){\xymatrix @R=10pt @C=10pt{&*={\bullet}\ar[ddrr]\ar@{-}[rrrrrr]&&*={\bullet}\ar[ddrr]&&*={\bullet}\ar[ddrr]&&*={\bullet}\ar@{-}[r]&\cdots&\!\!\!\!\!\!\!\cdots&*={\bullet}\ar@{-}[l]\ar@{-}[rrrrrr]\ar[ddrr]&&*={\bullet}\ar[ddrr]&&*={\bullet}\ar[ddrr]&&*={\bullet}\\&&&&&&&&\cdots\ar@{-}[ul]&*={\!\!\!\!\!\!\!\cdots}\ar[dr]\\&*={\bullet}\ar@{-}[rrrrrr]&&*={\bullet}&&*={\bullet}&&*={\bullet}\ar@{-}[r]&\cdots&\!\!\!\!\!\!\!\cdots&*={\bullet}\ar@{-}[l]\ar@{-}[rrrrrr]&&*={\bullet}&&*={\bullet}&&*={\bullet}\\}}\end{picture}\end{figure}

For a BD-triple ${\mathcal T}=({\mathcal S}_0,{\mathcal S}_1,\zeta)$, we put
\begin{equation}
\beta_{\mathcal T}:=\Big\{\beta\in\mathfrak{h}\wedge\mathfrak{h}:(1\otimes (\zeta(\alpha)-\alpha))\beta=\frac{1}{2}(h_{\zeta(\alpha)}+h_{\alpha})\text{ for all }\alpha\in {\mathcal S}_0\Big\}.
\end{equation} 
It turns out that $\beta_{\mathcal T}$ is an affine algebraic subvariety of $\mathfrak{h}\wedge\mathfrak{h}$ of dimension $d(d-1)/2$ where $d=\#(\Pi-{\mathcal S}_0)$ (see, e.g., \cite[Section 3.1]{CP}). For $i\in\{0,1\}$, put $\widehat{{\mathcal S}}_i:=\mathbb{Z}{\mathcal S}_i\cap\Phi_+$, and let $\widehat{\zeta}$ denote the $\mathbb{Z}$-linear extension of $\zeta$ to $\widehat{{\mathcal S}}_0$. The BD-triple induces a partial ordering $\preceq$ on the positive roots as follows: for $\rho,\mu\in\Phi_+$, $\rho\prec\mu$ if and only if there exists $N\in\mathbb{N}$ so that $\widehat{\zeta}^N(\rho)=\mu$.

It follows from the definition of quasitriangularity that if $r$ is quasitriangular, then $\lambda r$ is also quasitriangular for every $\lambda\in\mathbb{C}^\times$. Since the defining equations for an $r$-matrix are quadratic, we can view the set of all quasitriangular $r$-matrices as a projective subvariety ${\mathcal M}$ of $\mathbb{P}(\mathfrak{g}\wedge\mathfrak{g})$. We also observe that the group $\text{Aut}_\mathfrak{g}(\mathfrak{g})\times\mathbb{C}^\times$ acts on ${\mathcal M}$ by $(g,\lambda).r:=\lambda (g\otimes g)r$. We call $r$ and $r^\prime$ \emph{equivalent} $r$-matrices if there exists a nonzero scalar $\lambda\in\mathbb{C}^\times$ and an \emph{inner} automorphism $g\in\text{Int}_{\mathfrak g}(\mathfrak{g})$ so that $(g,\lambda).r=r^\prime$. Now we are ready to state the Belavin-Drinfel'd classification theorem.

\begin{theorem} (Belavin-Drinfel'd Classification Theorem) 
For a $BD$-triple ${\mathcal T}$ and $\beta\in\beta_{\mathcal T}$, 
\begin{equation}\label{BD Classification}
r_{{\mathcal T},\beta}=\alpha_{\mathcal T}+\beta+\gamma\in{\mathfrak g}\wedge{\mathfrak g}
\end{equation}
is a quasitriangular $r$-matrix on $\mathfrak{g}$, where
\begin{equation}
\alpha_{\mathcal T}=2\sum_{\rho\prec\mu}e_\rho\wedge e_{-\mu}
\end{equation}
and 
\begin{equation}
\gamma = \sum_{\mu\in\Phi_+} e_\mu\wedge e_{-\mu}.  
\end{equation}
Conversely, any quasitriangular $r$-matrix $r\in\mathfrak{g}\wedge\mathfrak{g}$ is equivalent to a unique $r$-matrix of the above form.
\end{theorem}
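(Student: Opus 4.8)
The statement is the classical Belavin--Drinfel'd theorem, and the plan is to prove its two assertions separately: (I) each $r_{\mathcal T,\beta}$ is a quasitriangular $r$-matrix, and (II) every quasitriangular $r$-matrix is equivalent to exactly one tensor of this form. Throughout it is convenient to pass from a skew tensor $a\in\mathfrak g\wedge\mathfrak g$ to the non-skew tensor $\widetilde a:=a+\tfrac12\Omega$, where $\Omega=\sum_j x_j\otimes x^j\in(\mathfrak g\otimes\mathfrak g)^{\mathfrak g}$ is the Casimir of the form $\langle\,,\rangle$; since $\Omega$ is symmetric, $\widetilde a+\widetilde a^{\,21}=\Omega$. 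For skew $a$ the tensor $\langle\langle a,a\rangle\rangle$ is totally antisymmetric, and for $\mathfrak g$ simple the space of invariant totally antisymmetric $3$-tensors is one-dimensional and, under the identification of the excerpt, spanned by $Z$; it is standard that $\langle\langle a,a\rangle\rangle$ is $\mathfrak g$-invariant and nonzero precisely when, after rescaling $a$ by a scalar, $\widetilde a$ satisfies the ordinary CYBE $[\widetilde a_{12},\widetilde a_{13}]+[\widetilde a_{12},\widetilde a_{23}]+[\widetilde a_{13},\widetilde a_{23}]=0$. As rescaling is allowed in our notion of equivalence, in both directions it suffices to work with CYBE for $\widetilde r$ together with the normalization $\widetilde r+\widetilde r^{\,21}=\Omega$.

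\medskip
\noindent\textbf{Direction (I).}
Rewriting the formulas of the excerpt gives $\widetilde r_{\mathcal T,\beta}=\widetilde r_0+\sum_{\mu\in\Phi_+}e_\mu\otimes e_{-\mu}+\sum_{\rho\prec\mu}(e_\rho\otimes e_{-\mu}-e_{-\mu}\otimes e_\rho)$, where $\widetilde r_0=\beta+\tfrac12\Omega_{\mathfrak h}\in\mathfrak h\otimes\mathfrak h$ has symmetric part $\tfrac12\Omega_{\mathfrak h}$ and skew part $\beta$. I would substitute this into the left side of CYBE and sort the output by $\mathfrak h\times\mathfrak h\times\mathfrak h$-weight. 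The part not involving the ``linkage'' term $\sum_{\rho\prec\mu}(e_\rho\otimes e_{-\mu}-e_{-\mu}\otimes e_\rho)$ vanishes --- this is the empty-$BD$-triple case, i.e.\ the standard $r$-matrix with an arbitrary skew $\mathfrak h\wedge\mathfrak h$-correction. The remaining contributions split into (a) terms quadratic in the linkage part, and (b) cross terms between the linkage part and the rest. For (a), the orthogonality condition $\langle\zeta(\alpha_i),\zeta(\alpha_j)\rangle=\langle\alpha_i,\alpha_j\rangle$ forces the structure constants governing $[e_\rho,e_{\rho'}]$ and $[e_{-\mu},e_{-\mu'}]$ along each $\widehat\zeta$-chain to match, so these contributions telescope, while the nilpotency condition guarantees that each chain $\rho\prec\widehat\zeta(\rho)\prec\cdots$ is finite and hence has no surviving boundary term. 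For (b), the only term not forced to vanish by weight considerations is Cartan-valued, sitting in the slots where the linkage meets the diagonal, and the defining relations of $\beta_{\mathcal T}$, namely $(1\otimes(\zeta(\alpha)-\alpha))\beta=\tfrac12(h_{\zeta(\alpha)}+h_\alpha)$ for $\alpha\in\mathcal S_0$, are precisely the linear conditions that kill it. After these cancellations the left side of CYBE vanishes, and $\widetilde r_{\mathcal T,\beta}+\widetilde r_{\mathcal T,\beta}^{\,21}=\Omega\neq0$, so $r_{\mathcal T,\beta}$ is a quasitriangular $r$-matrix.

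\medskip
\noindent\textbf{Direction (II).}
Let $r$ be a quasitriangular $r$-matrix; by the above, after rescaling we may assume $\widetilde r=r+\tfrac12\Omega$ solves CYBE with $\widetilde r+\widetilde r^{\,21}=\Omega$. The structural core is fourfold. (i) Since the symmetric part of $\widetilde r$ is the nondegenerate invariant $\Omega$, it determines a maximal toral subalgebra, and one shows that after conjugating by a suitable inner automorphism $\widetilde r$ takes the shape $\widetilde r_0+\sum_{\mu\in\Phi_+}c_\mu\,e_\mu\otimes e_{-\mu}+(\text{terms }e_\rho\otimes e_{-\mu}\text{ with }\rho\neq\mu)$ relative to some triangular decomposition $\mathfrak g=\mathfrak n_-\oplus\mathfrak h\oplus\mathfrak n_+$, with $\widetilde r_0\in\mathfrak h\otimes\mathfrak h$ and $\widetilde r_0+\widetilde r_0^{\,21}=\Omega_{\mathfrak h}$. (ii) Reading CYBE at the extreme weights forces $c_\mu=1$ for all $\mu\in\Phi_+$. (iii) Declaring $\rho\to\mu$ (for $\rho,\mu\in\Phi_+$, $\rho\neq\mu$) whenever $e_\rho\otimes e_{-\mu}$ occurs in $\widetilde r$ with nonzero coefficient, the remaining CYBE constraints force $\to$ to be a strict partial order generated by an injective partial self-map $\widehat\zeta$ of $\Phi_+$, force $\widehat\zeta$ to be $\mathbb Z$-linear --- hence determined by its restriction $\zeta$ to simple roots, with domain $\mathcal S_0\subseteq\Pi$ and image $\mathcal S_1\subseteq\Pi$ --- force $\zeta$ to be an isometry for $\langle\,,\rangle$ (by matching the quadratic structure-constant terms of the double bracket), and force every $\zeta$-chain to terminate in $\mathcal S_1\setminus\mathcal S_0$ (finite-dimensionality of $\mathfrak g$); thus $\mathcal T:=(\mathcal S_0,\mathcal S_1,\zeta)$ is a $BD$-triple, $\to$ coincides with $\prec$, and the root part of $\widetilde r$ is that of $\alpha_{\mathcal T}+\gamma$. (iv) The residual mixed weight-zero equations of CYBE then assert exactly that $\beta:=\widetilde r_0-\tfrac12\Omega_{\mathfrak h}$ lies in $\beta_{\mathcal T}$. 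Hence $\widetilde r=\widetilde r_{\mathcal T,\beta}$, i.e.\ $r$ is equivalent to $r_{\mathcal T,\beta}$.

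\medskip
\noindent\textbf{Uniqueness, and the main obstacle.}
For uniqueness, suppose $(g,\lambda).r_{\mathcal T,\beta}=r_{\mathcal T',\beta'}$ with $g\in\operatorname{Int}_{\mathfrak g}(\mathfrak g)$. Since $\langle\langle\,\cdot\,,\,\cdot\,\rangle\rangle$ is natural in $\mathfrak g$ and every automorphism fixes $Z$, comparing $\langle\langle r_{\mathcal T,\beta},r_{\mathcal T,\beta}\rangle\rangle$ forces $\lambda^2=1$, and then $g\otimes g$ must carry the data intrinsic to $\widetilde r_{\mathcal T,\beta}$ --- a Borel and Cartan, the set of root-vector pairs that occur, and the partial order they generate --- onto the corresponding data of $\widetilde r_{\mathcal T',\beta'}$. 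Since Borels and Cartans are all conjugate and the combinatorial skeleton is recovered from $\widetilde r$ as in Direction (II), this forces $\mathcal T=\mathcal T'$, after which the Cartan part is uniquely determined and so $\beta=\beta'$ (the residual possibility $\lambda=-1$ is either ruled out or absorbed into $g$). The step I expect to be the genuine obstacle is (iii): extracting the $BD$-triple from the CYBE, i.e.\ proving that the linkage relation is not merely some finite partial map on $\Phi_+$ but is forced to be the $\mathbb Z$-linear extension of an \emph{isometric} bijection between subsets of \emph{simple} roots that in addition satisfies the nilpotency condition. Establishing the orthogonality condition demands a delicate comparison of the quadratic structure-constant terms in $\langle\langle\widetilde r,\widetilde r\rangle\rangle$, and the reduction in (i) to a standard Borel by an inner automorphism --- while keeping $\widetilde r$ in the normal form above --- is the other place where real work, rather than weight bookkeeping, is required.
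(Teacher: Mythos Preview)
The paper does not prove this theorem. It is stated as background (the classical result of Belavin and Drinfel'd \cite{BD}) and is simply cited, with no argument given in the paper itself. So there is no ``paper's own proof'' to compare your proposal against.

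That said, your outline is a reasonable sketch of the standard Belavin--Drinfel'd argument: pass to the non-skew tensor $\widetilde r=r+\tfrac12\Omega$, verify CYBE for the prescribed $\widetilde r_{\mathcal T,\beta}$ by weight-grading and telescoping along $\widehat\zeta$-chains, and in the converse direction normalize by an inner automorphism to a triangular decomposition, then read off the $BD$-triple and the Cartan datum $\beta$ from the weight components of CYBE. You correctly identify the two genuinely nontrivial steps --- the normalization (i) and the extraction of the isometric, nilpotent $\zeta$ in (iii) --- and you are right that these are where the real content lies. As written, however, your proposal is a plan rather than a proof: the cancellations in Direction~(I) are asserted (``telescope'', ``precisely the linear conditions that kill it'') rather than carried out, and in Direction~(II) step~(i) (producing a compatible Borel/Cartan from $\widetilde r$ by an inner automorphism) and step~(iii) (deriving orthogonality and nilpotency of $\zeta$ from the quadratic CYBE constraints) are only named, not executed. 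Your uniqueness paragraph also leaves loose the case $\lambda=-1$ and the possibility that an inner automorphism permutes Borels nontrivially; in the actual argument one has to be a bit more careful there. If you want a self-contained proof rather than a roadmap, those are the places that need to be filled in; otherwise, for the purposes of this paper, citing \cite{BD} (or the exposition in \cite[Chapter~3]{CP}) is what is done.
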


\subsection{Generalized Cremmer-Gervais $r$-matrices}\label{Generalized CG $r$-matrices section}

The generalized Cremmer-Gervais $r$-matrices provide some interesting examples of quasitriangular $r$-matrices for the case when $\mathfrak{g}=\mathfrak{sl}_n$.

The linear span of $\{e_{ii}-e_{i+1,i+1}: 1\leq i \leq n-1\}$ is a Cartan subalgebra, which we denote as $\mathfrak{h}$. The bilinear form we use is the trace form $\left<X,Y\right>:=Tr(XY)$.  The positive root vectors are $\{ e_{ij}: i<j\}$ and the negative root vectors are $\{e_{ij}: i>j\}$. Let $\alpha_1,...,\alpha_{n-1}$ denote the simple roots, where $\alpha_i$ is the linear functional on $\mathfrak{h}$ defined by the rule $\alpha_i(h) = \langle e_{ii}-e_{i+1,i+1},h\rangle$ for all $h\in\mathfrak{h}$.

The generalized Cremmer-Gervais $r$-matrices are the quasi-triangular $r$-matrices associated to the maximal BD-triples with $\#|\Pi-{\mathcal S}_0|=1$. In \cite{GG}, Gerstenhaber and Giaquinto show there are exactly $\phi(n)$ BD-triples of this type, where $\phi$ is the Euler-totient function. Additionally,  for $m$ coprime to $n$ in $\{1,...,n-1\}$ they show the corresponding BD-triple ${\mathcal T}_{m,n}$ is given by 
\begin{equation}\label{T_{m,n}}
{\mathcal T}_{m,n}:=(\Pi-\{\alpha_{n-m}\},\Pi-\{\alpha_m\},\zeta:\alpha_s\mapsto\alpha_{s+m(\text{mod }n)}).
\end{equation}
 
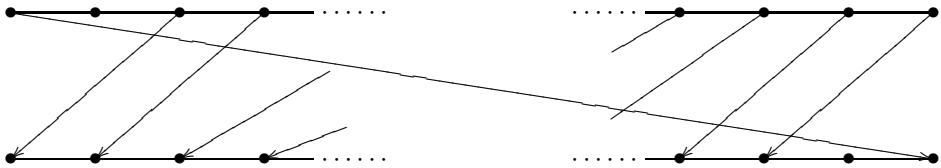
\begin{figure}[h]
\centering\caption{The Cremmer-Gervais BD-triple ${\mathcal T}_{n-2,n}$}\begin{picture}(300,75)\label{CG BD-triple T_n-2,n}\put(-48,60){\xymatrix @R=5pt @C=10pt{&*={\bullet}\ar[ddddrrrrrrrrrrrrrrrrrr]\ar@{-}[rrrrrr]&&*={\bullet}&&*={\bullet}\ar[ddddllll]&&*={\bullet}\ar[ddddllll]\ar@{-}[r]&\cdots\cdots&&&&\cdots\cdots&*={\bullet}\ar@{-}[l]\ar@{-}[rrrrrr]\ar@{-}[dl]&&*={\bullet}\ar@{-}[dddlll]&&*={\bullet}\ar[ddddllll]&&*={\bullet}\ar[ddddllll]&&&&&&&\\&*={}&*={}&&&&&&\ar[dddlll]\phantom{cdots\cdots}&&&&&&\\&&&&&&&&&*={\phantom{\cdots\cdots}}\\&&&&&&&&\ar[dl]&&&&&&&&&&&&&&&&&&&&\\&*={\bullet}\ar@{-}[rrrrrr]&&*={\bullet}&&*={\bullet}t&&*={\bullet}\ar@{-}[r]]&\cdots\cdots&&&&\cdots\cdots&*={\bullet}\ar@{-}[l]\ar@{-}[rrrrrr]&&*={\bullet}&&*={\bullet}&&*={\bullet}\\}}\end{picture}
\end{figure}

The generalized Cremmer-Gervais $r$-matrices are particularly interesting because the variety $\beta_{{\mathcal T}_{m,n}}$ is of minimal dimension. In fact, Gerstenhaber and Giaquinto \cite{GG} show that $\beta_{{\mathcal T}_{m,n}}$ is a singleton set containing the point
\begin{equation}
\beta_{m,n}:=\sum_{1\leq j<\ell\leq n}\left(-1+\frac{2}{n}[(j-\ell)m^{-1}(\text{mod }n)]\right)e_{jj}\wedge e_{\ell\ell}\in\mathfrak{h}\wedge\mathfrak{h}.
\end{equation}

In order to describe the $\alpha_{\mathcal T}$-part of the generalized Cremmer-Gervais $r$-matrices, which would consequently provide us with an explicit description of the entire $r$-matrix,  we need to first introduce some notation. Here and elsewhere, when an integer is reduced modulo $\ell\in\mathbb{N}$, we always mean in $\{0,...,\ell-1\}$.  Let $n$ and $m$ be relatively prime positive integers with $m<n$.  We define a sequence of integers $i_0,i_1,i_2,...$ recursively by setting $i_0 = n$, $i_1 = m$, and $i_{t}=-i_{t-2} (\text{mod } i_{t-1})$ for $t>1$.  Eventually the sequence will reach $1$. Let $L$ be the smallest number so that $i_L=1$. For all $0\leq t\leq L$ and $j,\ell\in\mathbb{Z}$, define 
\begin{equation}
{\mathcal J}_t(j,\ell):=1-i_t+\Big[(n-\ell)(\text{mod }i_0)(\text{mod }i_1)\cdots(\text{mod }i_t)\Big]+\Big[(j-1)(\text{mod }i_0)(\text{mod }i_1)\cdots(\text{mod }i_t)\Big],
\end{equation}
and let $\psi_j$ be the unique integer in $\{1,...,n\}$ satisfying $j = m\psi_j (\text{mod }n)$. Let $\text{sgn}:\mathbb{Z}\to\{-1,0,1\}$ denote the \emph{signum} function: $\text{sgn}(0)=0$ and for all nonzero $x$, $\text{sgn}(x)=\frac{|x|}{x}$. For $n\geq 3$ there is a non-trivial automorphism of the Dynkin diagram of $\mathfrak{sl}_n$. This corresponds to the involutive  automorphism of $\mathfrak{sl}_n$ given by $\varphi: e_{j\ell}\mapsto -e_{n+1-\ell,n+1-j}$. One can readily compute that $(\varphi\otimes\varphi)r_{m,n}=r_{n-m,n}$ and we have the following

\begin{theorem} \label{the action}
The  generalized Cremmer-Gervais $r$-matrix $r_{n-m,n}$ is given by the formula
\begin{align}\label{CG rmatrix}
r_{n-m,n}(e_j\otimes e_\ell)&=\sum_{t=0}^{L-1}\sum_{N=0}^{\lfloor\frac{{\mathcal J}_t(j,\ell)-1}{i_{t+1}}\rfloor}e_{j-{\mathcal J}_t(j,\ell)+Ni_{t+1}}\otimes e_{\ell +{\mathcal J}_t(j,\ell)-Ni_{t+1}}\\
\nonumber&\phantom{\mapsto}-\sum_{t=0}^{L-1}\sum_{N=0}^{\lfloor\frac{{\mathcal J}_t(\ell,j)-1}{i_{t+1}}\rfloor}e_{j +{\mathcal J}_t(\ell,j)-Ni_{t+1}}\otimes e_{\ell-{\mathcal J}_t(\ell,j)+Ni_{t+1}}\\
\nonumber&\phantom{\mapsto}+\Bigg[\frac{1}{2}\text{sgn}(\psi_j-\psi_\ell)-\frac{1}{n}(\psi_j-\psi_\ell)\Bigg]e_j\otimes e_\ell-\frac{1}{2}\text{sgn}(j-\ell)e_\ell\otimes e_j.
\end{align}
\end{theorem}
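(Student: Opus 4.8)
The plan is to make each summand of the Belavin--Drinfel'd presentation of the Cremmer--Gervais $r$-matrix explicit and then transport the answer by the diagram automorphism $\varphi$. By the Belavin--Drinfel'd Classification Theorem applied to the BD-triple $\mathcal{T}_{m,n}$ of \eqref{T_{m,n}}, together with the fact (due to Gerstenhaber--Giaquinto) that $\beta_{\mathcal{T}_{m,n}}=\{\beta_{m,n}\}$, we have
\[
r_{m,n}=\alpha_{\mathcal{T}_{m,n}}+\beta_{m,n}+\gamma,\qquad \alpha_{\mathcal{T}_{m,n}}=2\sum_{\rho\prec\mu}e_\rho\wedge e_{-\mu},\qquad \gamma=\sum_{\mu\in\Phi_+}e_\mu\wedge e_{-\mu}.
\]
Since $(\varphi\otimes\varphi)r_{m,n}=r_{n-m,n}$, it suffices to compute the operator $r_{m,n}$ on each $e_j\otimes e_\ell$ and then apply $\varphi\otimes\varphi$ termwise via $\varphi(e_{pq})=-e_{n+1-q,\,n+1-p}$, finally repackaging the result into the indexing of \eqref{CG rmatrix}.

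The two diagonal summands are quick. Expanding $\gamma=\sum_{p<q}e_{pq}\wedge e_{qp}$ and using $e_{ab}(e_c)=\delta_{bc}e_a$ gives, in one line, $\gamma(e_j\otimes e_\ell)=\tfrac12\text{sgn}(j-\ell)\,e_\ell\otimes e_j$, with $(\varphi\otimes\varphi)\gamma=\gamma$. Writing $\beta_{m,n}\in\mathfrak{h}\wedge\mathfrak{h}$ in the basis $\{e_{cc}\wedge e_{dd}\}$ and acting on a basis vector is equally immediate; substituting $j=m\psi_j$, $\ell=m\psi_\ell\ (\text{mod }n)$ turns $[(j-\ell)m^{-1}\ (\text{mod }n)]$ into $[(\psi_j-\psi_\ell)\ (\text{mod }n)]$, and after the twist one reads off the coefficient $\tfrac12\text{sgn}(\psi_j-\psi_\ell)-\tfrac1n(\psi_j-\psi_\ell)$ of $e_j\otimes e_\ell$. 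These, together with the sums computed below, account for all terms of \eqref{CG rmatrix}; the last summand $-\tfrac12\text{sgn}(j-\ell)e_\ell\otimes e_j$ appears as the correction reconciling the $\gamma$-contribution with the $t=0$, $N=0$ boundary terms of the double sums, and this final regrouping is routine.

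The substance is $\alpha_{\mathcal{T}_{m,n}}$, equivalently the partial order $\preceq$. Identify the positive root $\epsilon_a-\epsilon_b$ ($a<b$) with the pair $(a,b)$. I would first check that $\widehat{\zeta}$ acts as the simultaneous rotation $(a,b)\mapsto(a+m,\,b+m)$ in $\mathbb{Z}/n$, defined precisely on those positive roots whose support omits $\alpha_{n-m}$ (equivalently, whose rotate is again a positive root); hence $\rho\prec\mu$ forces $\rho$ and $\mu$ to have a common coordinate difference modulo $n$, and the Hasse diagram of $\preceq$ decomposes into saturated chains, one family for each residue $(b-a)\ (\text{mod }n)$. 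Letting $2\sum_{\rho\prec\mu}e_\rho\wedge e_{-\mu}$ act on $e_j\otimes e_\ell$ then selects, for the first tensor type, the pairs $\rho=(p,j)$ and $\mu=(\ell,q)$ with $\widehat{\zeta}^N\rho=\mu$ for some $N\ge1$, each contributing $e_p\otimes e_q$; the second tensor type comes from the first by $j\leftrightarrow\ell$, swapping the tensor legs, and an overall sign, using $P\alpha_{\mathcal{T}}P=-\alpha_{\mathcal{T}}$.

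The crux --- and the reason Section~\ref{33} is devoted to this theorem --- is to show that iterating $\widehat{\zeta}$ along these chains is governed by the Euclidean-type (Hirzebruch--Jung) sequence $i_0=n$, $i_1=m$, $i_t=-i_{t-2}\ (\text{mod }i_{t-1})$. The plan is an induction on $t$ implementing a renormalization: the first-return pattern obtained by iterating rotation by $i_{t-1}$ until one re-enters a fixed block of $i_t$ consecutive indices is itself a Cremmer--Gervais chain structure for the coprime pair $(i_{t-1},i_t)$, with $i_{t+1}$ the next modulus. Running this reduction $L$ times (the algorithm stops at $i_L=1$) and tracking the accumulated shifts should show that the admissible pairs, at level $t$, organize into the family $p=j-\mathcal{J}_t(j,\ell)+Ni_{t+1}$, $q=\ell+\mathcal{J}_t(j,\ell)-Ni_{t+1}$ for $0\le N\le\lfloor(\mathcal{J}_t(j,\ell)-1)/i_{t+1}\rfloor$: the iterated reductions $(\text{mod }i_0)(\text{mod }i_1)\cdots(\text{mod }i_t)$ in the definition of $\mathcal{J}_t$ record the successive renormalizations, and the floor counts how many translates by $i_{t+1}$ stay in the admissible window. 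Summing the level-$t$ contributions over $0\le t\le L-1$, adding the $\gamma$- and $\beta_{m,n}$-pieces, applying $\varphi\otimes\varphi$, and regrouping then gives \eqref{CG rmatrix}. I expect the main obstacle to be precisely this renormalization step: establishing the self-similarity of the chains and verifying that the nested floor/modulus bookkeeping collapses to the closed form for $\mathcal{J}_t$.
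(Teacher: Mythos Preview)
Your plan is correct and matches the paper's own proof in Section~\ref{33}: decompose $r_{m,n}$ via Belavin--Drinfel'd, handle $\beta$ and $\gamma$ directly, reduce the $\alpha$-part to enumerating the chains of $\preceq$ under the rotation $\widehat{\zeta}$, and then exploit the self-similarity of these chains via the Euclidean-type recursion $i_0=n$, $i_1=m$, $i_t=-i_{t-2}\pmod{i_{t-1}}$, finally applying $\varphi\otimes\varphi$. The only real difference is presentational: what you call a ``renormalization'' or ``first-return'' argument the paper carries out with a concrete ``rotating wheels'' picture (illustrated with $n=31$, $m=12$), in which the minimal elements of the strings at one scale themselves form the strings at the next scale; this picture is exactly the device that makes precise the self-similarity you anticipate as the main obstacle, and your functions $\mathcal{C}_t$, $\mathcal{D}_t$ (implicit in your $\mathcal{J}_t$) appear in the paper as the minimal-element bookkeeping $\mathcal{A}$, $\mathcal{B}$ at each stage. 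One small point of phrasing: the term $-\tfrac12\,\text{sgn}(j-\ell)\,e_\ell\otimes e_j$ arises not from the $t=0$, $N=0$ boundary of the double sums per se, but from the discrepancy between the strict order $\prec$ used in $\alpha_{\mathcal{T}}$ and the reflexive closure $\preceq$ used to index the sums---the paper records this as an explicit $\text{sgn}(\ell-j)\,e_\ell\otimes e_j$ in $\alpha_{n-m,n}$, which then combines with $\gamma$'s $\tfrac12\,\text{sgn}(j-\ell)\,e_\ell\otimes e_j$.
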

\begin{proof}
See Section \ref{33}.
\end{proof}

\subsection{Frobenius and Quasi-Frobenius Lie algebras}\label{Frobenius and Quasi-Frobenius Lie algebras}

There is not an analogue of the  Belavin-Drinfel'd classification theorem for triangular $r$-matrices. Instead, they are characterized by a homological condition \cite{Stolin}. Assume $r\in\mathfrak{g}\wedge\mathfrak{g}$ is a triangular $r$-matrix over the base field $k$. The \emph{carrier} of $r$ is the Lie subalgebra $\mathfrak{f}\subseteq\mathfrak{g}$ spanned by $\{(\xi\otimes 1)r:\xi\in\mathfrak{g}^*\}$. The map $\check{r}:\mathfrak{f}^*\to\mathfrak{f}$ defined by $\xi\mapsto (\xi\otimes 1)r$ is a linear isomorphism of $\mathfrak{f}$ with its dual $\mathfrak{f}^*$ and induces a nondegenerate skew-symmetric bilinear form
\begin{align}
F:\mathfrak{f}\times\mathfrak{f}&\to k \\
\nonumber(X,Y)&\mapsto \left<\check{r}^{-1}X,Y\right>.
\end{align} 
Since $r$ is a triangular $r$-matrix, it follows that $F$ is Lie algebra $2$-cocycle on $\mathfrak{f}$ with coefficients in the trivial representation:
\begin{align}\label{cocycle}
F([X,Y],Z)+F([Z,X],Y)+F([Y,Z],X)=0
\end{align}
for all $X,Y,Z\in\mathfrak{f}$. Here,  we call $(\mathfrak{f},F)$ a \emph{quasi-Frobenius Lie algebra}. The above process can be inverted to give a one-to-one correspondence between quasi-Frobenius Lie algebras $(\mathfrak{f},F)$ and \emph{nondegenerate} triangular $r$-matrices on ${\mathfrak f}$. Thus, a classification of triangular $r$-matrices for $\mathfrak{g}$ would entail classifying all quasi-Frobenius subalgebras $\mathfrak{f}\subseteq\mathfrak{g}$. One way to obtain a skew-symmetric bilinear form $F$ satisfying Eqn. \eqref{cocycle} is to choose an arbitrary functional $\eta\in{\mathfrak f}^*$ and set $F (X,Y)=\eta ([X,Y])$ $(X,Y\in{\mathfrak f})$.  In this setting $({\mathfrak f},\eta)$ is called a \emph{Frobenius Lie algebra} provided $F$ is nondegenerate. The functional $\eta\in\mathfrak{f}^*$ is called the \emph{Frobenius functional} of $\mathfrak{f}$.

\subsection{Maximal Parabolic Subalgebras of ${\mathfrak sl}_n$}
Certain maximal parabolic subalgebras of ${\mathfrak sl}_n$ provide interesting examples of Frobenius Lie algebras.  For $1\leq m<n$, let $\mathfrak{p}_{m,n}$ denote the maximal parabolic subalgebra of ${\mathfrak sl}_n$ obtained by deleting the $m$-th simple negative root $e_{m+1,m}$: i.e. 
\begin{equation*}
\mathfrak{p}_{m,n}=\text{span}_k\{e_{j\ell}:j\leq m \text{ or }m<\ell    \}\subseteq\mathfrak{sl}_n. 
\end{equation*}
The following theorem can be found in \cite[Theorem 5.3]{GG}. 

\begin{theorem}
The second cohomology group $H^2(\mathfrak{p}_{m,n},k)$ is trivial for all $1\leq m<n$. 
\end{theorem}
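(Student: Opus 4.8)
The plan is to run the Hochschild–Serre spectral sequence attached to the nilradical of $\mathfrak{p}_{m,n}$. Write $\mathfrak{p}_{m,n}=\mathfrak{l}\ltimes\mathfrak{u}$, where $\mathfrak{u}=\operatorname{span}_k\{e_{j\ell}:j\le m<\ell\}$ is the nilradical and $\mathfrak{l}=\mathfrak{p}_{m,n}\cap\{\text{block-diagonal matrices}\}$ is a reductive Levi factor. The first thing I would record are three structural facts: (i) $\mathfrak{u}$ is \emph{abelian}, since $[e_{j\ell},e_{j'\ell'}]=\delta_{\ell j'}e_{j\ell'}-\delta_{\ell'j}e_{j'\ell}$ vanishes whenever $j,j'\le m<\ell,\ell'$; (ii) $\mathfrak{l}\cong\mathfrak{sl}_m\oplus\mathfrak{sl}_{n-m}\oplus\mathfrak{z}$, where $\mathfrak{z}$ is the one-dimensional center spanned by $h_0:=(n-m)\sum_{j\le m}e_{jj}-m\sum_{\ell>m}e_{\ell\ell}$ (with the convention $\mathfrak{sl}_1=0$ in the degenerate cases $m=1$ or $n-m=1$); and (iii) as an $\mathfrak{l}$-module $\mathfrak{u}\cong V_m\boxtimes V_{n-m}^{*}$, where $V_r$ is the natural module of $\mathfrak{sl}_r$, on which $h_0$ acts by the scalar $n$.

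Next I would invoke the spectral sequence
\[
E_2^{p,q}=H^p\bigl(\mathfrak{l},H^q(\mathfrak{u},k)\bigr)\ \Longrightarrow\ H^{p+q}(\mathfrak{p}_{m,n},k).
\]
Because $\mathfrak{u}$ is abelian and acts trivially on $k$, one has $H^q(\mathfrak{u},k)=\Lambda^q\mathfrak{u}^{*}$ as $\mathfrak{l}$-modules, so the only $E_2$-terms bearing on total degree $2$ are $E_2^{2,0}=H^2(\mathfrak{l},k)$, $E_2^{1,1}=H^1(\mathfrak{l},\mathfrak{u}^{*})$, and $E_2^{0,2}=(\Lambda^2\mathfrak{u}^{*})^{\mathfrak{l}}$. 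It suffices to show each of these vanishes; then no analysis of the differentials is needed.

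For $E_2^{1,1}$ and $E_2^{0,2}$ I would use the standard fact that if a central element $z$ of a Lie algebra $\mathfrak{g}$ acts invertibly on a finite-dimensional module $M$, then $H^{\bullet}(\mathfrak{g},M)=0$: on the Chevalley–Eilenberg complex the Lie derivative $\mathcal{L}_z$ equals the action of $z$ on $M$ applied coefficientwise (because $z$ is central, the $[z,-]$ terms drop out), yet $\mathcal{L}_z$ induces the zero map on cohomology since $z$ acts by an inner derivation. Applying this to $\mathfrak{g}=\mathfrak{l}$ and $z=h_0$, which by (iii) acts on $\mathfrak{u}^{*}$ by $-n$ and on $\Lambda^2\mathfrak{u}^{*}$ by $-2n$, both nonzero as $\operatorname{char}k=0$, kills $E_2^{1,1}$ and $E_2^{0,2}$. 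For $E_2^{2,0}=H^2(\mathfrak{l},k)$, the Künneth formula reduces the computation to $H^{\bullet}(\mathfrak{sl}_m,k)\otimes H^{\bullet}(\mathfrak{sl}_{n-m},k)\otimes\Lambda^{\bullet}\mathfrak{z}^{*}$, and Whitehead's lemmas ($H^1=H^2=0$ for semisimple Lie algebras with trivial coefficients) together with $\dim\mathfrak{z}=1$ leave no contribution in total degree $2$. Hence $H^2(\mathfrak{p}_{m,n},k)=0$.

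I do not expect a genuine obstacle; the argument is routine once the $\mathfrak{l}$-module structure of $\mathfrak{u}$ is identified. The only points demanding care are the bookkeeping that $h_0$ really acts by a nonzero scalar on $\mathfrak{u}$ (this amounts to $\langle h_0,e_{jj}\rangle-\langle h_0,e_{\ell\ell}\rangle=n$ for $j\le m<\ell$, valid whenever $n\ne 0$, and one also needs $2\ne 0$ for the $\Lambda^2$ factor), and the degenerate cases $m=1$ or $n-m=1$, where one semisimple summand of $\mathfrak{l}$ collapses to $0$ but the argument above carries over verbatim.
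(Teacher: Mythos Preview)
Your argument is correct. The Hochschild--Serre spectral sequence for the ideal $\mathfrak{u}\lhd\mathfrak{p}_{m,n}$ does what you claim, and the key input---that the central element $h_0$ acts on $\mathfrak{u}$ by the nonzero scalar $n$, so that $H^{\bullet}(\mathfrak{l},\Lambda^q\mathfrak{u}^{*})=0$ for $q\ge 1$ by the Cartan homotopy $\mathcal{L}_{h_0}=d\iota_{h_0}+\iota_{h_0}d$---is exactly right. The treatment of $E_2^{2,0}$ via K\"unneth and Whitehead is also fine, including the degenerate cases.

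As for comparison: the paper does not actually prove this statement. It is quoted as \cite[Theorem~5.3]{GG} and used only to deduce the corollary that $\mathfrak{p}_{m,n}$ is quasi-Frobenius if and only if it is Frobenius. So there is no ``paper's own proof'' to compare against. Your spectral-sequence argument supplies a clean self-contained proof that the paper omits; it is the standard way to compute low-degree cohomology of parabolics and is likely close in spirit to what Gerstenhaber and Giaquinto had in mind. One small stylistic point: rather than invoking the general ``central element acting invertibly kills cohomology'' principle, you could equivalently note that $\mathfrak{l}$ is reductive in characteristic zero, so $H^p(\mathfrak{l},M)=0$ for $p\ge 0$ whenever $M$ has no trivial composition factor---which is immediate here since $h_0$ acts by a nonzero scalar on $\Lambda^q\mathfrak{u}^{*}$ for $q\ge 1$. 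Either phrasing is fine.
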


This implies the following.

\begin{corollary} 
A maximal parabolic subalgebra of ${\mathfrak sl}_n$ is quasi-Frobenius if and only if it is Frobenius.
\end{corollary}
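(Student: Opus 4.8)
The statement to prove is the corollary: a maximal parabolic subalgebra $\mathfrak{p}_{m,n}\subseteq\mathfrak{sl}_n$ is quasi-Frobenius if and only if it is Frobenius.

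The plan is to deduce this directly from the preceding theorem, $H^2(\mathfrak{p}_{m,n},k)=0$, together with the cohomological description of quasi-Frobenius and Frobenius Lie algebras set up in Subsection \ref{Frobenius and Quasi-Frobenius Lie algebras}. The implication ``Frobenius $\Rightarrow$ quasi-Frobenius'' is immediate and requires no hypothesis: if $(\mathfrak{f},\eta)$ is Frobenius with Frobenius functional $\eta\in\mathfrak{f}^*$, then $F(X,Y)=\eta([X,Y])$ is by construction a skew-symmetric bilinear form satisfying the $2$-cocycle identity \eqref{cocycle}, and it is nondegenerate by the definition of Frobenius; hence $(\mathfrak{f},F)$ is quasi-Frobenius. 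So the content is in the reverse direction.

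First I would suppose $\mathfrak{p}_{m,n}$ is quasi-Frobenius, so that there exists a nondegenerate skew-symmetric bilinear form $F:\mathfrak{p}_{m,n}\times\mathfrak{p}_{m,n}\to k$ satisfying the cocycle identity \eqref{cocycle}. The key observation is that \eqref{cocycle} says precisely that $F$, viewed as an element of $\bigwedge^2\mathfrak{p}_{m,n}^*=C^2(\mathfrak{p}_{m,n},k)$, is a $2$-cocycle for the Chevalley-Eilenberg complex of $\mathfrak{p}_{m,n}$ with trivial coefficients. Since $H^2(\mathfrak{p}_{m,n},k)=0$ by the cited theorem, every such $2$-cocycle is a coboundary: there exists $\eta\in\mathfrak{p}_{m,n}^*=C^1(\mathfrak{p}_{m,n},k)$ with $F=d\eta$, i.e. $F(X,Y)=-\eta([X,Y])$ (up to the usual sign convention). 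Replacing $\eta$ by $-\eta$ if necessary, we get $F(X,Y)=\eta([X,Y])$ for all $X,Y\in\mathfrak{p}_{m,n}$. Now $F$ is nondegenerate by hypothesis, which is exactly the condition that makes $(\mathfrak{p}_{m,n},\eta)$ a Frobenius Lie algebra. Hence $\mathfrak{p}_{m,n}$ is Frobenius, completing the equivalence.

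I do not anticipate a serious obstacle here, since the heavy lifting — the vanishing $H^2(\mathfrak{p}_{m,n},k)=0$ — is supplied by the quoted theorem of Gerstenhaber and Giaquinto. The only point requiring a little care is the bookkeeping identifying the abstract cohomological statement with the concrete one: one must check that a nondegenerate $2$-cocycle being a coboundary of some $1$-cochain $\eta$ forces $\eta$ to be a \emph{Frobenius functional} in the sense defined above, i.e. that the induced form $X,Y\mapsto\eta([X,Y])$ is still nondegenerate — but this is automatic because that induced form \emph{equals} $F$, which was assumed nondegenerate. So the proof reduces to matching definitions against the cohomology-vanishing input.
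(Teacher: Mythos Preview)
Your proof is correct and is exactly the argument the paper has in mind: the paper does not write out a proof at all, but simply says ``This implies the following'' after stating the vanishing of $H^2(\mathfrak{p}_{m,n},k)$, and your argument is precisely the standard unpacking of that implication. There is nothing to add.
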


The following theorem was proved by Elashvili \cite{Elashvili}.

\begin{theorem}\label{Elashvili} 
The maximal parabolic subalgebra ${\mathfrak p}_{m,n}$ of ${\mathfrak sl}_n$ is Frobenius if and only if $m$ and $n$ are relatively prime.
\end{theorem}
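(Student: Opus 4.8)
The plan is to recast the Frobenius property as the vanishing of the \emph{index} of $\mathfrak{p}_{m,n}$ --- the minimal dimension of a coadjoint isotropy subalgebra --- and then to reduce that in turn to a combinatorial question about an explicit linear system. For any $\eta\in\mathfrak{p}_{m,n}^*$ the radical of the $2$-cocycle $F_\eta(X,Y)=\eta([X,Y])$ is precisely the stabilizer of $\eta$ under the coadjoint action; hence $\mathfrak{p}_{m,n}$ is Frobenius iff some $\eta$ has trivial coadjoint stabilizer, i.e. iff its index is $0$. (So ``index $>0$'' already forbids Frobenius, and by the Corollary above Frobenius and quasi-Frobenius coincide here.) Writing $\eta=\langle A,\,\cdot\,\rangle$ for a matrix $A$ via the trace form and using $\langle A,[X,Y]\rangle=\langle[A,X],Y\rangle$, the stabilizer of $\eta$ is $\{X\in\mathfrak{p}_{m,n}:[A,X]\in\mathfrak{p}_{m,n}^{\perp}\}$; and since $\mathfrak{p}_{m,n}^{\perp}=\text{span}_k\{e_{ij}:i\le m<j\}$ is exactly the nilradical of $\mathfrak{p}_{m,n}$, the problem becomes: choose $A$ so that ``$[A,X]$ strictly block-upper and $X\in\mathfrak{p}_{m,n}$'' forces $X=0$ in $\mathfrak{sl}_n$.

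For the forward direction, assume $\gcd(m,n)=1$. First I would write down a suitable nilpotent $A$. The naive choice --- the principal nilpotent $\sum_i e_{i+1,i}$ --- does not work in general: one checks it leaves $(m-1)(n-m-1)$ entries of $X$ completely unconstrained, so it gives a Frobenius functional only when $m\in\{1,n-1\}$. The correct $A$ is the one dictated by the arc data of the meander (seaweed) presentation of $\mathfrak{p}_{m,n}$: a sum of elementary matrices whose index pairs arise by alternately composing the reflections $i\mapsto m+1-i$ on the two blocks $\{1,\dots,m\}$, $\{m+1,\dots,n\}$ and $i\mapsto n+1-i$, so that this composition realizes the cyclic shift $i\mapsto i-m\ (\text{mod }n)$. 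With such an $A$, the condition $[A,X]\in\mathfrak{p}_{m,n}^{\perp}$ unwinds into a family of equations each identifying two entries of $X$, and chasing them amounts to traversing the orbits of the permutation $i\mapsto i-m\ (\text{mod }n)$, interrupted at the two block boundaries by the vanishing built into $\mathfrak{p}_{m,n}$ and $\mathfrak{p}_{m,n}^{\perp}$. When $\gcd(m,n)=1$ this permutation is a single $n$-cycle, so the ``graph'' of the system is connected to its zero boundary data; $X$ is then forced to be a scalar matrix, hence $0$ in $\mathfrak{sl}_n$, and $\eta$ is a Frobenius functional.

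For the converse, suppose $d:=\gcd(m,n)>1$; here I must show the index is positive for \emph{every} functional. If $d$ is even there is a one-line argument: $\dim\mathfrak{p}_{m,n}=n^2-1-m(n-m)$ has the same parity as $n^2+m^2-mn-1$, which is odd exactly when $m$ and $n$ are both even, and an odd-dimensional Lie algebra carries no nondegenerate alternating form. For odd $d>1$ this shortcut fails, and I would invoke the Dergachev--Kirillov meander-graph formula for the index of a parabolic (more generally seaweed) subalgebra of $\mathfrak{gl}_n$: the meander of $\mathfrak{p}_{m,n}$ is again governed by the shift $i\mapsto i-m\ (\text{mod }n)$, and a short number-theoretic count shows that its cycle/path contribution $2c+p$ ($c$ = number of cycles, $p$ = number of paths) equals $\gcd(m,n)$, so the index of $\mathfrak{p}_{m,n}\subseteq\mathfrak{sl}_n$ is $\gcd(m,n)-1>0$. (Alternatively, following Elashvili's original argument, one exhibits $d-1$ algebraically independent invariants of the coadjoint action directly from the block structure.)

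The heart of the proof --- and the only point at which coprimality is genuinely used --- is the number-theoretic lemma identifying the connectivity/cycle structure of the shift $s\mapsto s\pm m\ (\text{mod }n)$, modified at the two block boundaries of $\mathfrak{p}_{m,n}$, with $\gcd(m,n)$; granting it, the forward implication follows from triviality of the stabilizer of the explicit functional and the backward one from positivity of the index. A secondary obstacle is just pinning down that functional: the obvious candidate fails for almost every $m$, and one must use the meander-arc recipe (equivalently, the ``principal element'' of $\mathfrak{p}_{m,n}$ computed by Gerstenhaber and Giaquinto) to obtain an $A$ whose coadjoint stabilizer in $\mathfrak{sl}_n$ collapses to $0$.
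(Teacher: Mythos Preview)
The paper does not supply its own proof of this theorem: it is stated with the attribution ``proved by Elashvili \cite{Elashvili}'' and no argument follows. So there is nothing in the paper to compare your proposal against at the level of proof steps.

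That said, your sketch is along the lines of the standard modern treatment. The reformulation ``Frobenius $\Leftrightarrow$ index zero'' and the translation of the stabilizer condition into $[A,X]\in\mathfrak{p}_{m,n}^{\perp}$ via the trace form are correct. Your converse via the Dergachev--Kirillov meander formula is exactly how one now packages Elashvili's computation: for the seaweed with compositions $(m,n-m)$ and $(n)$ the meander decomposes into $\gcd(m,n)$ components, giving index $\gcd(m,n)-1$ in $\mathfrak{sl}_n$; this simultaneously handles both directions and makes the explicit construction of a Frobenius functional for the coprime case unnecessary (though the functional you describe is the right one, and is indeed the ``principal element'' later isolated by Gerstenhaber--Giaquinto). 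One small caution: your parity shortcut for even $d$ is fine but redundant once you invoke the index formula, and in the forward direction your description of how the equation-chasing ``traverses the orbits of $i\mapsto i-m\ (\text{mod }n)$'' is correct in outline but would need the meander bookkeeping spelled out carefully to count as a proof rather than a plausibility argument.
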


\subsection{Boundary $r$-matrices}

In \cite{GG}, it was shown that points lying in the Zariski boundary of the variety  ${\mathcal M}$ are triangular $r$-matrices.  Due to the close relationship that boundary $r$-matrices have with quasitriangular $r$-matrices, one would expect a classification result for boundary $r$-matrices that closely parallels the Belavin-Drinfeld classification. However very little is currently known about boundary $r$-matrices. The most general class of known examples include the \emph{generalized Jordanian $r$-matrices}, which are triangular $r$-matrices for the Lie algebra $\mathfrak{sl}_n$ having carrier $\mathfrak{p}_{1,n}\subseteq\mathfrak{sl}_n$.  In \cite{GG}, it was shown that the generalized Jordanian $r$-matrices lie on the boundary of the component of ${\mathcal M}$ corresponding to the Cremmer-Gervais $r$-matrices $r_{1,n}$.

This raises an interesting question and prompted Gerstenhaber and Giaquinto to make the following conjecture \cite[Conj. 5.7]{GG}.

\begin{conjecture}\label{Gerstenhaber-Giaquinto Conjecture} (The Gerstenhaber-Giaquinto Conjecture) 
Suppose $m$ and $n$ are relatively prime positive integers with $m<n$. Then the triangular $r$-matrix with carrier $\mathfrak{p}_{m,n}$ is a boundary $r$-matrix and lies in the closure of the $SL_n$-orbit of $r_{m,n}$.
\end{conjecture}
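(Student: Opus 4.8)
The plan is to realize the triangular $r$-matrix with carrier $\mathfrak{p}_{m,n}$ as an explicit \emph{limit} of conjugates of $r_{m,n}$, generalizing the classical contraction of the standard $\mathfrak{sl}_2$ $r$-matrix $e\wedge f$ to the Jordanian $r$-matrix $h\wedge e$. Since $\gcd(m,n)=1$, Thm.~\ref{Elashvili} guarantees $\mathfrak{p}_{m,n}$ is Frobenius; fixing a Frobenius functional $\eta$ produces a nondegenerate triangular $r$-matrix $r_0$ with carrier $\mathfrak{p}_{m,n}$, and because $H^2(\mathfrak{p}_{m,n},k)=0$ this $r_0$ is unique up to the action of the parabolic subgroup $P_{m,n}\subseteq SL_n$. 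As the conjecture is phrased up to the $SL_n$-orbit closure, it therefore suffices to exhibit \emph{some} triangular $r$-matrix with carrier $\mathfrak{p}_{m,n}$ inside $\overline{SL_n\cdot[r_{m,n}]}\subseteq\mathbb{P}(\mathfrak{g}\wedge\mathfrak{g})$.

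First I would set up the degeneration. Let $X\in\mathfrak{sl}_n$ be a nilpotent element in the unipotent radical of $\mathfrak{p}_{m,n}$ — the upper-right block $\operatorname{span}\{e_{j\ell}: j\le m<\ell\}$ — chosen so as to be dual, under the trace form, to the Frobenius functional $\eta$; for $m=1,n=2$ this recovers $X=e_{12}$. Put $g_s=\exp(sX)$. Since $\operatorname{ad}_X$ is nilpotent, $(g_s\otimes g_s)r_{m,n}=\sum_{k=0}^{K}s^kA_k$ is polynomial in $s$ with $A_0=r_{m,n}$, and the projective limit $\lim_{s\to\infty}[(g_s\otimes g_s)r_{m,n}]=[A_K]$ exists. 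The whole problem thus reduces to proving that the leading coefficient $A_K$ is a nonzero triangular $r$-matrix whose carrier is $\mathfrak{p}_{m,n}$.

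Triangularity of $A_K$ I expect to come for free. Conjugation is equivariant for the Schouten bracket, so $\langle\langle (g_s\otimes g_s)r_{m,n},(g_s\otimes g_s)r_{m,n}\rangle\rangle=(g_s^{\otimes 3})\langle\langle r_{m,n},r_{m,n}\rangle\rangle$; and since $r_{m,n}$ is quasitriangular its Schouten bracket is a nonzero multiple $\lambda_0 Z$ of the $\mathfrak{sl}_n$-invariant $Z$, which is fixed by $g_s^{\otimes 3}$. Hence $\langle\langle r_s,r_s\rangle\rangle=\lambda_0 Z$ is \emph{independent} of $s$. Viewing the left-hand side as a degree-$2K$ polynomial in $s$ whose top coefficient is $\langle\langle A_K,A_K\rangle\rangle$, the constancy forces $\langle\langle A_K,A_K\rangle\rangle=0$ as soon as $K\ge 1$. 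Thus $A_K$ is triangular, and once its carrier is identified as $\mathfrak{p}_{m,n}$ the conjecture follows: every point of $SL_n\cdot[r_{m,n}]$ is quasitriangular, so it lies in $\mathcal{M}$, while $A_K$ is triangular and hence lies in $\overline{\mathcal{M}}\setminus\mathcal{M}$, i.e. $A_K$ is a boundary $r$-matrix in the orbit closure.

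The hard part is the remaining, purely combinatorial, computation: to pin down $X$ and then, via the explicit formula of Thm.~\ref{the action}, to show $A_K\ne 0$ (so $K\ge1$) and that $(\xi\otimes 1)A_K$ spans exactly $\mathfrak{p}_{m,n}$. There is structural evidence that the support collapses correctly — since $X$ lies in the upper-right block, any iterated bracket $(\operatorname{ad}_X)^k e_{ab}$ with $k\ge 1$ has row index $\le m$ or column index $>m$ and so lies in $\mathfrak{p}_{m,n}$ — but controlling which monomials $e_{ab}\otimes e_{cd}$ actually survive to top order in $s$, and verifying that they assemble into a \emph{nondegenerate} form on all of $\mathfrak{p}_{m,n}$ rather than on a proper subalgebra, is delicate. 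The obstruction is that $r_{m,n}$ is governed by the subtractive-Euclidean recursion $i_0=n,\ i_1=m,\ i_t=-i_{t-2}\ (\mathrm{mod}\ i_{t-1})$, so the number of layers $L$ grows with the continued-fraction complexity of $n/m$, and the top-degree bookkeeping must be carried out uniformly across all $L$ layers at once. This is exactly the difficulty that collapses when $m=n-2$ (where $L$ is minimal), which is why the differential-Dunkl interpretation of Section~\ref{differential-Dunkl operators} suffices to settle that case while the general conjecture remains delicate.
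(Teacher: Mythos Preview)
The statement is a \emph{conjecture}; the paper does not prove it in general --- it establishes only the case $m=n-2$ (Thm.~\ref{GGConj}), with $m=1$ cited from \cite{GG}. Your proposal is likewise not a proof but a strategy, and you say so yourself in the last paragraph. The overall philosophy --- degenerate $r_{m,n}$ along a unipotent one-parameter subgroup and take the top-degree coefficient --- is exactly the mechanism of Thm.~\ref{constructing boundary solutions} and is how the paper handles the cases it settles, so at that level the approaches agree.

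Two concrete issues with your setup would have to be fixed before the outline could even reproduce the known cases. First, your prescription for $X$ is internally inconsistent: the trace-form dual of the Frobenius functional $e_{12}^*$ on $\mathfrak{p}_{1,2}$ is $e_{21}$, not the $e_{12}$ you claim to recover; more generally there is no reason for the trace-dual of $\eta\in\mathfrak{p}_{m,n}^*$ to land in the nilradical of $\mathfrak{p}_{m,n}$. Second, in neither of the cases the paper actually settles is the degenerating nilpotent taken from the nilradical. For $m=1$ the element $X$ of Eqn.~\eqref{x} is a principal nilpotent whose summands $e_{j,j+1}$ with $j\ge 2$ lie in the Levi of $\mathfrak{p}_{1,n}$; for $m=n-2$ the paper needs \emph{two} nilpotents $E_1,E_2$ generating a Heisenberg algebra, with $E_2=e_{n,n-1}+e_{n-2,n-3}+\cdots+e_{3,2}$ lying entirely in the Levi (and not even in $\mathfrak{n}_+$). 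Restricting $X$ to the abelian nilradical --- where $\operatorname{ad}_X$ is only $3$-step nilpotent on all of $\mathfrak{sl}_n$ --- is too rigid to match these, and your ``structural evidence'' that the support collapses into $\mathfrak{p}_{m,n}$ then does not apply to the nilpotents that actually work.
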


In \cite[Thm. 5.9]{GG}, they prove their conjecture in the case when $m=1$.  The following theorem provides a way to construct boundary $r$-matrices.

\begin{theorem} \cite[Prop. 5.1]{GG}\label{constructing boundary solutions}
Let $t\in k$ be nonzero. Suppose $r\in{\mathfrak g}\wedge{\mathfrak g}$ and $r_t = r+tr^\prime+\cdots+t^dr^{\prime\prime} \in{\mathfrak g}\wedge\mathfrak{g}$ are $r$-matrices  with $\langle\langle r,r\rangle\rangle=\langle\langle r_t,r_t\rangle\rangle$. Then $r^{\prime\prime}$ is a boundary $r$-matrix.
\end{theorem}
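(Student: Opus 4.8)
The statement asserts: if $r_t = r + t r' + \cdots + t^d r''$ (a polynomial in $t$ with matrix coefficients in $\mathfrak{g}\wedge\mathfrak{g}$, leading coefficient $r''$) is an $r$-matrix for all nonzero $t$, with $\langle\langle r_t, r_t\rangle\rangle = \langle\langle r, r\rangle\rangle$ (i.e. the Schouten bracket is independent of $t$), then the leading coefficient $r''$ is a boundary $r$-matrix, meaning a triangular $r$-matrix lying in the Zariski closure of the variety $\mathcal{M}$ of quasitriangular $r$-matrices (or, more precisely, on the boundary, after rescaling).

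The plan is to exploit the quadratic/bilinear nature of the Schouten bracket. First I would set up the bilinear form $\langle\langle\,,\,\rangle\rangle : (\mathfrak{g}\wedge\mathfrak{g})^{\otimes 2}\to U(\mathfrak{g})^{\otimes 3}$ by polarizing the quadratic map $r\mapsto \langle\langle r,r\rangle\rangle$; then $\langle\langle r_t, r_t\rangle\rangle$ is a polynomial in $t$ of degree $2d$ whose coefficients are explicit bilinear expressions in the $r^{(i)}$. The hypothesis $\langle\langle r_t,r_t\rangle\rangle = \langle\langle r,r\rangle\rangle$ forces \emph{every} coefficient of $t^j$ for $j\geq 1$ to vanish; in particular the top coefficient $t^{2d}$ gives $\langle\langle r'', r''\rangle\rangle = 0$, so $r''$ is a triangular $r$-matrix. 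That disposes of the triangularity half of "boundary $r$-matrix."

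Next I would handle the closure statement. Consider the substitution $t\mapsto 1/s$ and rescale: the element $s^d r_{1/s} = r'' + s\,(\text{lower terms}) + \cdots + s^d r$ is, for each $s\neq 0$, a nonzero scalar multiple of the $r$-matrix $r_{1/s}$, hence (being a rescaling of a quasitriangular/triangular solution with the same vanishing Schouten data) still satisfies $CYB_\lambda = 0$ for a suitable $\lambda = \lambda(s)$ — in fact $\lambda$ scales as $s^{2d}\langle\langle r,r\rangle\rangle$-type data, so for $s\neq 0$ this family lies in $\mathcal{M}$ (or its triangular-boundary locus) up to the projective rescaling that defines $\mathcal{M}\subseteq\mathbb{P}(\mathfrak{g}\wedge\mathfrak{g})$. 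Taking the limit $s\to 0$ along this algebraic curve lands on $r''$, which therefore lies in the Zariski closure of $\mathcal{M}$; since $\langle\langle r'',r''\rangle\rangle = 0$ while the nearby points (for generic $s$) are quasitriangular when $\langle\langle r,r\rangle\rangle\neq 0$, the point $r''$ lies on the \emph{boundary} $\overline{\mathcal{M}}\setminus\mathcal{M}$, not in the interior. This is exactly the content of "boundary $r$-matrix."

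The main obstacle is bookkeeping the rescaling correctly so that the limiting curve genuinely sits inside $\mathcal{M}$ (a projective variety of \emph{quasitriangular} solutions) rather than merely inside the affine variety cut out by $CYB_\lambda = 0$ for varying $\lambda$: one must check that the normalization making $\langle\langle\cdot,\cdot\rangle\rangle$ equal to a fixed $\mathfrak{g}$-invariant (as in the Belavin-Drinfeld normalization $\gamma$) can be arranged uniformly along the curve, or else pass to the cone over $\mathcal{M}$ and argue there. A secondary subtlety is the degenerate case $\langle\langle r,r\rangle\rangle = 0$, where every $r^{(i)}$-combination is triangular and one should note the statement still holds vacuously or interpret "boundary" within the triangular locus. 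Modulo these normalization points, the argument is the short polarization-plus-limit argument sketched above, and I would expect the authors' proof (citing \cite[Prop. 5.1]{GG}) to proceed along essentially these lines.
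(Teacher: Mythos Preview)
The paper does not actually prove this theorem; it is quoted without proof as \cite[Prop.~5.1]{GG}, so there is no in-paper argument to compare against. Your sketch is the standard one and is essentially the argument in Gerstenhaber--Giaquinto: expanding $\langle\langle r_t,r_t\rangle\rangle$ as a polynomial in $t$ and reading off the top coefficient gives $\langle\langle r'',r''\rangle\rangle=0$, and the rescaled family $s^d r_{1/s}$ (equivalently, the projective point $[r_{1/s}]\in\mathcal{M}$) limits to $[r'']$ as $s\to 0$, placing $r''$ in $\overline{\mathcal{M}}\setminus\mathcal{M}$.

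One small clarification: the normalization worry you raise is not really an obstacle, because $\mathcal{M}$ is defined \emph{projectively}. For $s\neq 0$ the class $[s^d r_{1/s}]=[r_{1/s}]$ is literally a point of $\mathcal{M}$ (being equivalent to the quasitriangular $r$-matrix $r_{1/s}$), so no separate normalization of the invariant $\langle\langle\cdot,\cdot\rangle\rangle$ along the curve is needed. The degenerate case $\langle\langle r,r\rangle\rangle=0$ is indeed a side issue; in the paper's applications one always starts from a quasitriangular $r$, so the curve genuinely lies in $\mathcal{M}$ and the limit lies on its boundary.
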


In particular, if $r$ is an $r$-matrix and $X\in\mathfrak{g}$ is nilpotent.  Then the highest degree term on $exp(tX).r$ is a boundary $r$-matrix. For instance, let 
\begin{eqnarray}\label{x}
X=\frac{1}{2}\left[(n-1)e_{12}+(n-2)e_{23}+\cdots+1\cdot e_{n-1,n}\right]\in{\mathfrak sl}_n.
\end{eqnarray}
Then $exp(tX).r_{1,n}=r_{1,n}+t[X,r_{1,n}]$. Thus, $[X,r_{1,n}]$ is a boundary $r$-matrix. We have the following

\begin{theorem} \cite[Thm. 5.9]{GG}\label{GG 5.9}
The boundary $r$-matrix $[X,r_{1,n}]$ lies in the closure of the $SL_n$-orbit of $r_{1,n}$. The carrier of $[X,r_{1,n}]$ is the maximal parabolic subalgebra $\mathfrak{p}_{1,n}\subseteq\mathfrak{sl}_n$.
\end{theorem}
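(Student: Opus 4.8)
The plan is to treat the two assertions separately: a short limiting argument settles the orbit–closure statement, and an explicit computation of a carrier settles the second. (That $[X,r_{1,n}]$ is a boundary, hence triangular, $r$-matrix is part of the hypothesis, and is also immediate from Theorem \ref{constructing boundary solutions} applied with $r=r_{1,n}$, $r_t=\exp(tX).r_{1,n}$, $d=1$, since $\langle\langle\,\cdot\,,\,\cdot\,\rangle\rangle$ is $SL_n$-invariant and hence $\langle\langle r_t,r_t\rangle\rangle=\langle\langle r_{1,n},r_{1,n}\rangle\rangle$.)

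For the orbit–closure claim, recall that $X$ is nilpotent, so $\exp(tX)\in SL_n$ for every $t\in k$, and $\exp(tX).r_{1,n}=r_{1,n}+t[X,r_{1,n}]$. The elements $r_{1,n}$ and $[X,r_{1,n}]$ are linearly independent: $[X,r_{1,n}]\neq 0$ (its carrier, computed below, is nonzero), and it cannot be a scalar multiple of $r_{1,n}$ since $r_{1,n}$ is quasitriangular, $[X,r_{1,n}]$ is triangular, and triangularity is scale-invariant. Hence $[u:t]\mapsto[\,u\,r_{1,n}+t[X,r_{1,n}]\,]$ is a well-defined morphism $\mathbb{P}^1\to\mathbb{P}(\mathfrak{sl}_n\wedge\mathfrak{sl}_n)$ which maps $\mathbb{P}^1\setminus\{[0:1]\}$ into the $SL_n$-orbit of $[r_{1,n}]$ and $[0:1]$ to $[\,[X,r_{1,n}]\,]$; since the orbit closure is closed, $[X,r_{1,n}]$ lies in the closure of the $SL_n$-orbit of $r_{1,n}$.

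For the carrier, I would first make $r_{1,n}$ explicit --- either from Theorem \ref{the action} with $m=n-1$, for which the recursion collapses to $i_t=n-t$ and $L=n-1$, or, more convenient for a hands-on bracket, from the Belavin--Drinfel'd form $r_{1,n}=\alpha_{\mathcal T_{1,n}}+\beta_{1,n}+\gamma$, where for the Cremmer--Gervais triple $\mathcal T_{1,n}$ the induced partial order is the staircase $e_{ab}\prec e_{b+N,\,a+N}$ for $1\le a<b\le n-1$ and $1\le N\le n-b$, so that $\alpha_{\mathcal T_{1,n}}=2\sum e_{ab}\wedge e_{b+N,\,a+N}$. Then I would compute $[X,r_{1,n}]=(\operatorname{ad}X\otimes 1+1\otimes\operatorname{ad}X)(r_{1,n})$ term by term, using $\operatorname{ad}(e_{i,i+1})e_{pq}=\delta_{i+1,p}\,e_{iq}-\delta_{q,i}\,e_{p,i+1}$, and then establish: (a) no matrix unit $e_{j1}$ with $j\ge 2$ occurs in either tensor slot of the answer --- equivalently, the leading term of the degenerating family $\exp(tX).r_{1,n}$ stays inside $\mathfrak p_{1,n}=\text{span}_k\{e_{j\ell}:j=1\text{ or }\ell\ge 2\}$; and (b) the matrix units that do occur span all of $\mathfrak p_{1,n}$, equivalently $[X,r_{1,n}]$ has tensor rank $\dim\mathfrak p_{1,n}=n^2-n$. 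For (b) it suffices to fix a total order on the matrix units of $\mathfrak p_{1,n}$ and verify that $\check r:\mathfrak p_{1,n}^{*}\to\mathfrak{sl}_n$ is triangular with nonvanishing diagonal, hence injective with image $\mathfrak p_{1,n}$. Since $[X,r_{1,n}]$ is a triangular $r$-matrix, (a) and (b) together identify its carrier as $\mathfrak p_{1,n}$, and nondegeneracy of the associated cocycle $F$ is then automatic from the carrier isomorphism $\check r$. Alternatively, one can short-circuit (a)--(b) by recognizing $[X,r_{1,n}]$ as one of the generalized Jordanian $r$-matrices, which are exactly the triangular $r$-matrices with carrier $\mathfrak p_{1,n}$.

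I expect the main difficulty to be the exactness in (a)--(b): showing that bracketing the non-homogeneous nilpotent $X$ against the many terms of $r_{1,n}$ produces neither a forbidden $e_{j1}$ with $j\ge 2$ nor enough cancellation to shrink the support below $\mathfrak p_{1,n}$. This is precisely where the specific coefficients $\tfrac12(n-i)$ of $X$ and the precise Cartan part $\beta_{1,n}$ of $r_{1,n}$ must interact, so the verification requires the exact numbers rather than a coarse count. A convenient bookkeeping device is the grading $\deg(e_{j\ell})=\ell-j$, for which $X$ is homogeneous of degree $1$: it controls which graded components of $r_{1,n}$ can contribute a given matrix unit and reduces (a) to a statement about the extreme graded pieces.
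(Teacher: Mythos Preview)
The paper does not prove this theorem at all: it is simply quoted from \cite[Thm.~5.9]{GG} and used as input, with no proof environment following the statement. So there is nothing in the paper to compare your argument against.

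That said, a few remarks on your proposal itself. Your orbit--closure argument is fine and is essentially the standard one: the one-parameter family $t\mapsto\exp(tX).r_{1,n}=r_{1,n}+t[X,r_{1,n}]$ extends to a morphism $\mathbb{P}^1\to\mathbb{P}(\mathfrak{sl}_n\wedge\mathfrak{sl}_n)$, and the point at infinity lands on $[X,r_{1,n}]$. For the carrier, however, what you have written is a plan rather than a proof. You never carry out the bracket $[X,r_{1,n}]$, you do not verify (a) or (b), and your ``alternatively'' clause is circular --- the generalized Jordanian $r$-matrices are \emph{defined} in this paper (and in \cite{EH,GG}) to be the $[X,r_{1,n}]$, so invoking that name does not establish that the carrier is $\mathfrak{p}_{1,n}$. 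If you want a genuine proof you must either perform the explicit computation you outline, or cite the relevant computation in \cite{GG} as the present paper does.
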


The $r$-matrices $[X,r_{1,n}]$ are referred to as the \emph{generalized Jordanian $r$-matrices of Cremmer-Gervais type} (or the \emph{Jordanian $r$-matrices} for short)\cite{EH}. This is in reference to the fact that $[X,r_{1,2}]$ is the Jordanian $r$-matrix for ${\mathfrak sl}_2$.

\section{The Polynomial Representation of the Rational Cherednik Algebras, the algebra $\widetilde{H}$ of Differential-Dunkl Operators, and the Generalized Cremmer-Gervais r-matrices}\label{differential-Dunkl operators}

The aim of this section is to demonstrate how Eqn. \ref{CG rmatrix} above, which describes the action of the generalized Cremmer-Gervais r-matrices on $k^n\otimes k^n$, is related to the action of certain elements in the polynomial representation of the rational Cherednik algebras. To illustrate, when $m=1$, we have
\begin{equation}
\displaystyle{r_{n-1,n}(e_j\otimes e_\ell)=\text{sgn}(j-\ell)\Bigg(\frac{1}{2}(e_j\otimes e_\ell+e_\ell\otimes e_j)+\sum_{\tiny{\begin{array}{c}s\text{ strictly}\\ \text{between }j\\ \text{ and }\ell\end{array}}}e_s\otimes e_{j+\ell-s}\Bigg)-\frac{j-\ell}{n}e_j\otimes e_\ell}
\end{equation}
for all $j,\ell\in\{1,...,n\}$.
We let $TruncPol_n(k[x_1,x_2])$ denote the $k$-linear subspace of $k[x_1,x_2]$ spanned by the monomials $x_1^{j}x_2^{\ell}$ with $0\leq j,\ell<n$. Identifying the vector spaces $TruncPol_n(k[x_1,x_2])$ and $k^n\otimes k^n$ via $x_1^{j-1}x_2^{\ell-1}\leftrightarrow e_j\otimes e_\ell$ yields the formula 
\begin{equation}\label{CG action}
r_{n-1,n}= -\frac{1}{n}\left(x_1\partial_1-x_2\partial_2\right)+\frac{\Delta}{2},
\end{equation}
where $\Delta$ is the divided difference operator
\begin{equation}
\Delta :=\frac{x_1+x_2}{x_1-x_2} (1-\sigma),
\end{equation}
$\sigma$ is the operator that interchanges the variables $\sigma.f(x_1,x_2)=f(x_2,x_1)$ for all $f\in TruncPol_n(k[x_1,x_2])$, and $\partial_1$ and $\partial_2$ are the partial derivative operators with respect to $x_1$ and $x_2$.  In \cite{J}, it was shown that Eqn. \ref{CG action} above can be expressed in terms of the so-called Dunkl operators, which arise in the polynomial representations of rational Cherednik algebras.

\subsection{Rational Cherednik Algebras of Type G(m,1,2)}

First we review the construction of the rational Cherednik algebra $H_{\kappa, \bf c}(W)$ associated to a reflection group $W$ (and reflection representation $W\to GL(V)$, where $V$ is equipped with a nondegenerate inner product).  While most of the statements and results from this section are true over a base field $k$ containing a primitive $m$-th root of one and with $2n\neq 0$ (for a fixed choice of positive integers $m$ and $n$), we assume the base field $k$ is algebraically closed of characteristic zero. For our purposes, the only reflection group we are concerned with is $W=G(m,1,2)$, the group of $2\times 2$ monomial matrices having entries in $\{1,\omega,\omega^2,...,\omega^{m-1}\}$, where $\omega\in k$ is a primitive $m$-th root of unity. We may assume $V$ is the natural $2$-dimensional representation of $G(m,1,2)$. 
For a more detailed treatment on the rational Cherednik algebras, especially for the type we consider, we refer the reader to the work of Chmutova \cite{Chmut} and Griffeth \cite{Griff, Griff2}, where they discuss the representation theory of such algebras.  

Nondegeneracy of the inner product $\left(\,\,,\,\,\right)$ on $V$ induces an inner product on the dual vector space $V^*$, also denoted with $\left(\,\,,\,\,\right)$. A \emph{reflection} is an element $s\in W$ so that $\text{codim}[\text{Ker}(Id-s)]=1$. Let $ R$ denote the set of reflections in $W$. To each reflection $s$, let $h_s\subseteq V$ be the hyperplane fixed by $s$, and let $\alpha_s\in V^*$ be a nonzero functional that vanishes on $h_s$. Put $\alpha_s^{\vee}= \frac{2\left(\alpha_s,-\right)}{\left(\alpha_s,\alpha_s\right)}\in V^{**}\cong V$ and let $\{{\bf c}_s\}_{s\in R}$ be a collection of scalars in $k$ indexed by the reflections and satisfying ${\bf c}_{wsw^{-1}}={\bf c}_s$ for all $s\in  R$, $w\in W$. 
For us, the set of reflections is $\{\sigma\}\cup\{\xi_1^j,\xi_2^j,\xi_1^j\xi_2^{-j}\sigma:j=1,...,m-1\}$, where 
 \begin{align}
 \xi_1=\left(\begin{array}{cc}\omega&0\\ 0&1\end{array}\right),   & &\xi_2=\left(\begin{array}{cc} 1&0\\ 0&\omega\end{array}\right),  & &\sigma=\left(\begin{array}{cc}0&1\\ 1&0\end{array}\right).
 \end{align}
There are $m$ conjugacy classes of reflections. For $j\in\{1,...,m-1\}$, let ${\bf c}_j$ denote the scalar corresponding to the conjugacy class $\{\xi_1^j,\xi_2^j\}$, and let ${\bf c}_0$ denote the scalar associated to the conjugacy class $\{\sigma,\xi_1\xi_2^{-1}\sigma,...,\xi_1^{m-1}\xi_2^{1-m}\sigma\}$. 
Let $\kappa\in k$ be a fixed scalar.
Finally, let $ F$ be the $k$-algebra freely generated by the group algebra $kW$ and the symmetric algebras $k[V]$ and $k[V^*]$, where the natural inclusion maps $k[W]\to F$, $k[V]\to F$,  $k[V^*]\to F$ are algebra homomorphisms. The \emph{rational Cherednik algebra} $ H_{\kappa,\bf c}(W)$ is the quotient of $F$ by the relations
\begin{align}
&wxw^{-1}=x^w,& &wyw^{-1}=y^w,& &yx-xy=\kappa\left(x,y\right)-\sum_{s\in R}{\bf c}_s\left(\alpha_s,y\right)\left(x,\alpha_s^{\vee}\right)s,
\end{align}
for all $w\in W$, $x\in V^*$, and $y\in V$. The algebra $H_{\kappa,\bf c}(W)$ is $\mathbb{Z}$-graded with $\deg(x)=1$, $\deg(y)=-1$, $\deg(g)=0$ for every $x\in V^*$, $y\in V$, and $g\in kW$. For a $kW$-module $\Lambda$, define the \emph{Verma} module $M(\Lambda)$ by 
\begin{equation}
M(\Lambda):=\text{Ind}_{k[V^*]\rtimes kW}^{^{ H_{\bf c}(W)}}\Lambda,
\end{equation}
where $y\in k[V^*]$ acts via multiplication by $y(0)$ on $\Lambda$. When $\Lambda$ is the trivial representation {\bf 1}, we obtain the \emph{polynomial representation} $M({\bf 1})\cong k[V]$ and the elements of $V$ act via Dunkl operators,
\begin{equation}
y.f=\kappa\partial_yf-\sum_{s\in R}{\bf c}_s\left(\alpha_s,y\right)\frac{f-sf}{\alpha_s}
\end{equation}
for every $y\in V$ and $f\in k[V]$.

\begin{definition/proposition}
The rational Cherednik algebra $ H_{\kappa,\bf c}(G(m,1,2))$ is the $k$-algebra generated by $\sigma$, $\xi_1$, $\xi_2$, $x_1$, $x_2$, $y_1$, $y_2$ and has the following defining relations:
\begin{align}
&\label{R1}\sigma^2=\xi_1^m=\xi_2^m=1,&&\\
&\xi_i\xi_j=\xi_j\xi_i,\,\,\, x_ix_j=x_jx_i,\,\,\,y_iy_j=y_jy_i,&& (i,j\in\{1,2\}),& \\
&\xi_iy_i=\omega y_i\xi_i,\,\,\,\xi_i x_i=\omega^{-1}x_i\xi_i,&& (i=1,2),& \\
&\sigma x_i=x_{3-i}\sigma,\,\,\,\sigma y_i=y_{3-i}\sigma,\,\,\,\sigma\xi_i=\xi_{3-i}\sigma,&& (i=1,2),& \\
&\xi_ix_j=x_j\xi_i,\,\,\,\xi_iy_j=y_j\xi_i,&& (i\neq j),& \\
&\left[y_i,x_i\right]=\kappa-{\bf c}_0\sum_{r=0}^{m-1}\xi_1^r\xi_2^{-r}\sigma-\sum_{r=1}^{m-1}{\bf c}_r(1-\omega^{-r})\xi_i^r,&&    (i=1,2),& \\
&\label{R2}\left[y_1,x_2\right]={\bf c}_0\sum_{r=0}^{m-1}\omega^{-r}\xi_1^r\xi_2^{-r}\sigma,\hspace{.5cm}\left[y_2,x_1\right]={\bf c}_0\sum_{r=0}^{m-1}\omega^r\xi_1^r\xi_2^{-r}\sigma.&&
\end{align}
\end{definition/proposition}

In the polynomial representation of $H_{\kappa,\bf c}(G(m,1,2))$, the Dunkl operators act via the following formulas (c.f. \cite[Prop 2.2]{BB}):
\begin{align}
\label{Gm12y1}y_1.x_1^jx_2^\ell&\mapsto \kappa jx_1^{j-1}x_2^\ell-m{\bf c}_0\left(\sum_{N=0}^{\lfloor{\frac{j-\ell-1}{m}}\rfloor}x_1^{j-1-Nm}x_2^{\ell+Nm}-\sum_{N=1}^{\lfloor{\frac{\ell-j}{m}}\rfloor}x_1^{j-1+Nm}x_2^{\ell-Nm}\!\!\right)-\sum_{N=1}^{m-1}{\bf c}_N(1-\omega^{-Nj})x_1^{j-1}x_2^\ell,\\
\label{Gm12y2}y_2.x_1^jx_2^\ell&\mapsto \kappa\ell x_1^jx_2^{\ell-1}+m{\bf c}_0\left(\sum_{N=1}^{\lfloor{\frac{j-\ell}{m}}\rfloor}x_1^{j-Nm}x_2^{\ell-1+Nm}-\sum_{N=0}^{\lfloor{\frac{\ell-j-1}{m}}\rfloor}x_1^{j+Nm}x_2^{\ell-1-Nm}\right)-\sum_{N=1}^{m-1}{\bf c}_N(1-\omega^{-N\ell})x_1^jx_2^{\ell-1},
\end{align}
for all $j,\ell\geq 0$.

\subsection{The Algebra  $\widetilde{H}$ and the $r$-matrices {\bf $r_{n-1,n}$} and {\bf $r_{n-2,n}$}}
Given the similarity between the formulas for the actions of $r_{n-m,n}$ on $TruncPol_n(k[x_1,x_2])$ and the Dunkl operators for the algebra $ H_{\kappa,\bf c}(G(m,1,2))$ (compare Eqn. \ref{CG rmatrix} with Eqns. \ref{Gm12y1} and \ref{Gm12y2}), this suggests that one may reinterpret the Cremmer-Gervais $r$-matrices in terms of the algebras $H_{\kappa,\bf c}(G(m,1,2))$, or at least  modified versions of $H_{\kappa,\bf c}(G(m,1,2))$.   The following result verifies this statement for the case when $m=1$. 

\begin{theorem}\cite[Section 4]{J}
The action of the Cremmer-Gervais $r$-matrix $r_{n-1,n}$ on $k^n\otimes k^n$ coincides with the action of  $\frac{-1}{n}\left(x_1y_1-x_2y_2\right)\in H_{1,n/2}(G(1,1,2))$ on the truncated polynomial ring $TruncPol_n(k[x_1,x_2])$.
\end{theorem}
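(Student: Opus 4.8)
The plan is to reduce the statement to the Dunkl operator formulas for $W=G(1,1,2)$, which for this parameter is just the symmetric group $S_2$, and then to recognize $\frac{-1}{n}(x_1y_1-x_2y_2)$ as the operator already appearing in Eqn.~\ref{CG action}. First I would specialize the defining data to $m=1$, $\kappa=1$, ${\bf c}_0=n/2$. For $S_2$ the only reflection is $\sigma$, with $\alpha_\sigma=x_1-x_2$, and there are no parameters ${\bf c}_r$ with $r\ge 1$. Setting $m=1$ in Eqns.~\ref{Gm12y1}--\ref{Gm12y2} (where the ${\bf c}_N$-sums are empty and the floor functions become ordinary differences) shows that the Dunkl operators act on $k[x_1,x_2]$ by
\[
y_1.f=\partial_1 f-\frac{n}{2}\,\frac{f-\sigma f}{x_1-x_2},\qquad y_2.f=\partial_2 f+\frac{n}{2}\,\frac{f-\sigma f}{x_1-x_2}.
\]

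The second step is a one-line computation. For any $f\in k[x_1,x_2]$,
\[
x_1y_1.f-x_2y_2.f=(x_1\partial_1-x_2\partial_2)f-\frac{n}{2}\,\frac{(x_1+x_2)(f-\sigma f)}{x_1-x_2}=(x_1\partial_1-x_2\partial_2)f-\frac{n}{2}\,\Delta f,
\]
so $\frac{-1}{n}(x_1y_1-x_2y_2)=-\frac{1}{n}(x_1\partial_1-x_2\partial_2)+\frac{1}{2}\Delta$ as operators on $k[x_1,x_2]$. This is exactly the operator of Eqn.~\ref{CG action}, which under the identification $x_1^{j-1}x_2^{\ell-1}\leftrightarrow e_j\otimes e_\ell$ realizes the action of $r_{n-1,n}$ on $k^n\otimes k^n$; in particular it restricts to an endomorphism of $TruncPol_n(k[x_1,x_2])$ (one can also see this directly, since $\Delta(x_1^jx_2^\ell)=(x_1+x_2)\sum_{t=0}^{j-\ell-1}x_1^{j-1-t}x_2^{\ell+t}$ for $j>\ell$, while $\Delta(x_1^jx_2^\ell)=-\Delta(x_1^\ell x_2^j)$ and $\Delta(x_1^jx_2^j)=0$, and $x_1\partial_1-x_2\partial_2$ is diagonal on monomials). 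Hence the comparison of actions makes sense and the theorem follows.

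There is no serious obstacle: the content is the operator identity $x_1y_1-x_2y_2=(x_1\partial_1-x_2\partial_2)-\frac{n}{2}\Delta$, and the only care needed is in the $m=1$ specialization of the Dunkl formulas. Should one prefer a proof not invoking Eqn.~\ref{CG action}, I would instead apply $\frac{-1}{n}(x_1y_1-x_2y_2)$ to a monomial $x_1^{j-1}x_2^{\ell-1}$ directly via Eqns.~\ref{Gm12y1}--\ref{Gm12y2}, combine the two Dunkl summations, and match the outcome term by term against the explicit formula for $r_{n-1,n}(e_j\otimes e_\ell)$ displayed at the beginning of this section. There the delicate bookkeeping is with the summation endpoints — the $N=0$ versus $N=1$ terms of the two sums are precisely what combine into the boundary contribution $\frac{1}{2}(e_j\otimes e_\ell+e_\ell\otimes e_j)$ — and one may exploit the $\sigma$-antisymmetry $\sigma(x_1y_1-x_2y_2)\sigma=-(x_1y_1-x_2y_2)$, mirroring $PrP=-r$, to reduce to the case $j\ge\ell$.
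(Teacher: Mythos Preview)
Your proof is correct and follows essentially the same route as the paper: write down the Dunkl operators $y_1=\kappa\partial_1-{\bf c}_0\frac{1-\sigma}{x_1-x_2}$ and $y_2=\kappa\partial_2+{\bf c}_0\frac{1-\sigma}{x_1-x_2}$ for $G(1,1,2)$, specialize to $\kappa=1$, ${\bf c}_0=n/2$, and observe that $\frac{-1}{n}(x_1y_1-x_2y_2)$ coincides with the right-hand side of Eqn.~\ref{CG action}. Your additional remarks on stability of $TruncPol_n$ and the alternative direct monomial computation are extra detail not present in the paper but not a different method.
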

\begin{proof}
In the rational Cherednik algebra $H_{\kappa,\bf c}(G(1,1,2))$, the Dunkl operators are $y_1 = \kappa\partial_1 - {\bf c_0}\frac{1-\sigma}{x_1-x_2}$ and $y_2 = \kappa\partial_2 +{\bf c_0}\frac{1-\sigma}{x_1-x_2}$. The result follows from substituting these expressions into Eqn.\ref{CG action}.
\end{proof}

Throughout the remainder of this section, we focus on the case when $m=2$ and $n$ is odd. The main algebra we consider is the rational Cherednik algebra associated to the reflection group $G(2,1,2)$. For brevity, put 
\[H=H_{\kappa,\bf c}(G(2,1,2)).\]

A straightforward yet lengthy computation yields the following proposition, which we provide to give an example of an $r$-matrix arising from the polynomial representation of $H$.

\begin{proposition} Let ${\mathcal E}=x_1y_1-x_2y_2+ {\bf c}_0(\xi_1-\xi_2)\sigma\in H$.
As an operator on the polynomial representation $k[x_1,x_2]\cong k[x]\otimes k[x]$, we have $CYB_{4{\bf c}_0^2}({\mathcal E})=0$.
\end{proposition}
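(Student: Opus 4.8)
The plan is to verify the defining identity directly. Writing $\langle\langle\tau,\tau\rangle\rangle:=[\tau_{12},\tau_{13}]+[\tau_{12},\tau_{23}]+[\tau_{13},\tau_{23}]$ for an operator $\tau$ on $k[x]^{\otimes 2}$, the assertion $CYB_{4{\bf c}_0^2}({\mathcal E})=0$ is precisely the identity $\langle\langle{\mathcal E},{\mathcal E}\rangle\rangle=4{\bf c}_0^2\,Z$ of operators on $k[x]^{\otimes 3}\cong k[x_1,x_2,x_3]$; since ${\mathcal E}$ preserves total degree, so do the three commutators and $Z$, and the identity may be checked one graded piece at a time. The first step is to put ${\mathcal E}$ into transparent form: using the Dunkl formulas \ref{Gm12y1}--\ref{Gm12y2} (here $\omega=-1$) together with the actions of $\sigma,\xi_1,\xi_2$ on monomials, one computes ${\mathcal E}={\mathcal D}+{\mathcal E}_0$, where
\[
{\mathcal D}=\kappa(x_1\partial_1-x_2\partial_2)+{\bf c}_1(\xi_1-\xi_2),\qquad {\mathcal E}_0=-{\bf c}_0\bigl[\Delta+\widetilde{\Delta}-(\xi_1-\xi_2)\sigma\bigr],
\]
and $\widetilde{\Delta}:=\frac{x_1-x_2}{x_1+x_2}(1-\xi_1\xi_2\sigma)$. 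Here ${\mathcal D}$ is diagonal in the monomial basis, while ${\mathcal E}_0$ is homogeneous of degree one in ${\bf c}_0$ and involves neither $\kappa$ nor ${\bf c}_1$; since $Z$ also involves neither, the identity must be insensitive to those parameters, which is the key organizing observation.

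Because $\langle\langle\cdot,\cdot\rangle\rangle$ is quadratic, write $\langle\langle a+b,a+b\rangle\rangle=\langle\langle a,a\rangle\rangle+B(a,b)+\langle\langle b,b\rangle\rangle$ with $B(a,b):=[a_{12},b_{13}]+[b_{12},a_{13}]+[a_{12},b_{23}]+[b_{12},a_{23}]+[a_{13},b_{23}]+[b_{13},a_{23}]$. The claim then splits into three parts. (i) $\langle\langle{\mathcal D},{\mathcal D}\rangle\rangle=0$: immediate, since ${\mathcal D}_{12},{\mathcal D}_{13},{\mathcal D}_{23}$ are simultaneously diagonal in the monomial basis of $k[x_1,x_2,x_3]$ and hence pairwise commute. (ii) $B({\mathcal D},{\mathcal E}_0)=0$: grouping the six commutators by which pair $\{i,j\}$ of tensor slots carries ${\mathcal E}_0$, each of the three resulting sums collapses — up to operators supported on the remaining slot, which commute with ${\mathcal E}_{0,ij}$ — to $\pm\,[{\mathcal E}_{0,ij},\,\kappa(x_i\partial_i+x_j\partial_j)+{\bf c}_1(\xi_i+\xi_j)]$, and this vanishes because ${\mathcal E}_0$ commutes with the total weight $x_1\partial_1+x_2\partial_2$ and with $\xi_1+\xi_2$ (the latter a genuine feature of the specific combination: the parity-mixing parts of $\Delta$ and $\widetilde{\Delta}$ cancel). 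This is the analogue here of the vanishing of the $\beta$-contribution in the Belavin--Drinfel'd set-up, and is in any case forced, $B({\mathcal D},{\mathcal E}_0)$ being linear in $\kappa$ and ${\bf c}_1$ while $4{\bf c}_0^2\,Z$ is not. (iii) $\langle\langle{\mathcal E}_0,{\mathcal E}_0\rangle\rangle=4{\bf c}_0^2\,Z$: the heart of the matter.

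For (iii), homogeneity in ${\bf c}_0$ lets us set ${\bf c}_0=1$, so we must show $\langle\langle\Delta+\widetilde{\Delta}-(\xi_1-\xi_2)\sigma,\ \Delta+\widetilde{\Delta}-(\xi_1-\xi_2)\sigma\rangle\rangle=4Z$. I would expand this into the nine contributions coming from the three summands and evaluate both sides on the monomial basis $x_1^ax_2^bx_3^c$; since every operator present preserves the total degree $a+b+c$, each contribution is a finite sum of monomials whose coefficients are assembled from the nested floor-function expressions in \ref{Gm12y1}--\ref{Gm12y2}. The purely-$\Delta$ contribution $\langle\langle\Delta,\Delta\rangle\rangle$ is precisely the $m=1$ Cremmer--Gervais computation underlying Eqn. \ref{CG action} and \cite[Section 4]{J}, and $\langle\langle\widetilde{\Delta},\widetilde{\Delta}\rangle\rangle$ is a parallel sign-twisted version; these serve as guides, while the seven remaining ``mixed'' contributions — in particular all those involving the correction term $(\xi_1-\xi_2)\sigma$ — must combine with them to yield exactly $4Z$. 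The cancellations become transparent once the calculation is organized by the parities of $a,b,c$ and by the order relations among them.

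The main obstacle is exactly this monomial-level bookkeeping, where two points are genuinely delicate. First, the divided-difference ``singularities'' along $x_i=\pm x_j$ must cancel among $\Delta$, $\widetilde{\Delta}$ and the correction term, so that $\langle\langle{\mathcal E}_0,{\mathcal E}_0\rangle\rangle$ is a bona fide polynomial operator at all before one can compare it with $4Z$. Second, the surviving polynomial operator must come out to be exactly $4Z$ — that is, $u\otimes v\otimes w\mapsto 4(w\otimes u\otimes v-v\otimes w\otimes u)$ — with no leftover $\mathfrak{g}$-invariant of the wrong scale and, crucially, no non-invariant remainder. This is the ``lengthy'' computation referred to in the statement; the role of the ${\mathcal D}+{\mathcal E}_0$ splitting is to keep it structured, by confining all $\kappa$- and ${\bf c}_1$-dependence to the trivially vanishing piece (i) and the forced-to-vanish piece (ii).
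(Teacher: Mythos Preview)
The paper does not actually give a proof of this proposition: it is introduced with ``A straightforward yet lengthy computation yields the following proposition'' and no further argument is supplied. So there is essentially nothing to compare your proposal against except the bare assertion that it is a computation.

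Your proposal goes further than the paper by providing a genuine structural reduction. Your decomposition ${\mathcal E}={\mathcal D}+{\mathcal E}_0$ is correct (it follows directly from the Dunkl formulas recorded in the proof of Theorem~\ref{m=2 case}), and your arguments for (i) and (ii) are sound. In particular your claim that ${\mathcal E}_0$ commutes with $\xi_1+\xi_2$ checks out: the only potentially nonzero piece of $[\xi_1+\xi_2,\Delta+\widetilde\Delta]$ is proportional to $(1-\xi_1\xi_2)(\xi_1+\xi_2)\sigma$, and $(1-\xi_1\xi_2)(\xi_1+\xi_2)=0$ since $\xi_i^2=1$. Your regrouping of the six $B({\mathcal D},{\mathcal E}_0)$ terms by the slot-pair carrying ${\mathcal E}_0$ is also correct for all three pairs, not just the two symmetric ones. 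These reductions are a real gain over the paper's treatment: they isolate the parameters $\kappa,{\bf c}_1$ into pieces that vanish for transparent reasons.

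That said, your step (iii) is where the entire content lies, and you have outlined a plan of attack rather than carried it out. The nine-fold expansion, the cancellation of the divided-difference poles along $x_i=\pm x_j$, and the identification of the residue with $4Z$ are all left as intentions. This is not a gap in the sense of a wrong idea --- the plan is reasonable and the organizing principles (parity of exponents, order relations) are the natural ones --- but what remains is precisely the ``lengthy computation'' the paper declines to write down. So your proposal is a well-structured sketch that reduces the problem correctly, but it is not yet a complete proof; in particular you have not verified the constant $4{\bf c}_0^2$, which only emerges from actually executing (iii).
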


 Converting ${\mathcal E}$ into an element of  $\mathfrak{gl}_n\wedge\mathfrak{gl}_n$ yields
\begin{align*}
{\mathcal E} &=\sum_{1\leq j<\ell\leq n}\left(a_{j\ell}e_{jj}\wedge e_{\ell\ell}+b_{j\ell}e_{j\ell}\wedge e_{\ell j}\right)+\sum_{1\leq p<j<\ell\leq n}c_{j\ell}e_{j\ell}\wedge e_{\ell-p,j-p}
\end{align*}
where $a_{j\ell}, b_{j\ell}, c_{j\ell}$ are the constants
\begin{align*}
&a_{j\ell}=4{\bf c}_0-2\kappa(\ell-j)+2{\bf c}_1\left(\left(-1\right)^\ell-\left(-1\right)^j\right)\\
&b_{j\ell}=2{\bf c}_0\left(-1-(-1)^{j+\ell}+ (-1)^\ell - (-1)^j\right)\\
&c_{j\ell}=4{\bf c}_0\left(1+(-1)^{j+\ell}\right).
\end{align*}
Setting the deformation parameters to ${\bf c}_0=\frac{n}{4}$ and ${\bf c}_1=0$ is the only instance when ${\mathcal E}\in{\mathfrak sl}_n\wedge {\mathfrak sl}_n$.

In what follows we express $r_{n-2,n}$ in terms of operators in a slightly modified version of $H$.  Let $\widetilde{H}$ denote the $k$-algebra 
\begin{equation}
\widetilde{H} := H\left<x_1^{-1},x_2^{-1},\partial_1,\partial_2\right>.
\end{equation}
This is the algebra obtained from $H$ by adjoining the inverses of $x_1$ and $x_2$ as well as the partial derivative operators, $\partial_1$ and $\partial_2$. The commuting relations involving $x_1^{-1}$ and $x_2^{-1}$ can be obtained from the relations in $H$ involving $x_1$ and $x_2$. The partial derivatives $\partial_1$ and $\partial_2$ commute with each other and satisfy the following relations in $\widetilde{H}$:
\begin{equation*}
[\partial_j,x_\ell] = \delta_{j\ell}, \hspace{1cm} \partial_j\sigma = \sigma\partial_{3-j}, \hspace{1cm} \partial_j\xi_\ell = (-1)^{\delta_{j\ell}}\xi_\ell\partial_j,
\end{equation*}
for $j,\ell\in\{1,2\}$. The commuting relations involving the partial derivatives with the Dunkl operators $y_1$ and $y_2$ are less obvious, but we will not make use of them.  In Theorem \ref{m=2 case} below, we will demonstrate how the $r$-matrix $r_{n-2,n}$ can be interpreted in terms of the algebra $\widetilde{H}$. First we define the following elements of  $kW\subseteq\widetilde{H}$:
\begin{align*} 
&g_1=-\frac{\sigma}{4{\bf c_0}},
& &g_2=\frac{\kappa\sigma}{4{\bf c_0}}-\frac{1}{2n},
&  &g_3=\frac{1}{4}(1-\xi_1)(1-\xi_2),
& &g_4=-\frac{{\bf c_1}}{4{\bf c_0}}(\xi_1-\xi_2)\sigma.
\end{align*}

We have the following 

\begin{theorem}\label{m=2 case} Identifying $k^n\otimes k^n$ with $TruncPol_n(k[x_1,x_2])$ yields
\begin{equation}\label{555}
r_{n-2,n}=g_1(x_1y_1-x_2y_2)+g_2(x_1\partial_1-x_2\partial_2)+\left(\frac{x_2}{x_1}-\frac{x_1}{x_2}\right)g_3+g_4\in\widetilde{H}.
\end{equation}
\end{theorem}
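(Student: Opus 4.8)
The plan is to prove Theorem \ref{m=2 case} by a direct comparison of the actions of both sides of Eqn. \eqref{555} on the basis monomials $x_1^{j-1}x_2^{\ell-1}$ of $TruncPol_n(k[x_1,x_2])$, matching the result against the closed formula for $r_{n-2,n}(e_j\otimes e_\ell)$ supplied by Theorem \ref{the action} specialized to $m=2$. First I would unwind the right-hand side: the operator $x_1y_1-x_2y_2$ acts via the Dunkl formulas \eqref{Gm12y1} and \eqref{Gm12y2} with $m=2$ and $\omega=-1$, so $y_1.x_1^jx_2^\ell$ and $y_2.x_1^jx_2^\ell$ each decompose into a derivative term (coefficient $\kappa$), a pair of $\mathbf{c}_0$-sums running over the arithmetic progressions $x_1^{j-1-2N}x_2^{\ell+2N}$ etc., and a $\mathbf{c}_1$-term proportional to $(1-(-1)^{\cdot})$; multiplying by $x_1$ resp. $x_2$ shifts indices back so that $x_1y_1-x_2y_2$ sends $x_1^{j-1}x_2^{\ell-1}$ (i.e. $e_j\otimes e_\ell$) to a combination of monomials $x_1^{j-1+\text{(even shift)}}x_2^{\ell-1-\text{(even shift)}}$. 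Then I would note that $g_1=-\sigma/(4\mathbf{c}_0)$ kills the $\kappa$ part up to a $\sigma$ twist and scales the $\mathbf{c}_0$-sums to have rational coefficients, while $g_2(x_1\partial_1-x_2\partial_2)$ contributes the diagonal Euler-type piece, $g_4=-\frac{\mathbf{c}_1}{4\mathbf{c}_0}(\xi_1-\xi_2)\sigma$ is designed precisely to absorb the leftover $\mathbf{c}_1$-terms coming from both $y_1$ and $y_2$, and the term $\left(\frac{x_2}{x_1}-\frac{x_1}{x_2}\right)g_3$ (with $g_3=\frac14(1-\xi_1)(1-\xi_2)$ a projector onto monomials with both exponents odd, i.e. onto $e_j\otimes e_\ell$ with $j,\ell$ even) accounts for one boundary monomial in each progression that the Dunkl sums miss because of the truncation. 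The upshot is that the right-hand side is manifestly independent of $\kappa$ and of $\mathbf{c}_0,\mathbf{c}_1$ once these specific group-algebra coefficients are inserted, so without loss of generality I may set $\kappa=1$, $\mathbf{c}_0=\tfrac{n}{4}$, $\mathbf{c}_1=0$, which matches the normalization under which $\mathcal{E}\in\mathfrak{sl}_n\wedge\mathfrak{sl}_n$ in the Proposition above.

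With that reduction, the core of the argument is a bookkeeping identity: I would fix $j,\ell\in\{1,\dots,n\}$, expand the right-hand side of \eqref{555} acting on $e_j\otimes e_\ell$ using the formulas above, and separately expand the right-hand side of \eqref{CG rmatrix} with $m=2$. For $m=2$ the auxiliary sequence is $i_0=n$, $i_1=2$, and since $n$ is odd, $i_2=-n\pmod 2 = 1$, so $L=2$ and the double sum over $t$ collapses to the two terms $t=0$ (step $i_1=2$) and $t=1$ (step $i_2=1$). I would compute $\mathcal{J}_0(j,\ell)=1-2+[(n-\ell)\bmod n \bmod 2]+[(j-1)\bmod n\bmod 2] = -1+[(n-\ell)\bmod 2]+[(j-1)\bmod 2]$ and $\mathcal{J}_1(j,\ell)=0+[\text{parity stuff}\bmod 1]+[\cdots] $, and likewise for the swapped arguments; these small explicit values are what govern the ranges of the inner $N$-sums. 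The claim then becomes: the $t=0$ double sum in \eqref{CG rmatrix} equals (up to the normalizing scalar $-1/n = g_1\cdot(-m\mathbf{c}_0)$-type factor) the $\mathbf{c}_0$-part of $g_1(x_1y_1-x_2y_2)$ plus the correction $\left(\frac{x_2}{x_1}-\frac{x_1}{x_2}\right)g_3$; the $t=1$ double sum (which is a single monomial or empty, since $i_2=1$ forces $N$ to range over at most one value when $\mathcal{J}_1\in\{0,1\}$) together with the diagonal terms of \eqref{CG rmatrix} match $g_2(x_1\partial_1-x_2\partial_2)$ plus the $\sigma$-twisted derivative piece of $g_1 x_1y_1 - g_1 x_2 y_2$; and the $\psi_j,\psi_\ell$ and $\mathrm{sgn}$ terms in \eqref{CG rmatrix}, where $\psi_j$ is the unique element of $\{1,\dots,n\}$ with $j\equiv 2\psi_j\pmod n$, reorganize into the same diagonal and anti-diagonal contributions. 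Throughout I would track the $\sigma$-action carefully, since $g_1$ and $g_2$ carry a $\sigma$ and $\sigma.x_1^{j-1}x_2^{\ell-1}=x_1^{\ell-1}x_2^{j-1}$, which is exactly what produces the $e_\ell\otimes e_j$ components (the "$PrP=-r$" skew part) in \eqref{CG rmatrix}.

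I expect the main obstacle to be the matching of the $\mathbf{c}_0$-sums against the $t=0$ double sum over $N$ near the \emph{boundary} of the range $0\le N\le\lfloor(\mathcal{J}_0(j,\ell)-1)/i_1\rfloor$: the Dunkl operator on the genuine polynomial ring $k[x_1,x_2]$ produces infinitely many terms, but we are working in the truncated quotient $TruncPol_n$, so all monomials with an exponent $\ge n$ or $<0$ must be discarded, and one must verify that the surviving terms, after the index shift from multiplying by $x_1$ or $x_2$ and after the parity projection that $g_3$ encodes, are in precise bijection with the pairs $(N)$ appearing in \eqref{CG rmatrix} — including the delicate case analysis on the four parities of $(j,\ell)$, where exactly one of the four classes produces the extra $x_2/x_1 - x_1/x_2$ monomial. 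A secondary but real difficulty is confirming that the passage to $\widetilde H$ is legitimate, i.e. that the negative powers $x_1^{-1},x_2^{-1}$ and the operator $\frac{x_2}{x_1}-\frac{x_1}{x_2}$ act on $TruncPol_n$ in the intended way (the monomial $x_1^{j-1}x_2^{\ell-1}$ with $j=1$ would formally produce $x_1^{-1}$, but $g_3$ annihilates it unless $\ell$ as well is in the right parity class, and even then the product lands back inside $TruncPol_n$ — this needs to be checked, not merely asserted). Once these boundary/parity bookkeeping points are settled, the identity is purely combinatorial and follows by comparing coefficients of each monomial $x_1^{a}x_2^{b}$ on both sides.
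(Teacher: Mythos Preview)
Your strategy is sound but proceeds quite differently from the paper. Instead of a direct monomial-by-monomial comparison, the paper first records the $m=2$ specialization of Theorem~\ref{the action} and then identifies the Belavin--Drinfeld pieces $\alpha$, $\beta$, $\gamma$ of $r_{n-2,n}$ individually with explicit divided-difference-type operators on $k[x_1^{\pm1},x_2^{\pm1}]$: for instance $\alpha$ is matched with $-\tfrac{M}{4}(1+\xi_1\xi_2)-\tfrac12\sigma M+\tfrac14\Delta+\tfrac14\xi_1\Delta\xi_2+\bigl(\tfrac{x_2}{x_1}-\tfrac{x_1}{x_2}\bigr)g_3$, where $\Delta=\tfrac{x_1+x_2}{x_1-x_2}(1-\sigma)$ and $M$ is the sign operator $x_1^jx_2^\ell\mapsto\mathrm{sgn}(j-\ell)x_1^jx_2^\ell$. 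The auxiliary $M$-terms cancel upon summing $\alpha+\beta+\gamma$, leaving the clean intermediate identity $r_{n-2,n}=-\tfrac{1}{2n}(x_1\partial_1-x_2\partial_2)+\tfrac14(\Delta+\xi_1\Delta\xi_2)+\bigl(\tfrac{x_2}{x_1}-\tfrac{x_1}{x_2}\bigr)g_3$. The final step substitutes the closed Dunkl expressions $y_i=\kappa\partial_i\mp\mathbf{c}_0\tfrac{1-\sigma}{x_1-x_2}-\mathbf{c}_0\tfrac{1-\xi_1\xi_2\sigma}{x_1+x_2}-\mathbf{c}_1\tfrac{1-\xi_i}{x_i}$ into this to recover \eqref{555}; parameter-independence then drops out of the algebra rather than needing a separate argument.

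Your route bypasses the $\Delta$-form and is closer to a machine verification; the cost is exactly the parity/boundary case analysis you anticipate, which the $\alpha,\beta,\gamma$ decomposition organizes more cleanly. Two small corrections to watch: in your computation of $\mathcal{J}_0$ you used $i_1=2$ where the definition calls for $i_0=n$, so in fact $\mathcal{J}_0(j,\ell)=j-\ell$ and the $t=0$ inner sum has range $0\le N\le\lfloor(j-\ell-1)/2\rfloor$, while it is $\mathcal{J}_1$ that equals $-1+[(n-\ell)\bmod 2]+[(j-1)\bmod 2]$; and the Dunkl operators on $k[x_1,x_2]$ already produce only finitely many terms (the sums in \eqref{Gm12y1}--\eqref{Gm12y2} are finite), so the genuine issue is stability of $TruncPol_n$ under the operator in \eqref{555}, not truncation of an infinite tail.
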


\begin{proof}

When $m=2$,  Eqn. \ref{CG rmatrix} becomes
\begin{align*}
r_{n-2,n}(e_j\otimes e_\ell)&=
\sum_{N=0}^{\lfloor\frac{j-\ell-1}{2}\rfloor} e_{\ell+2N}\otimes e_{j-2N}-\sum_{N=0}^{\lfloor\frac{\ell-j-1}{2}\rfloor} e_{\ell-2N}\otimes e_{j+2N}\\
\nonumber&\phantom{===}+\frac{1}{4}(1+(-1)^j)(1+(-1)^\ell) \left(e_{j-1}\otimes e_{\ell+1}-e_{j +1}\otimes e_{\ell-1}\right)\\
\nonumber&\phantom{===}+\Bigg(\frac{1}{2}-\frac{1}{n}\left[\left((j-\ell)\frac{n+1}{2}\right)(\text{mod }n)\right]-\frac{1}{2}\delta_{j\ell}\Bigg)e_j\otimes e_\ell-\frac{1}{2}\text{sgn}(j-\ell)e_\ell\otimes e_j.
\end{align*}

The $\alpha$, $\beta$, and $\gamma$  parts of $r_{n-2,n}$ (refer to Eqns. \ref{a1234}-\ref{c1234}) act via 
\begin{align*}
\alpha &\longmapsto-\frac{M}{4}(1+\xi_1\xi_2)-\frac{1}{2}\sigma M +\frac{\Delta}{4}+\frac{1}{4}\xi_1\Delta\xi_2+\left(\frac{x_2}{x_1}-\frac{x_1}{x_2}\right)g_3,\\
\beta &\longmapsto\frac{M}{4}(1+\xi_1\xi_2)-\frac{1}{2n}(x_1\partial_1-x_2\partial_2),\\
\gamma &\longmapsto\frac{1}{2}\sigma M,
\end{align*}
where $M$ is the linear operator on $k[x_1^{\pm 1},x_2^{\pm 1}]$ defined by $M: x_1^jx_2^\ell\mapsto sgn(j-\ell)x_1^jx_2^\ell$. Summing $\alpha$, $\beta$, and $\gamma$ gives us 
\begin{equation*}
\label{CG2 formula}r_{n-2,n} =-\frac{1}{2n}(x_1\partial_1-x_2\partial_2)+\frac{1}{4}\left(\Delta+\xi_1\Delta\xi_2 +4\left(\frac{x_2}{x_1}-\frac{x_1}{x_2}\right)g_3\right).
\end{equation*}
In the algebra $H$, the Dunkl operators are $y_1=\kappa\partial_1-{\bf c_0}\frac{1-\sigma}{x_1-x_2}-{\bf c_0}\frac{1-\xi_1\xi_2\sigma}{x_1+x_2}-{\bf c_1}\frac{1-\xi_1}{x_1}$ and $y_2=\kappa\partial_2+{\bf c_0}\frac{1-\sigma}{x_1-x_2}-{\bf c_0}\frac{1-\xi_1\xi_2\sigma}{x_1+x_2}-{\bf c_1}\frac{1-\xi_2}{x_2}$. Substituting these into the above formula yields Eqn. \ref{555}.
\end{proof}

To conclude this section, we verify that $r_{n-2,n}$, expressed in terms of the algebra $\widetilde{H}$, is indeed an $r$-matrix. In fact, we have the following lemma, which is a more general result.

\begin{lemma} Let $\Delta = \frac{x_1+x_2}{x_1-x_2}(1-\sigma)$. Then
\begin{equation}
\label{3434}CYB_4\left(\Delta +\xi_1\Delta\xi_2+a_1\left(\frac{x_2}{x_1}-\frac{x_1}{x_2}\right)g_3+a_2(x_1\partial_1-x_2\partial_2)\right)=0
\end{equation}
for all $a_1,a_2\in k$.
\end{lemma}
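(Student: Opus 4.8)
The plan is to verify \eqref{3434} as an identity of operators on the threefold tensor product, each leg carrying $TruncPol_n(k[x])$ (all the operators below preserve it, so one may equally work with $k[x^{\pm1}]$ in each leg). Abbreviate the argument of $CYB_4$ as $\tau = u + a_1 w_1 + a_2 w_2$, where
\[ u = \Delta + \xi_1\Delta\xi_2, \qquad w_1 = \Bigl(\tfrac{x_2}{x_1}-\tfrac{x_1}{x_2}\Bigr)g_3, \qquad w_2 = x_1\partial_1 - x_2\partial_2. \]
The first thing to record is the skew-symmetry $P\tau P = -\tau$: conjugation by $\sigma$ (which realizes $P$) fixes $g_3$ and negates each of $\Delta$, $\xi_1\Delta\xi_2$, $\tfrac{x_2}{x_1}-\tfrac{x_1}{x_2}$ and $x_1\partial_1-x_2\partial_2$, via $\sigma\xi_i=\xi_{3-i}\sigma$, $\sigma x_i = x_{3-i}\sigma$, $\sigma\partial_i = \partial_{3-i}\sigma$ and $\sigma\Delta\sigma = -\Delta$; hence $CYB_4(\tau)$ has the expected form.

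The organizing device is that $p \mapsto \langle\langle p,p\rangle\rangle = CYB_0(p)$ is a quadratic map. Writing $Q(p) := CYB_0(p)$ and $B(p,q):=Q(p+q)-Q(p)-Q(q)$ for its polarization, one expands
\[ CYB_4(\tau) = \bigl(Q(u)-4Z\bigr) + a_1 B(u,w_1) + a_2 B(u,w_2) + a_1^2 Q(w_1) + a_1 a_2 B(w_1,w_2) + a_2^2 Q(w_2), \]
and since the left side must vanish for all $a_1,a_2 \in k$, the lemma reduces to the six identities $Q(u)=4Z$, $Q(w_1)=Q(w_2)=0$ and $B(u,w_1)=B(u,w_2)=B(w_1,w_2)=0$. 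The identities involving $w_2$ are immediate: the components $w_{2,12},w_{2,13},w_{2,23}$ all lie in the abelian span of the leg-wise Euler operators, so $Q(w_2)=0$; and since $u$ and $w_1$ each commute with the total Euler operator $x_1\partial_1+x_2\partial_2$ (being homogeneous of total degree $0$), the Schouten brackets $B(u,w_2)$ and $B(w_1,w_2)$ telescope to $0$. For $Q(w_1)=0$ I would use a parity count: $g_3$ annihilates $x_1^a x_2^b$ unless $a$ and $b$ are both odd, in which case $\tfrac{x_2}{x_1}-\tfrac{x_1}{x_2}$ returns a combination of monomials with both exponents even; thus $w_1$ sends a monomial with both exponents odd to one with both exponents even, and each of the three commutators in $Q(w_1)$ then vanishes slot by slot.

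What remains are the two computational identities $B(u,w_1)=0$ and $Q(u)=4Z$. The former is a direct (if tedious) check using the relations of $\widetilde{H}$ linking $\Delta,\xi_1,\xi_2,\sigma$ to $x_1^{\pm1}x_2^{\mp1}$, streamlined by $\Delta^2 = 0$, $(\xi_1\Delta\xi_2)^2 = 0$, $\Delta(\xi_1\Delta\xi_2) = (1+\sigma)(\xi_1\xi_2-1)$ and $(\xi_1\Delta\xi_2)\Delta = (1-\sigma)(\xi_1\xi_2-1)$ (all consequences of $(1+\sigma)(1-\sigma)=0$ and $\sigma^2=\xi_i^2=1$) together with $g_3\xi_1\xi_2 = g_3$. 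The identity $Q(u)=4Z$, that is, $CYB_4(\Delta+\xi_1\Delta\xi_2)=0$, is the main obstacle, and I would deduce it from Theorem \ref{m=2 case}: the formula in its proof reads $4 r_{n-2,n} = u + 4 w_1 - \tfrac2n w_2$, so the decomposition above together with the five vanishing identities already obtained gives $16\,Q(r_{n-2,n}) = Q(u)$. Since $r_{n-2,n}$ is an $r$-matrix (Belavin--Drinfel'd), $Q(r_{n-2,n})$ is $\mathfrak{sl}_n$-invariant, hence a scalar multiple of $Z$, and therefore so is $Q(u)$; evaluating $Q(u) = CYB_0(u)$ on a single monomial on which $Z$ acts nontrivially (for instance $1\otimes 1\otimes x$) pins the scalar down to $4$. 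A self-contained alternative, avoiding any appeal to $r_{n-2,n}$, is to prove $CYB_4(\Delta+\xi_1\Delta\xi_2)=0$ by a direct computation on the monomial basis, using $\Delta^2 = (\xi_1\Delta\xi_2)^2 = 0$, the product formulas above, and the braid-type relations among the divided-difference operators $\Delta_{12},\Delta_{13},\Delta_{23}$; this is longer. In either route the real content sits in this single-operator identity, and the point of the polarization decomposition is precisely to isolate it from the otherwise routine bookkeeping.
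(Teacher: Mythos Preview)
Your argument is correct, and it is considerably more informative than the paper's proof, which is the single word ``Compute.'' Since the paper gives no details, your polarization decomposition is not the same route so much as an organized version of what a bare computation would eventually uncover. The pieces involving $w_2$ and $Q(w_1)$ are dispatched exactly as you say: the Euler-operator components commute, homogeneity of degree~$0$ forces the mixed brackets with $w_2$ to vanish, and the odd--odd $\to$ even--even parity of $w_1$ kills every product in $Q(w_1)$. The two algebraic identities you record, $\Delta(\xi_1\Delta\xi_2)=(1+\sigma)(\xi_1\xi_2-1)$ and $(\xi_1\Delta\xi_2)\Delta=(1-\sigma)(\xi_1\xi_2-1)$, check out and do streamline the remaining cross-term $B(u,w_1)$.

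The one genuinely new idea in your write-up is the shortcut for $Q(u)=4Z$: using the formula $4r_{n-2,n}=u+4w_1-\tfrac{2}{n}w_2$ from the proof of Theorem~\ref{m=2 case} together with the Belavin--Drinfel'd classification to conclude that $Q(u)=16\,Q(r_{n-2,n})$ is $\mathfrak{sl}_n$-invariant, hence a multiple of $Z$, and then pinning the scalar by evaluating at $1\otimes1\otimes x$. This is not circular (Theorem~\ref{m=2 case} is proved independently, and BD is external), but it does invert the logical flow the paper intends: the lemma and its corollary are offered as a \emph{direct} verification that the $\widetilde{H}$-expression is an $r$-matrix, independent of BD. Your alternative route (a direct monomial computation of $CYB_4(\Delta+\xi_1\Delta\xi_2)=0$) respects that intent; the BD shortcut buys brevity at the cost of relying on the very theorem the lemma is meant to illustrate. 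Either way, the residual computational load sits in $B(u,w_1)=0$, which you correctly flag as tedious but routine.
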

\begin{proof}
Compute.
\end{proof}

\begin{corollary} We have $CYB_{\frac{1}{4}}(r_{n-2,n})=0$.
\end{corollary}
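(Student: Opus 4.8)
The plan is to derive the corollary directly from the preceding lemma and Theorem~\ref{m=2 case} by a simple rescaling. From Theorem~\ref{m=2 case} we have the explicit expression
\[
r_{n-2,n}=-\frac{1}{2n}(x_1\partial_1-x_2\partial_2)+\frac{1}{4}\Bigl(\Delta+\xi_1\Delta\xi_2+4\Bigl(\frac{x_2}{x_1}-\frac{x_1}{x_2}\Bigr)g_3\Bigr),
\]
which (unwinding the definitions of $g_1,g_2$ via the Dunkl operator formulas) is exactly the operator appearing inside $CYB_4(\cdot)$ in the lemma, divided by $4$, with the choice of parameters $a_1=4$ and $a_2=-2/n$. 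So first I would observe that $r_{n-2,n}=\frac{1}{4}\,T$ where $T:=\Delta+\xi_1\Delta\xi_2+4\bigl(\frac{x_2}{x_1}-\frac{x_1}{x_2}\bigr)g_3-\frac{2}{n}(x_1\partial_1-x_2\partial_2)$ is the operator to which the lemma applies with $a_1=4$, $a_2=-2/n\in k$.

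The second step is to track how $CYB_\lambda$ behaves under scaling. Since the bracket terms $[\tau_{12},\tau_{13}]+[\tau_{12},\tau_{23}]+[\tau_{13},\tau_{23}]$ are quadratic in $\tau$ while the $Z$-term is linear, one has $CYB_\lambda(c\tau)=c^2\bigl([\tau_{12},\tau_{13}]+[\tau_{12},\tau_{23}]+[\tau_{13},\tau_{23}]\bigr)-\lambda Z = c^2\bigl(CYB_{\lambda/c^2}(\tau)+\tfrac{\lambda}{c^2}Z\bigr)-\lambda Z = c^2\,CYB_{\lambda/c^2}(\tau)$. Applying this with $c=\tfrac14$ and $\tau=T$ gives $CYB_{\lambda}(r_{n-2,n})=CYB_\lambda(\tfrac14 T)=\tfrac{1}{16}CYB_{16\lambda}(T)$. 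Taking $\lambda=\tfrac14$ makes $16\lambda=4$, so $CYB_{1/4}(r_{n-2,n})=\tfrac{1}{16}CYB_4(T)$, and the lemma (with $a_1=4,a_2=-2/n$) says $CYB_4(T)=0$, hence $CYB_{1/4}(r_{n-2,n})=0$.

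The one point requiring a brief check is that the operator $T$ really coincides with $4r_{n-2,n}$ as an operator on $TruncPol_n(k[x_1,x_2])\cong k^n\otimes k^n$: this is just matching the last displayed formula in the proof of Theorem~\ref{m=2 case} against the expression inside $CYB_4$ in the lemma, which is immediate once one notes that the lemma's operator with $a_1=4$ is $\Delta+\xi_1\Delta\xi_2+4\bigl(\frac{x_2}{x_1}-\frac{x_1}{x_2}\bigr)g_3+a_2(x_1\partial_1-x_2\partial_2)$ and the coefficient $-\frac{1}{2n}$ in front of $(x_1\partial_1-x_2\partial_2)$ in $r_{n-2,n}$ becomes $-\frac{2}{n}$ after multiplying by $4$. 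There is essentially no obstacle here; the entire content of the corollary is housed in the lemma (whose proof is the routine but lengthy ``compute''), and the corollary is merely the specialization $a_1=4$, $a_2=-2/n$ together with the quadratic-scaling identity for $CYB$.
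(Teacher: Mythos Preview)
Your argument is correct and follows exactly the paper's approach: specialize the lemma to obtain $CYB_4(4r_{n-2,n})=0$ and then rescale. Your parameter values $a_1=4$, $a_2=-\tfrac{2}{n}$ are in fact the ones that make the lemma's operator equal to $4r_{n-2,n}$; the paper records $a_1=1$, $a_2=\tfrac{2}{n}$, which appears to be a slip, though of course the lemma is stated for all $a_1,a_2\in k$ so either choice yields a valid instance.
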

\begin{proof}  Setting $a_{1}=1$ and $a_2=\frac{2}{n}$ in Eqn. \ref{3434} coincides with $4r_{n-2,n}$. Thus, $CYB_4(4r_{n-2,n})=0$ (equivalently $CYB_{\frac{1}{4}}(r_{n-2,n})=0$).
\end{proof}

 \section{A Note on the Gerstenhaber-Giaquinto Conjecture: the case when m=n-2}\label{GG Conj Section}
 
In this section, we prove the Gerstenhaber-Giaquinto conjecture for the case when $m=n-2$ (and $n$ is necessarily odd).
First, we define the following:
\begin{align}
E_1 &:= \frac{1}{2}(x_1^{-1}\partial_1+x_2^{-1}\partial_2)-\frac{1}{4}(x_1^{-2}(1-\xi_1)+x_2^{-2}(1-\xi_2))\in\widetilde{H},\\
E_2 &:= \frac{1}{2}\left(x_1+x_2-x_1\xi_1-x_2\xi_2\right)\in\widetilde{H}.
 \end{align}

The truncated polynomial ring $TruncPol_n(k[x_1,x_2])$ is stable under the actions of $E_1$ and $E_2$. Under the vector space isomorphism $TruncPol_n(k[x_1,x_2])\cong k^n\otimes k^n$, $E_1$ and $E_2$ are identified with the matrices $\sum_{j=1}^{n-2}\left\lfloor \frac{j+1}{2}\right\rfloor e_{j,j+2}\in\mathfrak{sl}_n$ and $e_{n,n-1}+e_{n-2,n-3}+\cdots +e_{3,2}\in\mathfrak{sl}_n$ respectively. Let $\mathfrak{n}$ denote the Lie algebra generated by $E_1$ and $E_2$. One may readily check that the commutator $[E_1,E_2]$ is nonzero and commutes with $E_1$ and $E_2$. Hence,  $\mathfrak{n}$ is isomorphic to the Heisenberg Lie algebra. The defining relations of the universal enveloping algebra $U(\mathfrak{n})$ are $E_1^2E_2-2E_1E_2E_1+E_1E_2^2=0$ and $E_2^2E_1-2E_2E_1E_2+E_2E_1^2=0$. Since $r_{n-2,n}\in\mathfrak{sl}_n\wedge\mathfrak{sl}_n$, the adjoint action of $\mathfrak{sl}_n$ on $\mathfrak{sl}_n\wedge\mathfrak{sl}_n$ gives us a natural way to generate a $U(\mathfrak{n})$-module. A straightforward computation will verify the following lemma.

\begin{lemma}\label{Un module structure} 
$ $
\begin{enumerate}
\item The $U(\mathfrak{n})$-module generated by $r_{n-2,n}$ has a $k$-basis $\{r_{n-2,n},v_1,v_2,v_3,v_4\}$, where the vectors $v_1,...,v_4$ are given by the formulas
\begin{align}
\nonumber v_1&=\frac{(x_1x_2)^{-1}}{4}\left(\Delta-\xi_1\Delta\xi_2 +\xi_1-\xi_2 \right)-\frac{1}{4n} \left(  2(x_1^{-1}\partial_1-x_2^{-1}\partial_2)-x_1^{-2}(1-\xi_1)+x_2^{-2}(1-\xi_2)               \right)                        ,\\
\nonumber v_2&=\frac{1}{4}\left(x_2\xi_1-x_1\xi_2+(x_1-x_2)\xi_1\xi_2+\frac{1}{n}(x_1(1-\xi_1)-x_2(1-\xi_2))\right),\\
\nonumber v_3&=\frac{1}{2}(x_1x_2)^{-1}\left(x_1\xi_1-x_2\xi_2-(x_1-x_2)\xi_1\xi_2+\frac{1}{n}\left(x_2(1-\xi_1)-x_1(1-\xi_2)\right)\right),\\
\nonumber v_4&=\frac{1}{2}\left(\frac{x_2}{x_1}-\frac{x_1}{x_2}\right)(1-\xi_1-\xi_2+\xi_1\xi_2).
\end{align}
\item The $U(\mathfrak{n})$-module structure for $U(\mathfrak{n})r_{n-2,n}$ is given by  $E_1.r_{n-2,n}=v_1$, $E_1.v_2=\frac{1}{2}v_3$, $E_2.r_{n-2,n}=v_2$, $E_2.v_1=v_3$, $E_2.v_3=v_4$, and $E_1.v_1=E_1.v_3=E_1.v_4=E_2.v_2=E_2.v_4=0$.
\end{enumerate}
\end{lemma}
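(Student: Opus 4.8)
The plan is to verify Lemma \ref{Un module structure} by direct computation in the algebra $\widetilde{H}$, using the explicit formula for $r_{n-2,n}$ established in Theorem \ref{m=2 case}, namely $r_{n-2,n}=g_1(x_1y_1-x_2y_2)+g_2(x_1\partial_1-x_2\partial_2)+\left(\frac{x_2}{x_1}-\frac{x_1}{x_2}\right)g_3+g_4$. Since $\mathfrak{n}$ is the Heisenberg algebra generated by $E_1$ and $E_2$ (with $[E_1,E_2]$ central), the $U(\mathfrak{n})$-module $U(\mathfrak{n})r_{n-2,n}$ is obtained by repeatedly applying $\mathrm{ad}(E_1)$ and $\mathrm{ad}(E_2)$ to $r_{n-2,n}$, where $E_1,E_2$ act on $\mathfrak{sl}_n\wedge\mathfrak{sl}_n$ via the adjoint action and, equivalently, as commutators of operators on $TruncPol_n(k[x_1,x_2])$. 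So first I would fix the convention that the $U(\mathfrak{n})$-action is $X.v = \mathrm{ad}(X)(v) = [X,v]$ as operators, which is legitimate because $r_{n-2,n}\in\mathfrak{sl}_n\wedge\mathfrak{sl}_n$ and $E_1,E_2\in\mathfrak{sl}_n$, so all brackets land back in $\mathfrak{sl}_n\wedge\mathfrak{sl}_n$ (this must be checked, but it follows from the matrix identifications of $E_1,E_2$ already recorded, together with the fact that $r_{n-2,n}$ has trace-zero tensor legs).

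The computational core is then organized as a finite BFS in the module: compute $[E_1,r_{n-2,n}]$ and $[E_2,r_{n-2,n}]$, match them against the claimed $v_1$ and $v_2$; then compute $[E_1,v_2]$, $[E_2,v_1]$, $[E_1,v_1]$, $[E_2,v_2]$; continue with $[E_2,v_3]$, $[E_1,v_3]$; and finally $[E_1,v_4]$, $[E_2,v_4]$. The key mechanical inputs are the commutation relations in $\widetilde{H}$ that are already spelled out in the excerpt: $[\partial_j,x_\ell]=\delta_{j\ell}$, $\partial_j\sigma=\sigma\partial_{3-j}$, $\partial_j\xi_\ell=(-1)^{\delta_{j\ell}}\xi_\ell\partial_j$, $\sigma x_i = x_{3-i}\sigma$, $\xi_i x_i = \omega^{-1}x_i\xi_i$ (with $\omega=-1$), $\xi_i x_j = x_j\xi_i$ for $i\neq j$, and the analogous relations for $x_i^{-1}$. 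One convenient intermediate step is to rewrite $E_1$ so that the divided-difference-like pieces are transparent: note $\frac{1}{2}(x_1^{-1}\partial_1+x_2^{-1}\partial_2) - \frac14(x_1^{-2}(1-\xi_1)+x_2^{-2}(1-\xi_2))$ already has each summand acting only on one variable, which makes commutators with monomials in $x_1,x_2$ and with $\sigma,\xi_1,\xi_2$ straightforward; similarly $E_2=\frac12(x_1(1-\xi_1)+x_2(1-\xi_2))$ is a sum of two one-variable operators twisted by group elements. I would do the $E_2$-commutators first since $E_2$ is purely a multiplication-type operator (no derivatives), so $[E_2,\cdot]$ only ever differentiates the other factor and conjugates group elements — these are the cleanest. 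Throughout I would carry the two deformation parameters $\kappa, {\bf c}_0, {\bf c}_1$ symbolically only as far as needed, but since part 2 of Theorem \ref{the action}'s corollary pins down $r_{n-2,n}$ with specific values baked into $g_1,\dots,g_4$, in practice the $y_i$ contributions reorganize into the $\Delta$, $\xi_1\Delta\xi_2$, and $(x_2/x_1 - x_1/x_2)$ pieces of the closed form in the proof of Theorem \ref{m=2 case}, so I would work from that closed form rather than from $y_1,y_2$ directly.

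The main obstacle I anticipate is purely bookkeeping volume: there are ten commutators to compute, each involving a sum of several operator terms built from $x_i^{\pm1}$, $\partial_i$, $\sigma$, $\xi_i$, and the outputs $v_1,\dots,v_4$ are each four-to-six term expressions with $1/n$ corrections that must match on the nose. The subtle points where an error is most likely are (i) sign tracking in $\partial_j\xi_\ell = (-1)^{\delta_{j\ell}}\xi_\ell\partial_j$ when commuting $E_1$ past the $\xi$-twisted terms, and (ii) confirming that all of $v_1,\dots,v_4$ genuinely preserve $TruncPol_n(k[x_1,x_2])$ despite the presence of $x_1^{-1},x_2^{-1}$ — this holds because $\Delta$, $\xi_i\Delta\xi_j$, $x_1^{-1}\partial_1$, and $(x_2/x_1)(1-\xi_1-\xi_2+\xi_1\xi_2)$ each lower or preserve total degree appropriately and annihilate the obstructing boundary monomials, exactly as for $r_{n-2,n}$ itself. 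Once all ten brackets are verified and the closure relations $E_1.v_1 = E_1.v_3 = E_1.v_4 = E_2.v_2 = E_2.v_4 = 0$ checked, the five-dimensionality of $U(\mathfrak{n})r_{n-2,n}$ and the stated module structure follow immediately, since no further vectors are produced.
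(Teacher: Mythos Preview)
Your computational strategy is essentially the same as the paper's: verify the ten adjoint actions by direct calculation using the commutation relations in $\widetilde{H}$, working from the closed form of $r_{n-2,n}$ established in Theorem~\ref{m=2 case}, and conclude that $\{r_{n-2,n},v_1,v_2,v_3,v_4\}$ spans the module because the action closes on this set.

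There is one genuine gap. Your final sentence asserts that ``the five-dimensionality of $U(\mathfrak{n})r_{n-2,n}$ \dots\ follows immediately, since no further vectors are produced.'' But closure under $E_1,E_2$ only gives you that the five vectors \emph{span}; it does not give linear independence, and part~(1) of the lemma asserts they form a $k$-\emph{basis}. You need a separate argument that no nontrivial $k$-linear relation holds among $r_{n-2,n},v_1,v_2,v_3,v_4$. The paper handles this cleanly via the $\mathbb{Z}$-grading on $\widetilde{H}$ with $\deg(x_i)=1$, $\deg(y_i)=\deg(\partial_i)=-1$, $\deg(\xi_i)=\deg(\sigma)=0$: one reads off $\deg(v_1)=-2$, $\deg(v_2)=1$, $\deg(v_3)=-1$, and $\deg(r_{n-2,n})=\deg(v_4)=0$, so the only possible dependence would be between $r_{n-2,n}$ and $v_4$, and these are visibly not scalar multiples (e.g.\ $r_{n-2,n}$ contains a $\partial$-term while $v_4$ does not). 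You should add this step, or an equivalent one, to complete the proof of part~(1).
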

\begin{proof}
Parts (1) and (2) can be obtained by using the commutation relations among the generators of $\widetilde{H}$. Once (2) is established, it follows that $\{r_{n-2,n},v_1,v_2,v_3,v_4\}$ is a spanning set for $U(\mathfrak{n})r_{n-2,n}$. To obtain linearly independence, observe that $\widetilde{H}$ is $\mathbb{Z}$-graded with $\deg(x_i)=1$, $\deg(y_i)=\deg(\partial_i)=-1$, $\deg(\xi_i)=0$ for $i=1,2$.  Linear independence follows from the fact that $\deg(v_1)=-2$, $\deg(v_2)=1$, $\deg(v_3)=-1$ and $\deg(r_{n-2,n})=\deg(v_4)=0$ (but $r_{n-2,n}$ is not a $k$-multiple of $v_4$).
\end{proof}

Furthermore, we have the following.

\begin{proposition}\label{k-linear combo} Any $k$-linear combination of $v_1$, $v_2$, $v_3$, $v_4$ is a triangular $r$-matrix.
\end{proposition}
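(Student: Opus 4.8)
The plan is to exploit the $U(\mathfrak{n})$-module structure established in Lemma \ref{Un module structure} together with Theorem \ref{constructing boundary solutions}, viewing each $v_i$ (up to scalar) as the top-degree term of an exponentiated nilpotent acting on the quasitriangular $r$-matrix $r_{n-2,n}$. Since $\mathfrak{n}$ is the Heisenberg Lie algebra sitting inside $\mathfrak{sl}_n$, for any $X\in\mathfrak{n}$ the operator $\mathrm{ad}_X$ acts nilpotently on the finite-dimensional module $U(\mathfrak{n})r_{n-2,n}$, so $\exp(tX).r_{n-2,n}=\sum_{k\ge 0}\frac{t^k}{k!}(\mathrm{ad}_X)^k r_{n-2,n}$ is a polynomial in $t$. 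Because conjugation by $\exp(tX)$ is an inner automorphism of $\mathfrak{sl}_n$, each $\exp(tX).r_{n-2,n}$ is again a quasitriangular $r$-matrix equivalent to $r_{n-2,n}$, hence $\langle\langle \exp(tX).r_{n-2,n},\exp(tX).r_{n-2,n}\rangle\rangle=\langle\langle r_{n-2,n},r_{n-2,n}\rangle\rangle$; Theorem \ref{constructing boundary solutions} then forces the highest-degree-in-$t$ coefficient to be a (boundary, hence) triangular $r$-matrix. So the core of the argument is to produce, for each target linear combination $v = c_1v_1+c_2v_2+c_3v_3+c_4v_4$, an element $X\in\mathfrak{n}$ whose exponential picks out $v$ as the leading term.

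Concretely, I would proceed case by case along the filtration of $U(\mathfrak{n})r_{n-2,n}$ by the action of $E_1,E_2$. From Lemma \ref{Un module structure}(2), the socle-type top pieces are reached as follows: $v_4 = E_2.v_3 = E_2 E_2 E_1. r_{n-2,n}$ (with $E_1,v_1$ further killed), $v_3 = 2E_1.v_2 = 2E_1E_2.r_{n-2,n}$ and also $v_3=E_2.v_1=E_2E_1.r_{n-2,n}$, while $v_1=E_1.r_{n-2,n}$ and $v_2=E_2.r_{n-2,n}$ are first-order. The key observation is that $E_1v_1=E_1v_3=E_1v_4=E_2v_2=E_2v_4=0$, so $v_1,v_3,v_4$ are $E_1$-highest and $v_2,v_4$ are $E_2$-highest; this makes it easy to realize each $v_i$ as a genuine top-degree term rather than a middle term. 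For a single $v_i$: take $X = E_1$ to get $\exp(tE_1).r_{n-2,n} = r_{n-2,n}+tv_1$ (since $E_1v_1=0$), whose leading term $v_1$ is thus a boundary $r$-matrix; take $X=E_2$, noting $E_2^2.r_{n-2,n}=E_2.v_2=0$, to get leading term $v_2$; for $v_3$ use $X = E_1+E_2$ or compose exponentials $\exp(t E_2)\exp(tE_1)$ and read off the top-degree coefficient, which will be a nonzero multiple of $v_3$ plus possibly $v_4$, and then subtract a known boundary term; for $v_4$ use $X=E_2$ on the $r$-matrix $r_{n-2,n}+v_1$ already known to have the right Schouten bracket, or iterate $\exp(sE_2)\exp(tE_1)$ and extract the $st^?$ coefficient. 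For a general linear combination, I would instead exponentiate $X = t_1 E_1 + t_2 E_2$ (or a product of three exponentials in $E_1,E_2,[E_1,E_2]$) and observe that the collection of all top-degree terms, as the $t_i$ vary over $k$, spans $\mathrm{span}_k\{v_1,v_2,v_3,v_4\}$; then argue that the set of boundary (triangular) $r$-matrices obtained this way is closed under the relevant scalings, or more cleanly, show directly that $\langle\langle\cdot,\cdot\rangle\rangle$ vanishes on every element of $\mathrm{span}_k\{v_1,\dots,v_4\}$ by a single computation using bilinearity of the Schouten bracket and the fact that it vanishes (to top order) on each generator.

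Here is the cleanest route I would actually write up. Set $r(t_1,t_2,t_3):=\exp(t_3[E_1,E_2])\exp(t_2 E_2)\exp(t_1 E_1).r_{n-2,n}$, a polynomial in $t_1,t_2,t_3$ with values in quasitriangular $r$-matrices all equivalent to $r_{n-2,n}$; expanding using Lemma \ref{Un module structure}(2), every coefficient of a monomial $t_1^a t_2^b t_3^c$ is an explicit $k$-linear combination of $r_{n-2,n},v_1,v_2,v_3,v_4$, and the coefficients of the \emph{maximal} monomials are exactly (scalar multiples of) $v_1,v_2,v_3,v_4$ and linear combinations thereof — in particular one checks the set of such leading coefficients spans $\mathrm{span}_k\{v_1,v_2,v_3,v_4\}$. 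Since $\langle\langle r(t_1,t_2,t_3),r(t_1,t_2,t_3)\rangle\rangle$ is constant in $(t_1,t_2,t_3)$ (equal to $\langle\langle r_{n-2,n},r_{n-2,n}\rangle\rangle$), the highest-degree coefficient of $r(t_1,t_2,t_3)$ in each variable is a triangular $r$-matrix by Theorem \ref{constructing boundary solutions}; combining finitely many such and using that $CYB_0$ is a quadratic form whose associated symmetric bilinear form must then vanish on all cross terms among $v_1,\dots,v_4$, one concludes $CYB_0$ vanishes identically on $\mathrm{span}_k\{v_1,\dots,v_4\}$, i.e. every $k$-linear combination of $v_1,v_2,v_3,v_4$ is triangular.

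The main obstacle I anticipate is the bookkeeping needed to guarantee that the leading coefficients genuinely span all of $\mathrm{span}_k\{v_1,v_2,v_3,v_4\}$ (rather than some proper subspace) and that the "cross-term" polarization argument actually closes — i.e. that vanishing of $CYB_0$ on enough individual vectors and their pairwise sums forces vanishing on the whole span. Because $CYB_0$ is quadratic, showing it vanishes on each $v_i$ and on each $v_i+v_j$ suffices, so the real work is arranging the exponentials so these particular vectors all appear as leading terms; the grading argument from the proof of Lemma \ref{Un module structure} (the $v_i$ lie in distinct graded components, except $r_{n-2,n}$ and $v_4$ which share degree $0$) is what keeps the leading-term extraction unambiguous and is the tool I would lean on to push the argument through without a long direct computation.
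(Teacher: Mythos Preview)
Your core strategy coincides with the paper's: exponentiate nilpotents in $\mathfrak{n}$ acting on $r_{n-2,n}$, use the invariance of $\langle\langle\,\cdot\,,\,\cdot\,\rangle\rangle$ under inner automorphisms to obtain a polynomial identity in the deformation parameters, and extract enough vanishing coefficients to kill all ten cross-terms $\langle\langle v_i,v_j\rangle\rangle+\langle\langle v_j,v_i\rangle\rangle$. Where you differ is in packaging. The paper does \emph{not} invoke Theorem~\ref{constructing boundary solutions} or leading-term arguments at all; instead it writes down two separate two-parameter families, $r'=e^{uE_2}e^{tE_1}.r_{n-2,n}$ and $r''=e^{uE_1}e^{tE_2}.r_{n-2,n}$, expands $\langle\langle r',r'\rangle\rangle-\langle\langle r_{n-2,n},r_{n-2,n}\rangle\rangle$ and the analogous expression for $r''$ as polynomials in $t,u$, labels the resulting coefficients $A_1,\dots,A_{11}$ and $B_1,\dots,B_8$, and then explicitly identifies each cross-term in $\langle\langle av_1+bv_2+cv_3+dv_4,\,av_1+bv_2+cv_3+dv_4\rangle\rangle$ as a concrete linear combination of the $A_i$'s and $B_j$'s. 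The point of using \emph{both} orderings is precisely to get enough independent equations; one ordering alone does not separate all ten cross-terms.

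There is a genuine gap in your leading-term/polarization route that you should be aware of. You propose to show $CYB_0(v_i)=0$ and $CYB_0(v_i+v_j)=0$ by realizing each such vector as the top-degree term of $\exp(tX).r_{n-2,n}$ for some $X\in\mathfrak{n}$. But this fails: for any $X=aE_1+bE_2+c[E_1,E_2]$ with $ab\neq 0$ the expansion of $\exp(tX).r_{n-2,n}$ is cubic in $t$ with leading term a multiple of $v_4$, so combinations like $v_1+v_2$ never occur as leading terms. Theorem~\ref{constructing boundary solutions} therefore only delivers $CYB_0=0$ on a proper subset of the span, and the polarization argument does not close from that alone. Your ``cleanest route'' --- extracting all monomial coefficients from a multi-parameter identity rather than just leading terms --- \emph{does} close the gap, and that is exactly what the paper does; but you should drop the appeal to Theorem~\ref{constructing boundary solutions} and instead carry out the coefficient-matching explicitly, being prepared (as the paper is) to use more than one ordering of the exponentials to get enough equations.
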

\begin{proof}
For a pair of linear operators $L^\prime,L^{\prime\prime}$ on the vector space $k[x_1^{\pm 1},x_2^{\pm 1}]\cong k[x^{\pm1}]\otimes k[x^{\pm1}]$, We introduce the notation $\langle\langle L^\prime,L^{\prime\prime}\rangle\rangle$ to mean
\[ \langle\langle L^\prime,L^{\prime\prime}\rangle\rangle := [L^\prime_{12},L^{\prime\prime}_{13}] +[L^\prime_{12},L^{\prime\prime}_{23}]+[L^\prime_{13},L^{\prime\prime}_{23}]\in End_k(k[x_1^{\pm1},x_2^{\pm1},x_3^{\pm3}])\cong End_k(k[x^{\pm1}]^{\otimes 3})\]
(in particular, $\langle\langle L,L\rangle\rangle = CYB_0(L)$ for all $L\in End_k(k[x_1^{\pm1},x_2^{\pm1}])$).
Let $r^\prime = e^{uE_2}e^{tE_1}r_{n-2,n}$ and $r^{\prime\prime} = e^{uE_1}e^{tE_2}r_{n-2,n}$. From Lemma \ref{Un module structure}, it follows that 
\[r^\prime=r_{n-2,n}+uv_1+tv_2+tuv_3+\frac{1}{2}t^2uv_4\]
 and 
 \[r^{\prime\prime} = r_{n-2,n}+uv_1+tv_2  + \frac{1}{2}tuv_3.\]
  Since $r^\prime$, $r^{\prime\prime}$ and $r_{n-2,n}$ are equivalent $r$-matrices, we have $\left<\left<r^\prime,r^\prime\right>\right>-\left<\left<r_{n-2,n},r_{n-2,n}\right>\right>=0$ and $\left<\left<r^{\prime\prime},r^{\prime\prime}\right>\right>-\left<\left<r_{n-2,n},r_{n-2,n}\right>\right>=0$. Expanding $\left<\left<r^\prime,r^\prime\right>\right>-\left<\left<r_{n-2,n},r_{n-2,n}\right>\right>$ and $\left<\left<r^{\prime\prime},r^{\prime\prime}\right>\right>-\left<\left<r_{n-2,n},r_{n-2,n}\right>\right>$ yields expressions of the form 
\begin{equation*}
0=uA_1+tA_2+tuA_3+t^2uA_4+u^2A_5+tu^2A_6+t^2u^2A_7+t^2A_8+t^3uA_9+t^4u^2A_{10}+t^3u^2A_{11}
\end{equation*}
and
\begin{equation*}
0=uB_1+tB_2+tuB_3+t^2uB_4+u^2B_5+tu^2B_6+t^2u^2B_7+t^2B_8
\end{equation*}
respectively, where $A_1,...,A_{11}$ and $B_1,...,B_8$ are certain cross-terms, e.g. $A_1 = \left<\left<r_{n-2,n},v_1\right>\right>+\left<\left<v_1,r_{n-2,n}\right>\right>$, $A_2=\left<\left<r_{n-2,n},v_2\right>\right>+\left<\left<v_2,r_{n-2,n}\right>\right>$, etc. It follows that $A_1=\cdots =A_{11}=0$ and $B_1=\cdots =B_8=0$. For $a,b,c,d\in k$, let $r_{abcd} = av_1+bv_2+cv_3+dv_4$. We have
\begin{align*}
\left<\left<r_{abcd},r_{abcd}\right>\right> 
& = a^2\left<\left<v_1,v_1\right>\right> + ab\left(\left<\left<v_1,v_2\right>\right>+\left<\left<v_2,v_1\right>\right>\right)+\cdots +d^2\left<\left<v_4,v_4\right>\right>\\
 & =  a^2A_5  + ab (2B_3-A_3) +acA_6+ad(2A_7-8B_7)+b^2A_8\\
 &\phantom{==}+2bcB_6+2bdA_0+4c^2B_7+2cdA_{11}+4d^2A_{10}\\
 & = 0.
\end{align*}
 \end{proof}

We conclude this section by showing that the Gerstenhaber-Giaquinto conjecture holds for the case when $m=n-2$. For parameters $u,t\in k$, define first the following:
\begin{equation}
b_{CG}(u,t) : = uv_1+tv_2+tuv_3+\frac{1}{2}t^2uv_4\in\widetilde{H}. 
\end{equation}
The vector space $TruncPol_n(k[x_1,x_2])$ is stable under the action of $b_{CG}(u,t)$, hence we can identify $b_{CG}(u,t)$ with an element of $\mathfrak{sl}_n\wedge\mathfrak{sl}_n$.  

The following theorem proves the Gerstenhaber-Giaquinto conjecture for the case when $m=n-2$. The proof of part 1 of Thm. \ref{GGConj} below was communicated to us by Anthony Giaquinto \cite{G_private_communication}.

\begin{theorem}\label{GGConj}
$ $
\begin{enumerate}
\item Let $u,t\in k$. Then 
\begin{equation}
e^{uE_2}e^{tE_1}.r_{n-2,n} = r_{n-2,n}+b_{CG}(u,t) .
\end{equation}
Hence  $b_{CG}(u,t) $ is a triangular $r$-matrix. For $u$ and $t$ not both zero, $b_{CG}(u,t)$ lies in the boundary component of the Cremmer-Gervais $r$-matrix $r_{n-2,n}$.
\item For $u,t\neq 0$, the carrier of $b_{CG}(u,t) $ (viewed as an element of $\mathfrak{sl}_n\wedge\mathfrak{sl}_n$) is the maximal parabolic subalgebra $\mathfrak{p}_{n-2,n}\subseteq\mathfrak{sl}_n$.
\item The Frobenius functional associated to $b_{CG}(u,t) $ is 
\[u^{-1}\left(e_{13}^*+e_{24}^*+\cdots +e_{n-2,n}^*\right) -t e_{n,n-1}^* +2 (e_{n-1,n-1}-e_{nn})^*\in \mathfrak{p}_{n-2,n}^*.\]
\end{enumerate}

\end{theorem}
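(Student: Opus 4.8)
The plan is to dispatch the three parts in order, with parts 2 and 3 both resting on an explicit rewriting of $b_{CG}(u,t)$ as an element of $\mathfrak{sl}_n\otimes\mathfrak{sl}_n$. For part 1, the identity $e^{uE_2}e^{tE_1}.r_{n-2,n}=r_{n-2,n}+b_{CG}(u,t)$ is a finite computation: since $E_1$ and $E_2$ act locally nilpotently on the $5$-dimensional $U(\mathfrak n)$-module $U(\mathfrak n)r_{n-2,n}$ of Lemma \ref{Un module structure} ($E_1$ annihilates $v_1,v_3,v_4$ and $E_2$ annihilates $v_2,v_4$), the exponential series terminate, and one reads off $e^{tE_1}.r_{n-2,n}=r_{n-2,n}+tv_1$ and then applies $e^{uE_2}$ using $E_2.v_1=v_3$, $E_2.v_3=v_4$. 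Triangularity of $b_{CG}(u,t)$ is then immediate from Proposition \ref{k-linear combo}. It remains to show $b_{CG}(u,t)$ lies in the boundary component of $r_{n-2,n}$, i.e. in $\overline{SL_n.r_{n-2,n}}$, mirroring Theorem \ref{GG 5.9}; for this I would run a degeneration argument in the spirit of Theorem \ref{constructing boundary solutions}, producing a curve $g_s\in SL_n$ — built by conjugating the unipotents $e^{sE_1},e^{sE_2}$ by a one-parameter torus $D_s$ chosen so that $\mathrm{Ad}(D_s)$ rescales $E_1,E_2$ while contracting the off-diagonal part of $r_{n-2,n}$ — with $[g_s.r_{n-2,n}]\to[b_{CG}(u,t)]$ in $\mathbb P(\mathfrak{sl}_n\wedge\mathfrak{sl}_n)$ as $s\to 0$. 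This is the delicate point, and exactly the one credited to Giaquinto, because the naive rescalings $\lambda\,e^{uE_2}e^{tE_1}.r_{n-2,n}$ never converge to $b_{CG}(u,t)$: the coefficients of $v_1,\dots,v_4$ in $b_{CG}(u,t)$ satisfy multiplicative relations (the $v_3$-coefficient is the product of the $v_1$- and $v_2$-coefficients, and the $v_4$-coefficient is again controlled by these) that are incompatible with linear scaling, so genuinely non-Heisenberg elements of $SL_n$ must enter the limiting family.

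For part 2, I would first convert $b_{CG}(u,t)=uv_1+tv_2+tuv_3+\tfrac12t^2uv_4$ into an explicit tensor in $\mathfrak{sl}_n\otimes\mathfrak{sl}_n$ by letting the operators $v_i$ of Lemma \ref{Un module structure} act on $TruncPol_n(k[x_1,x_2])$ and using $x_1^{j-1}x_2^{\ell-1}\leftrightarrow e_j\otimes e_\ell$ (each $v_i$ preserves $TruncPol_n$, the group-algebra factors $1-\xi_i$ forcing the parity conditions that prevent overflow), keeping the arithmetic organized via Theorem \ref{m=2 case}. The carrier claim then splits into two steps: (i) check $b_{CG}(u,t)\in\mathfrak p_{n-2,n}\otimes\mathfrak p_{n-2,n}$, so that every leg $(\xi\otimes1)b_{CG}(u,t)$ lies in $\mathfrak p_{n-2,n}$ and hence the carrier is contained in $\mathfrak p_{n-2,n}$; and (ii) for $u,t\neq 0$, exhibit enough legs to span all of $\mathfrak p_{n-2,n}$ — either directly, by reading off from the $v_2$- and $v_3$-parts a full set of root vectors outside the deleted block together with $e_{n-1,n-1}-e_{nn}$ from the diagonal part, or by computing the tensor rank of $b_{CG}(u,t)$, observing it equals $\dim\mathfrak p_{n-2,n}=n^2-2n+3$, and invoking the fact that a triangular $r$-matrix is nondegenerate on its carrier. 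Either route identifies the carrier as $\mathfrak p_{n-2,n}$.

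For part 3, given the carrier $\mathfrak f=\mathfrak p_{n-2,n}$ from part 2 and the isomorphism $\check r:\mathfrak f^*\to\mathfrak f$, $\xi\mapsto(\xi\otimes1)b_{CG}(u,t)$, the cocycle is $F(X,Y)=\langle\check r^{-1}X,Y\rangle$ and a Frobenius functional $\eta$ is characterized by $F(X,Y)=\eta([X,Y])$ for all $X,Y\in\mathfrak p_{n-2,n}$, equivalently by $(\mathrm{ad}^{*}_X\eta\otimes1)\,b_{CG}(u,t)=-X$ for every $X$. I would take $\eta=u^{-1}(e_{13}^{*}+e_{24}^{*}+\cdots+e_{n-2,n}^{*})-t\,e_{n,n-1}^{*}+2(e_{n-1,n-1}-e_{nn})^{*}$ exactly as in the statement and verify this identity on the standard basis of $\mathfrak p_{n-2,n}$: for each basis vector $X$ one reads $\mathrm{ad}^{*}_X\eta$ off the structure constants of $\mathfrak p_{n-2,n}$, pairs it against the first leg of the explicit tensor $b_{CG}(u,t)$ from part 2, and checks the result is $-X$; nondegeneracy of $F$ is automatic once the carrier is pinned down, so this verification shows $\eta$ is a Frobenius functional (equivalently, one may produce an $F$-dual pair of bases $\{p_i\},\{q_i\}$ of $\mathfrak p_{n-2,n}$ and check $\sum_i p_i\wedge q_i=b_{CG}(u,t)$). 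The bulk of the remaining effort is this last bookkeeping — matching the three groups of terms in $\eta$ against the $v_1,v_4$-contributions, the $v_2$-contribution, and the diagonal piece of $b_{CG}(u,t)$ — which is routine but lengthy; the one genuine obstacle in the theorem is the boundary-membership assertion in part 1.
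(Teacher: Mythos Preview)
Your proposal is correct and follows essentially the same route as the paper. The paper likewise converts $v_1,\dots,v_4$ into explicit elements of $\mathfrak{sl}_n\wedge\mathfrak{sl}_n$ via the action on $TruncPol_n$, but then handles parts (2) and (3) in one stroke by writing down the maps $\check r:\mathfrak p_{n-2,n}^*\to\mathfrak p_{n-2,n}$ and $\check r^{-1}:\mathfrak p_{n-2,n}\to\mathfrak p_{n-2,n}^*$ explicitly on the basis $\{e_{j\ell}^*,h_j^*\}$ and $\{e_{j\ell},h_j\}$; the existence of $\check r^{-1}$ immediately gives the carrier, and the Frobenius functional $\varphi$ is then verified by checking $\varphi([X,Y])=\langle X,\check r^{-1}(Y)\rangle$. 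This is equivalent to, but more economical than, your containment-plus-spanning (or rank-count) argument for part (2) followed by a separate $\mathrm{ad}^*$-verification for part (3). For part (1) the paper's written proof is actually terser than yours: it invokes Proposition~\ref{k-linear combo} for triangularity and does not spell out the degeneration placing $b_{CG}(u,t)$ in the boundary component, that argument having been supplied by Giaquinto; your discussion of why a torus conjugation is needed beyond the Heisenberg unipotents is a reasonable elaboration of that point.
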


\begin{proof}
Proposition \ref{k-linear combo} implies $b_{CG}(u,t) $ is a triangular $r$-matrix. For part (2), we compute. As operators on $k[x_1^{\pm 1},x_2^{\pm 1}]$, we have
\begin{align*}
v_1.x_1^jx_2^\ell &= \sum_{N=0}^{\left\lfloor\frac{j-\ell-2}{2}\right\rfloor} x_1^{\ell+2N}x_2^{j-2N-2}-\sum_{N=0}^{\left\lfloor\frac{\ell-j-2}{2}\right\rfloor}x_1^{\ell-2N-2}x_2^{j+2N}-\frac{1}{n}\left(\left\lfloor\frac{j}{2}\right\rfloor x_1^{j-2}x_2^\ell-\left\lfloor\frac{\ell}{2}\right\rfloor x_1^jx_2^{\ell-2}\right)\\
&\phantom{===}+([j\text{ even, }\ell\text{ odd, }j>\ell]-[j\text{ odd, }\ell\text{ even, }j<\ell])x_1^{j-1}x_2^{\ell-1},\\
v_2.x_1^jx_2^\ell &=\frac{1}{2}\left(     [\ell\text{ odd}]\left((-1)^j-\frac{1}{n}\right)x_1^jx_2^{\ell+1}-[j\text{ odd}]\left((-1)^\ell-\frac{1}{n}\right)x_1^{j+1}x_2^{\ell}\right),\\
v_3.x_1^jx_2^\ell &=  [\ell\text{ odd}]\left((-1)^j-\!\frac{1}{n}\right)x_1^jx_2^{\ell-1}-[j\text{ odd}]\left((-1)^\ell-\!\frac{1}{n}\right)x_1^{j-1}x_2^\ell,\\
v_4.x_1^jx_2^\ell &=2[j\text{ odd}, \ell\text{ odd}]\left(x_1^{j-1}x_2^{\ell+1}-x_1^{j+1}x_2^{\ell-1}\right),
\end{align*}
where $[{\mathcal P}]=1$ if the statement ${\mathcal P}$ is true and $[{\mathcal P}]=0$ if ${\mathcal P}$ is false.
Restricting these operators to the truncated polynomial ring $TruncPol_n(k[x_1,x_2])$ and then using the isomorphism $TruncPol_n(k[x_1,x_2])\cong k^n\otimes k^n$ translates the above formulas for $v_1,...,v_4$ into the following elements of $\mathfrak{sl}_n\wedge\mathfrak{sl}_n$:
\begin{align*}
v_1 &=  2\left(\sum_{1\leq \ell<j\leq n}\sum_{N=1}^{\left\lfloor\frac{j-\ell-1}{2}\right\rfloor}e_{\ell+2N-2,j}\wedge e_{j-2N,\ell}+\hspace{-.3cm}\sum_{\tiny\begin{array}{c}1\leq \ell<j\leq n\\ j\text{ odd, }\ell\text{ even}\end{array}}\hspace{-.3cm}e_{j-1,j}\wedge e_{\ell-1,\ell}+\sum_{j=1}^{n-2} e_{j,j+2}\wedge h_j\right),\\
v_2 &= 2E^-\wedge h_{n-1},\\
v_3 &= 4E^+\wedge h_{n-1},\\
v_4 &= 4E^+\wedge E^-,
\end{align*}
where $E^+ = e_{12}+e_{34}+\cdots +e_{n-2,n-1}$, $E^- = e_{n,n-1}+e_{n-2,n-3}+\cdots + e_{32}$, and 
\begin{equation*}\displaystyle{h_j := \sum_{N=0}^{\left\lfloor\frac{j-1}{2}\right\rfloor }e_{j-2N,j-2N}-\frac{1}{n}\left\lfloor\frac{j+1}{2}\right\rfloor \left(e_{1,1}+\cdots +e_{n,n}\right)}\end{equation*} for $j=1,...,n-1$. 
Following the notation from Section \ref{Frobenius and Quasi-Frobenius Lie algebras}, one can verify that the linear maps 
\[\check{r}:\mathfrak{p}_{n-2,n}^*\to\mathfrak{p}_{n-2,n}\]
 and 
\[\check{r}^{-1}:\mathfrak{p}_{n-2,n}\to\mathfrak{p}_{n-2,n}^*\]
are given by
\begin{align*}
\check{r}:e_{j\ell}^*\longmapsto &\begin{cases} u (e_{\ell-2,j}+e_{\ell-4,j-2}+\cdots ), & (\ell>j+2) \text{ or } (j\text{ even and } \ell=j+1),\\
                     -u(e_{\ell ,j+2}+e_{\ell +2,j+4}+\cdots ), &(\ell <j-1)\text{ or } ( j\text{ even and }\ell =j-1),\\
                     -u(e_{\ell ,j+2}+e_{\ell +2,j+4}+\cdots )+2tuh_{n-1}+t^2uE^-,&(j\text{ odd and } \ell =j+1),\\
                     -u(e_{\ell ,j+2}+e_{\ell +2,j+4}+\cdots ) +th_{n-1}-t^2uE^+,& (j\text{ odd and } \ell =j-1),\\
                     uh_j, & (\ell =j+2),
                      \end{cases}  \\
\check{r}:h_j^* \longmapsto  &\begin{cases}-ue_{j,j+2},& \hspace{1.95in}(j\neq n-1),\\
                             -tE^--2tuE^+,  &\hspace{1.95in}(j=n-1),\end{cases}\\
\check{r}^{-1}:e_{j\ell }    \longmapsto    &\begin{cases}  
 -2e_{n-1,n}^*-t^{-1}h_{n-1}^*-u^{-1}e_{n-3,n}^*,   &  \hspace{.86in} (j,\ell )=(n,n-1),\\
-u^{-1}h_j^*,  & \hspace{.86in} (\ell =j+2),\\
 u^{-1}e_{23}^*,     & \hspace{.86in} (j,\ell )=(1,2),\\
 2e_{n,n-1}^*-th_{n-1}^*-u^{-1}e_{n-2,n-1}^*,   &  \hspace{.86in} (j,\ell )=(n-1,n),\\
  -u^{-1}\left(e_{\ell -2,j}^*-e_{\ell ,j+2}^*\right),&\hspace{.86in} \text{otherwise},\end{cases}\\
\check{r}^{-1}:h_j             \longmapsto  &\begin{cases}u^{-1}e_{j,j+2}^*,  &\hspace{1.7in}(j\neq n-1),\\
                                                t^{-1}e_{n,n-1}^*+te_{n-1,n}^*, &\hspace{1.7in}(j=n-1).\end{cases}
\end{align*}
Put $\varphi = u^{-1}(e_{13}^*+e_{24}^*\cdots e_{n-2,n}^*)-te_{n,n-1}^*+2(e_{n-1,n-1}-e_{nn})^*\in\mathfrak{p}_{n-2,n}^*$. One can check that $\varphi [X,Y] = \left<X,\check{r}^{-1}(Y)\right>$ for all $X,Y\in\mathfrak{p}_{n-2,n}$.
\end{proof}

\section{Proof of Theorem \ref{the action}}\label{33}

As before, let $m$ and $n$ be relatively prime positive integers with $m<n$ and let ${\mathcal T}_{m,n}=({\mathcal S}_0,{\mathcal S}_1,\zeta)$ denote the unique BD-triple for $\mathfrak{sl}_n$ with ${\mathcal S}_1=\Pi-\{\alpha_m\}$. Let $\varphi:\mathfrak{sl}_n\to\mathfrak{sl}_n$ be the involutive Lie algebra automorphism defined by $e_{j\ell}\mapsto -e_{\ell^\prime,j^\prime}$ (where $t^\prime:=n+1-t$ for all $t\in\{1,...,n\}$). Furthermore, recall that the corresponding generalized Cremmer-Gervais $r$-matrix associated to ${\mathcal T}_{m,n}$ is $r_{m,n}=\alpha_{m,n}+\beta_{m,n}+\gamma_n$ where
\begin{align}
\alpha_{m,n}&:= 2\sum_{e_{j_1,j_2}\prec e_{j_3,j_4}} e_{j_1,j_2}\wedge e_{j_4,j_3},\\
\beta_{m,n}&:=\sum_{1\leq j<\ell\leq n}\left[-1+\frac{2}{n}[(j-\ell)m^{-1}(\text{mod }n)]\right]e_{jj}\wedge e_{\ell\ell},\\
\gamma_n &:= \sum_{1\leq j<\ell\leq n}e_{j\ell}\wedge e_{\ell j}.
\end{align}
Here the sum on $\alpha_{m,n}$ is over all appropriate quadruples of numbers $j_1,j_2,j_3,j_4$ with $e_{j_1,j_2}\prec e_{j_3,j_4}$.  The remainder of this section is devoted to giving a more precise description of such quadruples.  
Define first the sets
\begin{align}
{\mathcal S}_{m,n}(j_1,j_2) &:=\{s\in\{1,...,n\}\mid e_{j_1,s}\prec e_{j_1+j_2-s,j_2}\text{ for the BD-triple }{\mathcal T}_{m,n}\},\\
\overline{{\mathcal S}}_{m,n}(j_1,j_2) &:=\{s\in\{1,...,n\}\mid e_{j_1,s}\preceq e_{j_1+j_2-s,j_2}\text{ for the BD-triple }{\mathcal T}_{m,n}\}.
\end{align}

We readily compute $\alpha_{n-m,n}=(\varphi\otimes\varphi)\alpha_{m,n}$ and the action of $\alpha_{n-m,n}$ on $e_j\otimes e_\ell\in k^n\otimes k^n$ is given by
\begin{align}
\alpha_{n-m,n}(e_j\otimes e_\ell) &=\text{sgn}(\ell-j)e_\ell\otimes e_j+\sum_{s\in{\mathcal S}_{m,n}(j^\prime,\ell^\prime)}e_{s^\prime}\otimes e_{j+\ell-s^\prime}-\sum_{s\in{\mathcal S}_{m,n}(\ell^\prime,j^\prime)}e_{j+\ell-s^\prime}\otimes e_{s^\prime}\\
\nonumber&=\sum_{s\in\overline{{\mathcal S}}_{m,n}(j^\prime,\ell^\prime)}e_{s^\prime}\otimes e_{j+\ell-s^\prime}-\sum_{s\in\overline{{\mathcal S}}_{m,n}(\ell^\prime,j^\prime)}e_{j+\ell-s^\prime}\otimes e_{s^\prime}
\end{align}
Our goal now is to explicitly determine the elements of the sets $\overline{{\mathcal S}}_{m,n}(1,1),  \overline{{\mathcal S}}_{m,n}(1,2),...,   \overline{{\mathcal S}}_{m,n}(n,n)$. The running example we consider is the case when $n=31$ and $m=12$.  In Figure \ref{spinning wheel}, we've arranged the integers $1$ through $31$ in two concentric wheels which we view as having the ability to rotate independently around a central axis. In each wheel the numbers are arranged so that as one traverses counterclockwise, the numbers increase by $12$ modulo $31$. Spokes are placed after certain numbers, indicating when a reduction modulo $31$ occurs. The spokes partition the integers into \emph{strings}. Here, the strings are
\begin{align*}
&\{1,13,25\},\{6,18,30\},\{11,23\},\{4,16,28\},\{9,21\},\{2,14,26\},\\
&\{7,19,31\},\{12,24\},\{5,17,29\},\{10,22\},\{3,15,27\},\{8,20\}.
\end{align*}

\setlength{\unitlength}{2pt} 
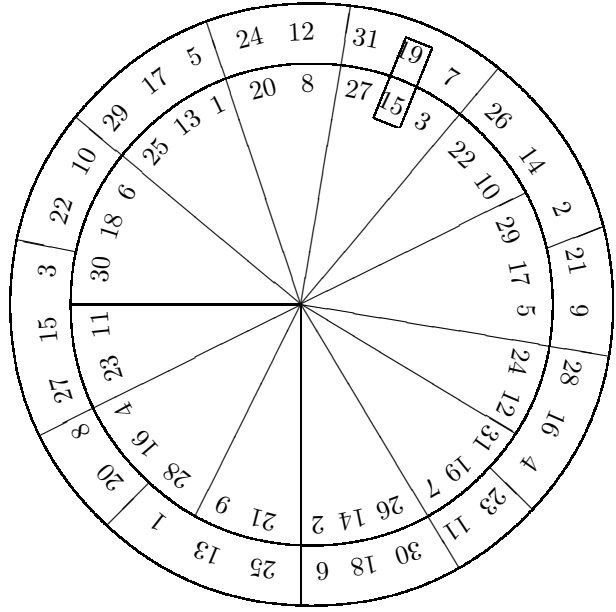
\begin{figure}[t]\centering
\caption{The Rotating Wheels with the pair $(15,19)$ aligned}
\begin{picture}(100,127)(0,-60)\label{spinning wheel}
\put(63.5, 34){\rotatebox{68}{\framebox{\phantom{xxxxx}}}}	
\put(89.18119765, 8.051940804){\rotatebox{ -78.38709677}{17}}
\put(86.75831246, 15.77423421){\rotatebox{-66.77419355}{29}}
\put(82.53053765, 23.55072861){\rotatebox{ -55.16129032}{10}}
\put(77.55867676, 28.99171149){\rotatebox{ -43.54838710}{22}}
\put(71.15856042, 33.94577030){\rotatebox{ -31.93548387}{3}}
\put(64.19221012, 37.81008528){\rotatebox{ -25.32258065}{15}}
\put(58.05711110, 39.53873297){\rotatebox{-11.709677419}{27}}
\put(49.97403326, 40.24866029){\rotatebox{ 2.903225806}{8}}
\put(39.97389873, 38.72308476){\rotatebox{14.51612903}{20}}
\put(32.38423396, 35.91218159){\rotatebox{26.12903226}{1}}
\put(25.51576070, 31.63102948){\rotatebox{ 37.74193548}{13}}
\put(19.64967510, 26.05489931){\rotatebox{ 49.35483871}{25}}
\put(15.02613536, 19.41207850){\rotatebox{60.96774194}{6}}
\put(11.83442974, 11.97452492){\rotatebox{72.58064516}{18}}
\put(10.20522706, 4.046732880){\rotatebox{ 84.19354839}{30}}
\put(10.20522706, -6.046732880){\rotatebox{ 95.80645161}{11}}
\put(11.83442974, -13.97452492){\rotatebox{ 107.4193548}{23}}
\put(14.52613536, -20.41207850){\rotatebox{119.0322581}{4}}
\put(17.64967510, -27.05489931){\rotatebox{130.6451613}{16}}
\put(23.51576070, -32.63102948){\rotatebox{142.2580645}{28}}
\put(33.38423396, -36.91218159){\rotatebox{ 153.8709677}{9}}
\put(39.97389873, -39.72308476){\rotatebox{ 165.4838710}{21}}
\put(51.97403326, -39.94866029){\rotatebox{ 181.0967742}{2}}
\put(57.05711110, -38.53873297){\rotatebox{191.7096774}{14}}
\put(63.89221012, -36.11008528){\rotatebox{200.3225806}{26}}
\put(72.85856042, -32.64577030){\rotatebox{209.9354839}{7}}
\put(76.75867676, -28.19171149){\rotatebox{ 219.5483871}{19}}
\put(82.33053765, -22.35072861){\rotatebox{235.1612903}{31}}
\put(86.75831246, -15.77423421){\rotatebox{246.7741935}{12}}
\put(89.18119765, -8.051940804){\rotatebox{258.3870968}{24}}
\put(91, 0){\rotatebox{ 270}{5}}
\put(99.97649706, 10.46492600){\rotatebox{-78.38709677}{21}}
\put(97.44789058, 18.61779276){\rotatebox{-66.77419355}{2}}
\put(91.03817206, 28.56341076){\rotatebox{-55.16129032}{14}}
\put(84.44834596, 36.23963936){\rotatebox{-43.54838710}{26}}
\put(76.44820052, 42.43221288){\rotatebox{-31.93548387}{7}}
\put(67.66526264, 47.28760660){\rotatebox{-23.32258065}{19}}
\put(59.57138888, 49.42341622){\rotatebox{-11.709677419}{31}}
\put(47.46754158, 49.93582536){\rotatebox{2.903225806}{12}}
\put(37.46737341, 48.40385595){\rotatebox{14.51612903}{24}}
\put(27.98029244, 44.89022699){\rotatebox{26.12903226}{5}}
\put(19.39470087, 39.53878684){\rotatebox{37.74193548}{17}}
\put(12.06209387, 32.56862414){\rotatebox{49.35483871}{29}}
\put(6.28266920, 24.26509813){\rotatebox{60.96774194}{10}}
\put(2.29303718, 14.96815616){\rotatebox{72.58064516}{22}}
\put(.25653383, 5.058416100){\rotatebox{ 84.19}{3}}
\put(.25653383, -7.058416100){\rotatebox{95.80}{15}}
\put(2.29303718, -17.96815616){\rotatebox{107.41}{27}}
\put(6.28266920, -24.26509813){\rotatebox{119.03}{8}}
\put(11.06209387, -33.56862414){\rotatebox{130.64}{20}}
\put(20.99470087, -40.53878684){\rotatebox{142.25}{1}}
\put(29.38029244, -46.39022699){\rotatebox{157.48}{13}}
\put(40.06737341, -48.70385595){\rotatebox{172.09}{25}}
\put(52.66754158, -48.73582536){\rotatebox{184.70}{6}}
\put(59.17138888, -47.32341622){\rotatebox{191.32}{18}}
\put(67.46526264, -44.68760660){\rotatebox{197.93}{30}}
\put(76.54820052, -39.53221288){\rotatebox{213.54}{11}}
\put(83.44834596, -34.33963936){\rotatebox{227.16}{23}}
\put(91.33817206, -28.16341076){\rotatebox{234.77}{4}}
\put(95.24789058, -19.51779276){\rotatebox{246.38}{16}}
\put(98.97649706, -10.06492600){\rotatebox{257}{28}}
\put(101, 0){\rotatebox{270}{9}}
\put(96.7,10.6){\line(5,2){10.6}}	
\put(7.6,10){\line(-5,1){11.2}}
\put(21.3,-33.7){\line(-1,-1){8}}
\put(85.8,-30.8){\line(1,-1){7.9}}
\put(50,0){\line(6,-1){58}}
\put(50,0){\line(2,1){42.5}}
\put(50,0){\line(5,6){37.3}}
\put(50,0){\line(1,6){9.4}}
\put(50,0){\line(-1,3){17.86}}
\put(50,0){\line(-6,5){42.55}}
\put(50,0){\line(-1,0){43.6}}
\put(50,0){\line(-2,-1){49.2}}
\put(50,0){\line(-1,-2){20}}
\put(50,0){\line(0,-1){57}}
\put(50,0){\line(3,-5){29.8}}
\put(50,0){\line(5,-3){40.7}}
\qbezier(97.6, 0)(97.6, 18.888)(84.2440, 32.2440)
\qbezier(84.2440, 32.2440)(70.888, 45.6)(52.0, 45.6)
\qbezier(52.0, 45.6)(33.112, 45.6)(19.7560, 32.2440)
\qbezier(19.7560, 32.2440)(6.4, 18.888)(6.4, 0)
\qbezier(6.4, 0)(6.4, -18.888)(19.7560, -32.2440)
\qbezier(19.7560, -32.2440)(33.112, -45.6)(52.0, -45.6)
\qbezier(52.0, -45.6)(70.888, -45.6)(84.2440, -32.2440)
\qbezier(84.2440, -32.2440)(97.6, -18.888)(97.6, 0)
\qbezier(109.0, 0)(109.0, 23.610)(92.3050, 40.3050)
\qbezier(92.3050, 40.3050)(75.610, 57.0)(52.0, 57.0)
\qbezier(52.0, 57.0)(28.390, 57.0)(11.6950, 40.3050)
\qbezier(11.6950, 40.3050)(-5.0, 23.610)(-5.0, 0)
\qbezier(-5.0, 0)(-5.0, -23.610)(11.6950, -40.3050)
\qbezier(11.6950, -40.3050)(28.390, -57.0)(52.0, -57.0)
\qbezier(52.0, -57.0)(75.610, -57.0)(92.3050, -40.3050)
\qbezier(92.3050, -40.3050)(109.0, -23.610)(109.0, 0)
\end{picture}
\end{figure}

Each string has a smallest integer, called the \emph{minimal element}. Here, the minimal elements of each string listed above are \[1,6,11,4,9,2,7,12,5,10,3,8\] respectively. For example, suppose $j=17$, $\ell=10$, and we wish to determine $\overline{{\mathcal S}}_{12,31}(17^\prime,10^\prime)$. Figure \ref{spinning wheel} indicates how $19\in\overline{{\mathcal S}}_{12,31}(17^\prime,10^\prime)$, for instance. First, we locate $j^\prime=15$ on the inner wheel and $\ell^\prime=22$ on the outer wheel.  As we traverse counterclockwise $8$ units from the aligned pair of numbers $(j^\prime,19)$, we see for instance that
\[e_{j^\prime,19}=e_{15,19}\prec e_{27,31}\prec \cdots \prec e_{6,10}\prec e_{18,22}=e_{18,\ell^\prime}.\]
Hence, $e_{j^\prime,19}\prec e_{18,\ell^\prime}$. In terms of the BD-triple ${\mathcal T}_{12,31}$ (recall Eqn. \ref{T_{m,n}}), this translates to
\[\widehat{\zeta}^{\, 8} (e_{e_{j^\prime}-e_{19}}) = e_{e_{18}-e_{\ell^\prime}}.\]

Notice however that $e_{j^\prime,19}\npreceq e_{11,j^\prime}$ because there does not exist $N\in\mathbb{Z}_{\geq 0}$ so that $\widehat{\zeta}^{\, N}(e_{e_{j^\prime}-e_{19}})=e_{e_{11}-e_{j^\prime}}$. This is indicated by the spokes on the two wheels not lining up correctly after traversing $9$ units counterclockwise from the pair $(j^\prime,19)$. However, there are other legitimate choices for $s$ with $e_{j^\prime ,s}\preceq e_{\ell^\prime+j^\prime-s,\ell^\prime}$. We can check all possibilities and see that $s=16,17,19,22$ are the only allowed values. However, we seek a more efficient algorithm and explicit formulae for finding all appropriate values of $s$ in $\overline{{\mathcal S}}_{m,n}(j^\prime,\ell^\prime)$. 

The first candidates we check are those in the same string as $\ell^\prime$. Thus, we test the integers of the form $\ell^\prime-Nm$ for $N=0,1,2,... $ The only condition that needs to be satisfied is $\ell^\prime -Nm> j^\prime$, or equivalently $N\leq\lfloor\frac{\ell^\prime - j^\prime-1}{m}\rfloor$. So we imagine first aligning $j^\prime$ on the inner wheel with $\ell^\prime$ on the outer wheel and then rotating the inner wheel clockwise so that $j^\prime$ aligns with $\ell^\prime-m$, then with $\ell^\prime - 2m$, etc. This will give us $1+\lfloor\frac{\ell^\prime - j^\prime-1}{m}\rfloor$ valid candidates (provided $\ell<j$). In our example, we check $s=22$ and $s=10$, but since $10<j^\prime$, this implies $10$ is not valid. Thus, so far this only gives us $s=22\in\overline{{\mathcal S}}_{12,31}(15,22)$.

To find the other $s\in\overline{{\mathcal S}}_{12,31}(15,22)$, we continue rotating the inner wheel clockwise so that the minimal element in the string directly counterclockwise to the string containing $j^\prime$ lines up with the minimal element of the string containing $\ell^\prime$. We denote these minimal elements by ${\mathcal A}(j^\prime)$ and ${\mathcal B}(\ell^\prime)$ respectively. They can be calculated according to the formulas
\begin{align}
{\mathcal A}(j^\prime)&:= m - \left[(n-j^\prime)(\text{mod }m)\right],\\
{\mathcal B} (\ell^\prime )&:=\ell^\prime-m\left\lfloor\frac{\ell^\prime-1}{m}\right\rfloor=1+\left[(\ell^\prime-1)(\text{mod }m)\right].
\end{align}
In our example, the string adjacent to the one containing $j^\prime$ is $\{8,20\}$, so we align $8$ with $10$ (see Figure \ref{spinning wheel 2}). 

This will reduce the problem to a smaller pair a relatively prime positive integers, $i_1$ and $i_2$, with $i_2<i_1$. Notice that we can naturally arrange the minimal elements on a pair of smaller rotating wheels, as in Figure \ref{spinning wheel 2}. As we circumnavigate the larger wheels counterclockwise, the minimal elements of each string are $1,6,11,4,9,2,7,12,5,10,3$ and $8$. The minimal elements are increasing by $-n(\text{mod }m)$. The spokes in the smaller wheel of minimal elements also tell us which minimal elements belong to large or small strings: the minimal elements of small strings have a spoke next to them in the counterclockwise direction. In particular, the spokes in the wheel of minimal elements tell us how the spokes align in the larger wheel.
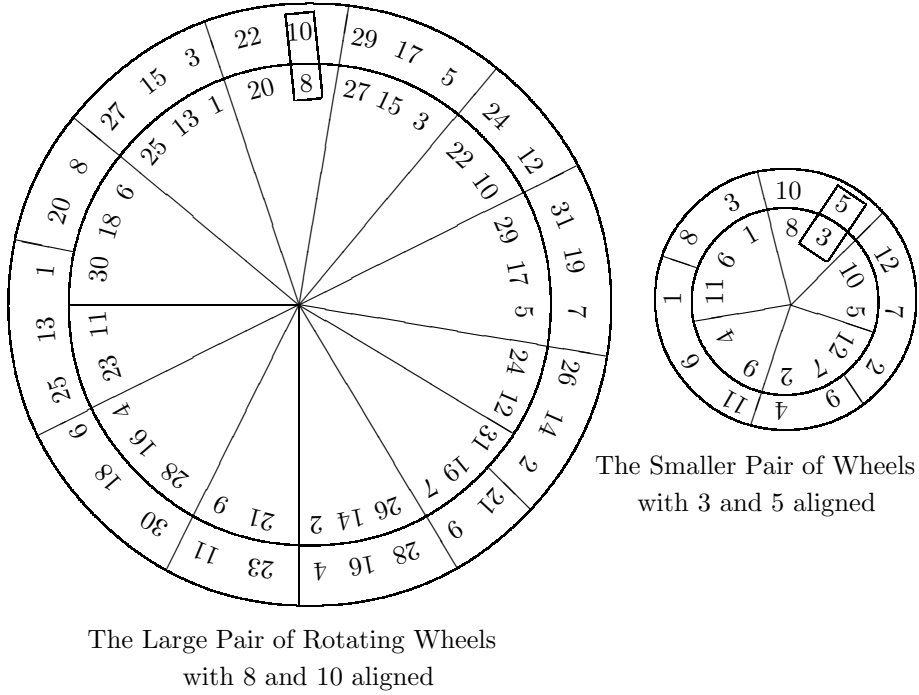
\begin{figure}[t]\centering
\caption{Reducing to a smaller pair of rotating wheels}
\begin{picture}(165,147)(0,-80)\label{spinning wheel 2}
\put(47.2, 39){\rotatebox{96}{\framebox{\phantom{xxxxx}}}}	
\put(89.18119765, 8.051940804){\rotatebox{ -78.38709677}{17}}
\put(86.75831246, 15.77423421){\rotatebox{-66.77419355}{29}}
\put(82.53053765, 23.55072861){\rotatebox{ -55.16129032}{10}}
\put(77.55867676, 28.99171149){\rotatebox{ -43.54838710}{22}}
\put(71.15856042, 33.94577030){\rotatebox{ -31.93548387}{3}}
\put(64.19221012, 37.81008528){\rotatebox{ -25.32258065}{15}}
\put(58.05711110, 39.53873297){\rotatebox{-11.709677419}{27}}
\put(49.97403326, 40.24866029){\rotatebox{ 2.903225806}{8}}
\put(39.97389873, 38.72308476){\rotatebox{14.51612903}{20}}
\put(32.38423396, 35.91218159){\rotatebox{26.12903226}{1}}
\put(25.51576070, 31.63102948){\rotatebox{ 37.74193548}{13}}
\put(19.64967510, 26.05489931){\rotatebox{ 49.35483871}{25}}
\put(15.02613536, 19.41207850){\rotatebox{60.96774194}{6}}
\put(11.83442974, 11.97452492){\rotatebox{72.58064516}{18}}
\put(10.20522706, 4.046732880){\rotatebox{ 84.19354839}{30}}
\put(10.20522706, -6.046732880){\rotatebox{ 95.80645161}{11}}
\put(11.83442974, -13.97452492){\rotatebox{ 107.4193548}{23}}
\put(14.52613536, -20.41207850){\rotatebox{119.0322581}{4}}
\put(17.64967510, -27.05489931){\rotatebox{130.6451613}{16}}
\put(23.51576070, -32.63102948){\rotatebox{142.2580645}{28}}
\put(33.38423396, -36.91218159){\rotatebox{ 153.8709677}{9}}
\put(39.97389873, -39.72308476){\rotatebox{ 165.4838710}{21}}
\put(51.97403326, -39.94866029){\rotatebox{ 181.0967742}{2}}
\put(57.05711110, -38.53873297){\rotatebox{191.7096774}{14}}
\put(63.89221012, -36.11008528){\rotatebox{200.3225806}{26}}
\put(72.85856042, -32.64577030){\rotatebox{209.9354839}{7}}
\put(76.75867676, -28.19171149){\rotatebox{ 219.5483871}{19}}
\put(82.33053765, -22.35072861){\rotatebox{235.1612903}{31}}
\put(86.75831246, -15.77423421){\rotatebox{246.7741935}{12}}
\put(89.18119765, -8.051940804){\rotatebox{258.3870968}{24}}
\put(91, 0){\rotatebox{ 270}{5}}
\put(99.97649706, 10.46492600){\rotatebox{-78.38709677}{19}}
\put(97.44789058, 18.61779276){\rotatebox{-66.77419355}{31}}
\put(91.03817206, 28.56341076){\rotatebox{-55.16129032}{12}}
\put(84.44834596, 36.23963936){\rotatebox{-43.54838710}{24}}
\put(76.44820052, 42.43221288){\rotatebox{-31.93548387}{5}}
\put(67.66526264, 47.28760660){\rotatebox{-23.32258065}{17}}
\put(59.57138888, 49.42341622){\rotatebox{-11.709677419}{29}}
\put(47.46754158, 49.93582536){\rotatebox{2.903225806}{10}}
\put(37.46737341, 48.40385595){\rotatebox{14.51612903}{22}}
\put(27.98029244, 44.89022699){\rotatebox{26.12903226}{3}}
\put(19.39470087, 39.53878684){\rotatebox{37.74193548}{15}}
\put(12.06209387, 32.56862414){\rotatebox{49.35483871}{27}}
\put(6.28266920, 24.26509813){\rotatebox{60.96774194}{8}}
\put(2.29303718, 14.96815616){\rotatebox{72.58064516}{20}}
\put(.25653383, 5.058416100){\rotatebox{ 84.19}{1}}
\put(.25653383, -7.058416100){\rotatebox{95.80}{13}}
\put(2.29303718, -17.96815616){\rotatebox{107.41}{25}}
\put(6.28266920, -24.26509813){\rotatebox{119.03}{6}}
\put(11.06209387, -33.56862414){\rotatebox{130.64}{18}}
\put(19.59470087, -41.53878684){\rotatebox{147.25}{30}}
\put(29.38029244, -46.39022699){\rotatebox{157.48}{11}}
\put(40.06737341, -48.70385595){\rotatebox{172.09}{23}}
\put(52.66754158, -48.73582536){\rotatebox{184.70}{4}}
\put(59.17138888, -47.32341622){\rotatebox{191.32}{16}}
\put(67.46526264, -44.68760660){\rotatebox{197.93}{28}}
\put(77.54820052, -40.53221288){\rotatebox{213.54}{9}}
\put(83.44834596, -34.33963936){\rotatebox{227.16}{21}}
\put(91.33817206, -28.16341076){\rotatebox{234.77}{2}}
\put(95.24789058, -19.51779276){\rotatebox{246.38}{14}}
\put(98.97649706, -10.06492600){\rotatebox{257}{26}}
\put(101, 0){\rotatebox{270}{7}}
\put(7.6,10){\line(-5,1){11.2}}
\put(85.8,-30.8){\line(1,-1){7.9}}
\put(144.3, 9){\rotatebox{56}{\framebox{\phantom{1111}}}}	
\put(50,0){\line(6,-1){58}}
\put(50,0){\line(2,1){52.5}}
\put(50,0){\line(5,6){37.3}}
\put(50,0){\line(1,6){9.4}}
\put(50,0){\line(-1,3){17.86}}
\put(50,0){\line(-6,5){42.55}}
\put(50,0){\line(-1,0){43.6}}
\put(50,0){\line(-2,-1){49.2}}
\put(50,0){\line(-1,-2){25}}
\put(50,0){\line(0,-1){57}}
\put(50,0){\line(3,-5){29.8}}
\put(50,0){\line(5,-3){40.7}}
\put(158.5, 10){\rotatebox{ -60}{12}}
\put(151, 18.32050808){\rotatebox{-30}{5}}
\put(140, 20){\rotatebox{ 0}{10}}
\put(130, 17.32050808){\rotatebox{30}{3}}
\put(121.6794919, 11){\rotatebox{ 60}{8}}
\put(119, 0){\rotatebox{ 90}{1}}
\put(122, -11){\rotatebox{ 120}{6}}
\put(129.4, -18.32050808){\rotatebox{150}{11}}
\put(140, -18.4){\rotatebox{ 180}{4}}
\put(149, -15.82050808){\rotatebox{ 210}{9}}
\put(157, -10){\rotatebox{ 240}{2}}
\put(161, 0){\rotatebox{ 270}{7}}	
\put(151.9583302, 7){\rotatebox{-60}{10}}
\put(147.4, 12){\rotatebox{-30}{3}}
\put(142, 13){\rotatebox{0.}{8}}
\put(133.5000000, 11.25833025){\rotatebox{ 30.}{1}}
\put(128.7416698, 6.5){\rotatebox{60.}{6}}
\put(127., 0){\rotatebox{90.}{11}}
\put(128.7416698, -6.5){\rotatebox{ 120.}{4}}
\put(133.5000000, -11.25833025){\rotatebox{ 150.}{9}}
\put(141, -12){\rotatebox{180.}{2}}
\put(146.5000000, -10){\rotatebox{ 210.}{7}}
\put(150.0583302, -4.6){\rotatebox{ 240.}{12}}
\put(153.7, 0){\rotatebox{ 270.}{5}}
\qbezier(166.7, 1.0)(166.7, 11.231)(159.4655, 18.4655)
\qbezier(159.4655, 18.4655)(152.231, 25.7)(142.0, 25.7)
\qbezier(142.0, 25.7)(131.769, 25.7)(124.5345, 18.4655)
\qbezier(124.5345, 18.4655)(117.3, 11.231)(117.3, 1.0)
\qbezier(117.3, 1.0)(117.3, -9.231)(124.5345, -16.4655)
\qbezier(124.5345, -16.4655)(131.769, -23.7)(142.0, -23.7)
\qbezier(142.0, -23.7)(152.231, -23.7)(159.4655, -16.4655)
\qbezier(159.4655, -16.4655)(166.7, -9.231)(166.7, 1.0)
\qbezier(159.57, .40)(159.57, 7.9191)(154.39455, 13.09455)
\qbezier(154.39455, 13.09455)(149.2191, 18.27)(141.90, 18.27)
\qbezier(141.90, 18.27)(134.5809, 18.27)(129.40545, 13.09455)
\qbezier(129.40545, 13.09455)(124.23, 7.9191)(124.23, .40)
\qbezier(124.23, .40)(124.23, -6.7191)(129.40545, -11.89455)
\qbezier(129.40545, -11.89455)(134.5809, -17.07)(141.90, -17.07)
\qbezier(141.90, -17.07)(149.2191, -17.07)(154.39455, -11.89455)
\qbezier(154.39455, -11.89455)(159.57, -6.7191)(159.57, .40)
\put(143,0){\line(3,-1){15.5}}
\put(143,0){\line(1,1){17.5}}	
\put(143,0){\line(-1,4){6.25}}	
\put(143,0){\line(-6,-1){18}}
\put(143,0){\line(-1,-3){7.5}}
\put(125.4,7){\line(-3,1){6.65}}
\put(153,-13.2){\line(2,-3){3.82}}
\put(106,-32){The Smaller Pair of Wheels}
\put(114,-39){with $3$ and $5$ aligned}
\put(10,-65){The Large Pair of Rotating Wheels}
\put(28,-72){with $8$ and $10$ aligned}
\qbezier(97.6, 0)(97.6, 18.888)(84.2440, 32.2440)
\qbezier(84.2440, 32.2440)(70.888, 45.6)(52.0, 45.6)
\qbezier(52.0, 45.6)(33.112, 45.6)(19.7560, 32.2440)
\qbezier(19.7560, 32.2440)(6.4, 18.888)(6.4, 0)
\qbezier(6.4, 0)(6.4, -18.888)(19.7560, -32.2440)
\qbezier(19.7560, -32.2440)(33.112, -45.6)(52.0, -45.6)
\qbezier(52.0, -45.6)(70.888, -45.6)(84.2440, -32.2440)
\qbezier(84.2440, -32.2440)(97.6, -18.888)(97.6, 0)
\qbezier(109.0, 0)(109.0, 23.610)(92.3050, 40.3050)
\qbezier(92.3050, 40.3050)(75.610, 57.0)(52.0, 57.0)
\qbezier(52.0, 57.0)(28.390, 57.0)(11.6950, 40.3050)
\qbezier(11.6950, 40.3050)(-5.0, 23.610)(-5.0, 0)
\qbezier(-5.0, 0)(-5.0, -23.610)(11.6950, -40.3050)
\qbezier(11.6950, -40.3050)(28.390, -57.0)(52.0, -57.0)
\qbezier(52.0, -57.0)(75.610, -57.0)(92.3050, -40.3050)
\qbezier(92.3050, -40.3050)(109.0, -23.610)(109.0, 0)
\end{picture}
\end{figure}

Next, we define integers
\begin{align}
{\mathcal C}_1 (\ell^\prime)&:=m+1-{\mathcal B}(\ell^\prime)=m-\left[(\ell^\prime -1)(\text{mod }m)\right]\\
{\mathcal D}_1(j^\prime)&:=m+1-{\mathcal A}(j^\prime) =1+\left[(n-j^\prime)(\text{mod }m)\right]
\end{align}
and determine all values $\widetilde{s}\in\overline{{\mathcal S}}_{-n(\text{mod }m),m}({\mathcal C}_1 (\ell^\prime),{\mathcal D}_1(j^\prime))$, i.e. reduce to the smaller pair of wheels.  Continuing with our example, we seek all $\widetilde{s}\in\overline{{\mathcal S}}_{5,12}(3,5)$. We get $\overline{{\mathcal S}}_{5,12}(3,5)=\{4,5,7\}$. However, since  $e_{3,4}\preceq e_{4,5}$, $e_{3,5}\preceq e_{3,5}$, and $e_{3,7}\preceq e_{1,5}$ hold in the ${\mathcal T}_{5,12}$-case, this will imply  $e_{15,16}\preceq e_{21,22}$, $e_{15,17}\preceq e_{20,22}$, and $e_{15,19}\preceq e_{18,22}$ for the ${\mathcal T}_{12,31}$-case.

In general, the approach we take is to check all candidates in the same string containing $\ell^\prime$, then reduce the problem to a smaller case. Thus,
\begin{equation}
\sum_{s\in\overline{{\mathcal S}}_{m,n}(j^\prime,\ell^\prime)}e_s =\underbrace{\sum_{N=0}^{\lfloor\frac{\ell^\prime -j^\prime -1}{m}\rfloor} e_{\ell^\prime - Nm}}_{\text{in the same string as $\ell^\prime$}}+\underbrace{\sum_{s\in\overline{{\mathcal S}}_{-n(\text{mod }m),m}({\mathcal C}_1 (\ell^\prime),{\mathcal D}_1(j^\prime))}e_{j^\prime+(s-{\mathcal C}_1 (\ell^\prime))}}_{\text{everything else}}.
\end{equation}
We define a sequence of integers $i_0,i_1,i_2,...$ recursively by $i_0:=n$, $i_1:=m$, and $i_t:=-i_{t-2} (\text{mod }i_{t-1})$ for $t>1$. Eventually the sequence will reach $1$ (e.g. $31,12,5,3,1$). Let $L$ denote the smallest number so that $i_L=1$. With this setup, we recursively get
\begin{align}\label{56}
\displaystyle{\sum_{s\in\overline{{\mathcal S}}_{m,n}(j^\prime,\ell^\prime)}e_s} &=\sum_{N=0}^{\lfloor\frac{{\mathcal D}_0(\ell^\prime)-{\mathcal C}_0(j^\prime)-1}{i_1}\rfloor} e_{j^\prime+{\mathcal D}_0(\ell^\prime)-{\mathcal C}_0(j^\prime)- Ni_1}+ \sum_{N=0}^{\lfloor\frac{{\mathcal D}_1(j^\prime)-{\mathcal C}_1(\ell^\prime)-1}{i_2}\rfloor} e_{j^\prime+{\mathcal D}_1(j^\prime)-{\mathcal C}_1(\ell^\prime)- Ni_2}\\
\nonumber&\phantom{=}+ \sum_{N=0}^{\lfloor\frac{{\mathcal D}_2(\ell^\prime)-{\mathcal C}_2(j^\prime)-1}{i_3}\rfloor} e_{j^\prime+{\mathcal D}_2(\ell^\prime)-{\mathcal C}_2(j^\prime)- Ni_3}+ \sum_{N=0}^{\lfloor\frac{{\mathcal D}_3(j^\prime)-{\mathcal C}_3(\ell^\prime)-1}{i_4}\rfloor} e_{j^\prime+{\mathcal D}_3(j^\prime)-{\mathcal C}_3(\ell^\prime)- Ni_4}+\cdots
\end{align}
where
\begin{align}
{\mathcal C}_t(\ell)&:=\begin{cases}i_t-\left((i_0-\ell)(\text{mod }i_0)(\text{mod }i_1)\cdots (\text{mod }i_t)\right),&(t \text{ is even}),\\
i_t-\left((\ell-1)(\text{mod }i_0)(\text{mod }i_1)\cdots (\text{mod }i_t)\right),&(t\text{ is odd}),\end{cases}\\
{\mathcal D}_t(\ell)&:=\begin{cases}1+\left((\ell-1)(\text{mod }i_0)(\text{mod }i_1)\cdots (\text{mod }i_t)\right),&(t \text{ is even}),\\
1+\left((i_0-\ell)(\text{mod }i_0)(\text{mod }i_1)\cdots (\text{mod }i_t)\right),&(t\text{ is odd}),\end{cases}
\end{align}
for all $0\leq t\leq L-1$ and $1\leq\ell\leq n$. After simplifying Eqn. \ref{56}, we get
\begin{equation}
\sum_{s\in\overline{{\mathcal S}}_{m,n}(j^\prime,\ell^\prime)}e_s=\sum_{t=0}^{L-1}\sum_{N=0}^{\lfloor\frac{{\mathcal J}_t(j,\ell)-1}{i_{t+1}}\rfloor}e_{j^\prime +{\mathcal J}_t(j,\ell)-Ni_{t+1}}
\end{equation}
where
\begin{align}
{\mathcal J}_t(j,\ell)&:=\begin{cases}{\mathcal D}_t(\ell^\prime)-{\mathcal C}_t(j^\prime),&(t\text{ is even}),\\ {\mathcal D}_t(j^\prime)-{\mathcal C}_t(\ell^\prime),&(t\text{ is odd}),\end{cases}\\
\nonumber&=1-i_t+\Big[(n-\ell)(\text{mod }i_0)(\text{mod }i_1)\cdots(\text{mod }i_t)\Big]+\Big[(j-1)(\text{mod }i_0)(\text{mod }i_1)\cdots(\text{mod }i_t)\Big].
\end{align}
Therefore,
\begin{align}
\label{a1234}\alpha_{n-m,n}(e_j\otimes e_\ell)&= \text{sgn}(\ell-j)e_\ell\otimes e_j +\sum_{t=0}^{L-1}\sum_{N=0}^{\left\lfloor\frac{{\mathcal J}_t(j,\ell)-1}{i_{t+1}}\right\rfloor}e_{j-{\mathcal J}_t(j,\ell)+Ni_{t+1}}\otimes e_{\ell+{\mathcal J}_t(j,\ell)-Ni_{t+1}}\\
\nonumber&\phantom{=}-\sum_{t=0}^{L-1}\sum_{N=0}^{\lfloor\frac{{\mathcal J}_t(\ell,j)-1}{i_{t+1}}\rfloor}e_{j +{\mathcal J}_t(\ell,j)-Ni_{t+1}}\otimes e_{\ell-{\mathcal J}_t(\ell,j)+Ni_{t+1}}.
\end{align}

Recall that $\displaystyle{\beta_{m,n} = \sum_{1\leq j<\ell\leq n}\left[-1+\frac{2}{n}[(j-\ell)m^{-1}(\text{mod }n)]\right]e_{jj}\wedge e_{\ell\ell}}$ and $\displaystyle{\gamma_n = \sum_{1\leq j<\ell\leq n}e_{j\ell}\wedge e_{\ell j}}$. Applying the automorphism $\varphi\otimes\varphi$ yields $(\varphi\otimes\varphi)\beta_{m,n}=-\beta_{m,n}=\beta_{n-m,n}$ and $(\varphi\otimes\varphi)\gamma_n = \gamma_n$. Thus,
\begin{align}
\label{b1234}\beta_{n-m,n}(e_j\otimes e_\ell)&=\Bigg(\frac{1}{2}-\frac{1}{n}\Big[((j-\ell)m^{-1})(\text{mod }n)\Big]-\frac{1}{2}\delta_{j\ell}\Bigg)e_j\otimes e_\ell,\\
\label{c1234}\gamma_n(e_j\otimes e_\ell) &=\frac{1}{2}\text{sgn}(j-\ell)e_\ell\otimes e_j.
\end{align}

For $j\in\{1,...,n\}$,  let $\psi_j$ denote the unique integer in $\{1,..,n\}$ satisfying $j =m\psi_j (\text{mod }n)$. With this notation, the action of $r_{n-m,n}$ is 
\begin{align}
r_{n-m,n}(e_j\otimes e_\ell)&=\sum_{t=0}^{L-1}\sum_{N=0}^{\lfloor\frac{{\mathcal J}_t(j,\ell)-1}{i_{t+1}}\rfloor}e_{j-{\mathcal J}_t(j,\ell)+Ni_{t+1}}\otimes e_{\ell +{\mathcal J}_t(j,\ell)-Ni_{t+1}}\\
\nonumber&\phantom{\mapsto}-\sum_{t=0}^{L-1}\sum_{N=0}^{\lfloor\frac{{\mathcal J}_t(\ell,j)-1}{i_{t+1}}\rfloor}e_{j +{\mathcal J}_t(\ell,j)-Ni_{t+1}}\otimes e_{\ell-{\mathcal J}_t(\ell,j)+Ni_{t+1}}\\
\nonumber&\phantom{\mapsto}+\Bigg[\frac{1}{2}\text{sgn}(\psi_j-\psi_\ell)-\frac{1}{n}(\psi_j-\psi_\ell)\Bigg]e_j\otimes e_\ell-\frac{1}{2}\text{sgn}(j-\ell)e_\ell\otimes e_j.
\end{align}

\end{document}